\documentclass[a4paper]{amsproc}
\usepackage{etoolbox,fontspec}
\makeatletter
\newdimen\markdownclass@dateskip
\let\markdownclass@dateprefix\@empty
\makeatother
\input{tex/preamble_precompiled.tex}

\makeatletter
\RequirePackage{verified-badges}
\DeclareFormalizationRoot[rocq]{rocq}{\detokenize{https://github.com/TurtlePU/pts-rk/blob/d22053fa22d6070c8a14af4da679e439dd012825/theories/}}
\DeclareFormalizationRoot[rocq]{rocq-ai}{\detokenize{https://github.com/TurtlePU/pts-rk/blob/8e3a7b67b571f6ab3fb7cc67e9dc7c0a214351ce/theories/}}
\title{A sound pure type system with ranks as sorts and Type-in-Type}
\author[ ]{Pavel Sokolov\raisebox{-0.45ex}{\scriptsize\orcidlink{0009-0001-6922-2870}}\kern-0.4em\textsuperscript{1} ‹\protect\url{ppsokolov@hse.ru}›}
\author[ ]{Alexander Kuklev\raisebox{-0.45ex}{\scriptsize\orcidlink{0009-0004-1019-4205}}\kern-0.4em\textsuperscript{2} ‹\protect\url{a@kuklev.com}›}
\affil[1]{\small HSE University, Laboratory of Theoretical Computer Science}
\affil[2]{\small Radboud Univ. Nijmegen, Department of Software Science}
\ifdefined\subjclass\subjclass[2020]{03B38, 03D15, 03B70, 03B40, 68N18}\fi
\date{\markdownclass@dateprefix October 16, 2026}

\makeatother
\begin{document}
\maketitle
\begingroup
\catcode`\%=12\relax
\catcode`\#=12\relax
\markdownRendererDocumentBegin
\begin{abstract}We present a dependently typed pure type system\markdownRendererSoftLineBreak
{}that contains a circular axiom \markdownRendererInlineMath{s : s} for a\markdownRendererSoftLineBreak
{}certain sort \markdownRendererInlineMath{s} and is nonetheless strongly\markdownRendererSoftLineBreak
{}normalizing.\markdownRendererSoftLineBreak
{}Girard's paradox is avoided because our sorts are\markdownRendererSoftLineBreak
{}not closed under a \markdownRendererInlineMath{Π}-former:\markdownRendererSoftLineBreak
{}function types are placed in sorts according to\markdownRendererSoftLineBreak
{}their \markdownRendererEmphasis{rank}.\markdownRendererSoftLineBreak
{}Turns out that this system has a unique set of\markdownRendererSoftLineBreak
{}characteristics: it has similar normalization\markdownRendererSoftLineBreak
{}behaviour as STLC (so complexity of normalization\markdownRendererSoftLineBreak
{}is \textsf{TOWER}-complete) while at the same time\markdownRendererSoftLineBreak
{}allowing non-uniform representation of arbitrary\markdownRendererSoftLineBreak
{}elementary recursive functions; furthermore,\markdownRendererSoftLineBreak
{}it has a nontrivial set-theoretical model in\markdownRendererSoftLineBreak
{}ZF+GCH and allows to carry out nontrivial proof\markdownRendererSoftLineBreak
{}developments.\markdownRendererSoftLineBreak
{}Base normalization proof is surprisingly simple\markdownRendererSoftLineBreak
{}(modulo general metatheoretical properties of pure\markdownRendererSoftLineBreak
{}type systems) and has been fully formalized in\markdownRendererSoftLineBreak
{}vanilla Rocq (formerly Coq) without additional\markdownRendererSoftLineBreak
{}axioms.\end{abstract}\markdownRendererInterblockSeparator
{}\markdownRendererSectionBegin
\markdownRendererHeadingOne{Introduction}\markdownRendererInterblockSeparator
{}Pure type systems (PTS) are a family of minimalistic\markdownRendererSoftLineBreak
{}typed \markdownRendererInlineMath{λ}-calculi introduced independently by\markdownRendererSoftLineBreak
{}Berardi \markdownRendererCite{1}+{}{}{berardi1988towards} and Terlouw \markdownRendererCite{1}+{}{}{terlouw1989gstt}.\markdownRendererSoftLineBreak
{}Their abstract grammar includes only variables\markdownRendererSoftLineBreak
{}\markdownRendererInlineMath{x ∈ \mathsf{Var}}, abstraction \markdownRendererInlineMath{λ(x : T).u},\markdownRendererSoftLineBreak
{}application \markdownRendererInlineMath{u\,t}, dependent function types\markdownRendererSoftLineBreak
{}\markdownRendererInlineMath{Π(x : T).U} (where \markdownRendererInlineMath{U} might depend on \markdownRendererInlineMath{x}) and\markdownRendererSoftLineBreak
{}\markdownRendererEmphasis{sorts} \markdownRendererInlineMath{s ∈ \mathcal{S}} that act as “types of\markdownRendererSoftLineBreak
{}types”.\markdownRendererInterblockSeparator
{}\begin{figure}[ht]\begin{bnf}
  $x$ : \textsf{Variable} :in: $X$;;
  $s$ : \textsf{Sort} :in: $S$;;
  $T$, $U$, $t$, $u$ : \textsf{Term} ::=
  | $x$ : variable
  | $s$ : sort as type
  | $Π (x \colon T).\,U$
      : type of dependent functions
  | $λ (x \colon T).\,u$ : anonymous function
  | $u\,t$ : function application
  ;;
\end{bnf}\caption{Grammar of pure type systems}\end{figure}\markdownRendererInterblockSeparator
{}Common syntax conventions to make it more\markdownRendererSoftLineBreak
{}lightweight are the following:\markdownRendererSoftLineBreak
{}application associates to the left,\markdownRendererSoftLineBreak
{}so \markdownRendererInlineMath{a\,b\,c} is understood as \markdownRendererInlineMath{(a\,b)\,c};\markdownRendererSoftLineBreak
{}in a sequence of abstractions,\markdownRendererSoftLineBreak
{}all \markdownRendererInlineMath{λ} symbols except the first can be omitted;\markdownRendererSoftLineBreak
{}\markdownRendererInlineMath{T → U} is an alias for \markdownRendererInlineMath{Π (x : T).U} (where \markdownRendererInlineMath{U}\markdownRendererSoftLineBreak
{}does not depend on \markdownRendererInlineMath{x}); \markdownRendererInlineMath{( → )} is\markdownRendererSoftLineBreak
{}right-associative, so \markdownRendererInlineMath{T → U → V} is understood as\markdownRendererSoftLineBreak
{}\markdownRendererInlineMath{T → (U → V)}.\markdownRendererSoftLineBreak
{}In addition, even though abstractions are always\markdownRendererSoftLineBreak
{}annotated with a type of an argument,\markdownRendererSoftLineBreak
{}we are going to omit it on paper whenever they are\markdownRendererSoftLineBreak
{}clear from the context.\markdownRendererSoftLineBreak
{}Also note that there is no syntactical distinction\markdownRendererSoftLineBreak
{}between terms and types — they are defined by a\markdownRendererSoftLineBreak
{}single grammar; an expression can act both as a\markdownRendererSoftLineBreak
{}term and as a type depending on a side where it\markdownRendererSoftLineBreak
{}stands next to the colon in the typing judgement\markdownRendererSoftLineBreak
{}\markdownRendererInlineMath{Γ ⊢ \_ : \_}.\markdownRendererParagraphSeparator
{}A single PTS is determined by two relations:\markdownRendererInterblockSeparator
{}\markdownRendererUlBegin
\markdownRendererUlItem \markdownRendererEmphasis{axiom} relation\markdownRendererSoftLineBreak
{}\markdownRendererInlineMath{\mathsf{Ax} ⊆ \mathcal{S} × \mathcal{S}}\markdownRendererSoftLineBreak
{}determines “to which sort(s) a given sort\markdownRendererSoftLineBreak
{}belongs as an element”; instead of\markdownRendererSoftLineBreak
{}\markdownRendererInlineMath{(s, s') ∈ \mathsf{Ax}} it is customary to write\markdownRendererSoftLineBreak
{}simply \markdownRendererInlineMath{s : s'}.\markdownRendererUlItemEnd 
\markdownRendererUlItem \markdownRendererEmphasis{rule} relation\markdownRendererSoftLineBreak
{}\markdownRendererInlineMath{\mathsf{Rule} ⊆ \mathcal{S}^2 × \mathcal{S}}\markdownRendererSoftLineBreak
{}places types of functions \markdownRendererInlineMath{Π (x : T).U} into\markdownRendererSoftLineBreak
{}sorts accounting for the sorts where \markdownRendererInlineMath{T} and \markdownRendererInlineMath{U}\markdownRendererSoftLineBreak
{}are situated.\markdownRendererUlItemEnd 
\markdownRendererUlEnd \markdownRendererInterblockSeparator
{}\begin{figure}[h]\begin{mathpar}
  \inferrule[Ax]{ }{Γ ⊢ s : s'}
      (s, s') ∈ \mathsf{Ax}
  \and
  \inferrule[Rule]
    {Γ ⊢ T : s_T \\ Γ, x : T ⊢ U : s_U}
    {Γ ⊢ Π (x : T).\, U : s}
        (s_T, s_U, s) ∈ \mathsf{Rule}
  \and
  \inferrule[Var]{ }{Γ ⊢ x : T}
      x : T ∈ Γ
  \and
  \inferrule[Abs]
    {Γ ⊢ T : s_T \\ Γ, x : T ⊢ u : U}
    {Γ ⊢ λ (x : T). u : Π(x : T). U}
  \and
  \inferrule[App]
    {Γ ⊢ u : Π(x : T). U \\ Γ ⊢ t : T}
    {Γ ⊢ u\,t : U[t/x]}
  \and
  \inferrule[Conv]
    {Γ ⊢ t : T \\ Γ ⊢ U : s \\ T =_β U}
    {Γ ⊢ t : U}
\end{mathpar}\caption{Typing rules of pure type systems}\label{typing}\end{figure}\markdownRendererInterblockSeparator
{}Systems from Barendregt's lambda cube \markdownRendererCite{2}+{}{}{barendregt1991introduction}+{}{}{barendregt1992lambda}\markdownRendererSoftLineBreak
{}fit naturally into the framework: for example,\markdownRendererSoftLineBreak
{}simply typed lambda calculus (STLC) \markdownRendererCite{1}+{}{}{church1940formulation}\markdownRendererSoftLineBreak
{}can be viewed as a pure type system with a single\markdownRendererSoftLineBreak
{}sort \markdownRendererInlineMath{∗}, no axioms and a single rule \markdownRendererInlineMath{(∗, ∗, ∗)}.\markdownRendererSoftLineBreak
{}In general, however, pure type systems are\markdownRendererSoftLineBreak
{}dependently typed since \markdownRendererInlineMath{Π} allows the codomain\markdownRendererSoftLineBreak
{}type to depend on the value of an argument —\markdownRendererSoftLineBreak
{}Coquand's Calculus of Constructions (CoC) \markdownRendererCite{1}+{}{}{coquand1988calculus}\markdownRendererSoftLineBreak
{}sits comfortably at the top of lambda cube and is\markdownRendererSoftLineBreak
{}modelled as a pure type system with two sorts\markdownRendererSoftLineBreak
{}\markdownRendererInlineMath{\{ ∗, □\}}, a single axiom \markdownRendererInlineMath{∗ : □} and four rules\markdownRendererSoftLineBreak
{}corresponding to simple functions \markdownRendererInlineMath{(∗, ∗, ∗)},\markdownRendererSoftLineBreak
{}polymorphic functions \markdownRendererInlineMath{(□, ∗, ∗)},\markdownRendererSoftLineBreak
{}type operators \markdownRendererInlineMath{(□, □, □)} and dependent types\markdownRendererSoftLineBreak
{}\markdownRendererInlineMath{(∗, □, □)}.\markdownRendererParagraphSeparator
{}Both STLC and CoC are strongly normalizing and\markdownRendererSoftLineBreak
{}sound with respect to their set-theoretical\markdownRendererSoftLineBreak
{}semantics \markdownRendererCite{3}+{}{}{tait1967intensional}+{}{}{coquand1990kripke}+{}{}{geuvers1991modular},\markdownRendererSoftLineBreak
{}but it is not true for arbitrary PTS.\markdownRendererInterblockSeparator
{}\begin{theorem*}[Girard'72 \cite{girard1972interpretation, hurkens1995simplification}]The following PTS called System U is not\markdownRendererSoftLineBreak
{}normalizing:\begin{mathpar}
  \begin{array}{c}
    \mathcal{S} = \{ ∗, □, △\},
    \quad
    \mathsf{Ax} = \{(∗, □), (□, △)\},
    \\
    \mathsf{Rule} = \{
      (∗, ∗, ∗), (□, ∗, ∗), (△, ∗, ∗),
      (□, □, □), (△, □, □)
    \}
  \end{array}
\end{mathpar}\markdownRendererInterblockSeparator
{}Furthermore, the offending term has type\markdownRendererSoftLineBreak
{}\markdownRendererInlineMath{Π (P : ∗).P}.\end{theorem*}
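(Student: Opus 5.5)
Following Hurkens' simplification, I would exhibit an explicit closed term $\mathsf{loop} : Π(P:∗).P$ and show that it has no normal form. The argument has three parts. First, I encode a ``paradoxical universe'' in System U. Second, I derive a proof of $\bot := Π(P:∗).P$. Third, I argue that any closed inhabitant of $\bot$ in a PTS must be non-normalizing.

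\textbf{Step 1 (encoding).} I work in System U. The rule $(△,□,□)$ allows quantification over all kinds inside a kind, and $(△,∗,∗)$ allows quantification over all kinds inside a proposition; these are the ingredients that make the paradox go through. Define $\bot := Π(P:∗).P : ∗$ and $¬A := A → \bot$. For a kind $X:□$, put $\wp X := X → ∗$, a kind by $(□,□,□)$. Then define
\[ U := Π(X:□).\,((\wp\wp X → X) → \wp\wp X) : □, \]
which is well-typed by $(△,□,□)$ since $□:△$. Next I define the two maps of Hurkens,
\[ τ : \wp\wp U → U, \qquad σ : U → \wp\wp U, \]
with $τ\,t := λX\,f\,p.\; t\,(λx.\, p\,(f\,(x\,X\,f)))$ and $σ\,s := s\,U\,τ$. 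Each of these must be typechecked carefully against the rules.

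\textbf{Step 2 (the paradox).} The key property is that $σ(τ\,t)$ $β$-reduces to a term that behaves like $λp.\,t\,(λx.\,p\,(τ(σ\,x)))$. This yields a retraction-like structure of $U$ onto its double powerset. With it I carry out Burali-Forti-style reasoning:
\begin{itemize}
\item Define the predicate $\Delta := λy.\, ¬Π(p:\wp U).\, σ\,y\,p → p\,(τ(σ\,y))$ and the element $Ω := τ(λp.\, Π(x:U).\, σ\,x\,p → p\,x)$.
\item Prove the lemma $Π(p:\wp U).\,(Π(x:U).\,σ\,x\,p → p\,x) → p\,Ω$.
\item Derive $¬(\text{that lemma instantiated at } \Delta)$.
\end{itemize}
Applying the second to the first gives a term $\mathsf{loop} : \bot$. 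Every proof term quantifies over $p : \wp U$ (rule $(□,∗,∗)$) and over $x:U$; these are the only rules needed.

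\textbf{Step 3 (non-normalization).} I would prove directly, by the standard shape analysis of normal forms, that no closed normal term of type $\bot$ exists. A closed $β$-normal term is either an abstraction or a neutral term headed by a variable, and a closed term has no free variables. So a normal inhabitant of $Π(P:∗).P$ has the form $λP.\,n$ with $n : P$ normal in the context $P:∗$. Such an $n$ must be neutral with head $P$, but $P$ is not of a function type, so this is impossible. Combining this with subject reduction for PTS (which I assume as a general metatheoretical fact), $\mathsf{loop}$ has no normal form, so System U is not (even weakly) normalizing.

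The main obstacle is Step 2: producing the proof terms and checking that they typecheck. In particular, the conversion $σ(τ\,t)=_β\ldots$ must be used via \textsc{Conv}, and every $Π$ must land in an allowed rule. I would first write the derivation in Hurkens' informal notation, then annotate each binder with its sort.
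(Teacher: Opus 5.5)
The paper gives no proof of this theorem; it only quotes the result with references to Girard and Hurkens. Your proposal correctly reconstructs exactly Hurkens' cited argument (the universe $U$, the maps $\tau,\sigma$ with $\sigma(\tau\,t) =_\beta \lambda p.\,t\,(\lambda x.\,p\,(\tau(\sigma\,x)))$, the element $\Omega$ and the predicate $\Delta$), closed off by the standard fact that $\Pi(P:*).P$ has no closed normal inhabitant. Two small caveats: you cannot reuse the paper's subject reduction theorem, because it is stated only for total PTS and System~U is not total (e.g.\ $\triangle$ has no axiom), so you must invoke the general PTS result from the literature; and the rule $(\triangle,*,*)$ that you single out as an ingredient is never actually used, since Hurkens' paradox already lives in System~U$^{-}$.
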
\markdownRendererInterblockSeparator
{}This result is dubbed “Girard's paradox” since it\markdownRendererSoftLineBreak
{}renders System U inconsistent with respect to its\markdownRendererSoftLineBreak
{}intended logical semantics: inhabitedness of\markdownRendererSoftLineBreak
{}\markdownRendererInlineMath{Π (P : ∗).P} would mean that every proposition is\markdownRendererSoftLineBreak
{}provable.\markdownRendererSoftLineBreak
{}Thus, any system where System U embeds is\markdownRendererSoftLineBreak
{}inconsistent, too; in particular,\markdownRendererSoftLineBreak
{}an original version \markdownRendererCite{1}+{}{}{martinlof1971theory} of\markdownRendererSoftLineBreak
{}Martin-Löf's Intuitionistic Type Theory \markdownRendererCite{1}+{}{}{martinlof1984intuitionistic}\markdownRendererSoftLineBreak
{}embeds the following PTS known as “Type-in-Type”\markdownRendererSoftLineBreak
{}which is inconsistent due to Girard's paradox:\begin{mathpar}
  \mathcal{S} = \{ ∗ \}, \quad \mathsf{Ax} = \{ ∗ : ∗ \}, \quad
  \mathsf{Rule} = \{(∗, ∗, ∗)\}.
\end{mathpar}\markdownRendererInterblockSeparator
{}For a pure type system with usual set-theoretical\markdownRendererSoftLineBreak
{}semantics, it is natural to expect that a sort of\markdownRendererSoftLineBreak
{}\markdownRendererEmphasis{small types} \markdownRendererInlineMath{∗} is closed under \markdownRendererInlineMath{Π},\markdownRendererSoftLineBreak
{}thus we cannot forbid the rule \markdownRendererInlineMath{(∗, ∗, ∗)}; then,\markdownRendererSoftLineBreak
{}a circular axiom \markdownRendererInlineMath{∗ : ∗} has to go.\markdownRendererSoftLineBreak
{}Long story short, it motivates an introduction of\markdownRendererSoftLineBreak
{}a \markdownRendererEmphasis{universe hierarchy} \markdownRendererCite{2}+{}{}{martinlof1975intuitionistic}+{}{}{harper1991universes}\markdownRendererSoftLineBreak
{}\markdownRendererInlineMath{\mathcal{S} = \{\mathcal{U}_k\}_{k ∈ ℕ}} with\markdownRendererSoftLineBreak
{}axioms \markdownRendererInlineMath{\mathcal{U}_k : \mathcal{U}_{k + 1}} and\markdownRendererSoftLineBreak
{}rules\markdownRendererSoftLineBreak
{}\markdownRendererInlineMath{(\mathcal{U}_k, \mathcal{U}_l, \mathcal{U}_{k ∨ l})}\markdownRendererSoftLineBreak
{}for each \markdownRendererInlineMath{k, l}; when interpreted in sets,\markdownRendererSoftLineBreak
{}\markdownRendererInlineMath{⟦ \mathcal{U}_k ⟧} is taken to be the \markdownRendererInlineMath{k}-th\markdownRendererSoftLineBreak
{}inaccessible cardinal \markdownRendererCite{2}+{}{}{werner1997sets}+{}{}{aczel1998relating}.\markdownRendererParagraphSeparator
{}Observing this, we took an interest in the following\markdownRendererSoftLineBreak
{}two questions:\markdownRendererInterblockSeparator
{}\markdownRendererUlBegin
\markdownRendererUlItem If we were to drop the rule \markdownRendererInlineMath{(∗, ∗, ∗)} and keep\markdownRendererSoftLineBreak
{}the axiom \markdownRendererInlineMath{∗ : ∗} instead, can we find a\markdownRendererSoftLineBreak
{}nontrivial consistent pure type system?\markdownRendererUlItemEnd 
\markdownRendererUlItem Is there a consistent pure type system with\markdownRendererSoftLineBreak
{}total relations \markdownRendererInlineMath{\mathsf{Ax}} and\markdownRendererSoftLineBreak
{}\markdownRendererInlineMath{\mathsf{Rule}} whose natural set-theoretical\markdownRendererSoftLineBreak
{}semantics do not involve the assumption of\markdownRendererSoftLineBreak
{}inaccessible cardinals?\markdownRendererUlItemEnd 
\markdownRendererUlEnd \markdownRendererInterblockSeparator
{}Turns out that, for both questions,\markdownRendererSoftLineBreak
{}the answer is “yes”.\markdownRendererSoftLineBreak
{}Furthermore, both yes-s are witnessed by a single\markdownRendererSoftLineBreak
{}PTS, which we dub \PTSrk{}:\begin{mathpar}
  \mathcal{S} = \{\rk{r}\}_{r ∈ ℕ}
  \quad
  \mathsf{Ax} = \{(\rk{r}, \rk{0})\}_{r ∈ ℕ}
  \quad
  \mathsf{Rule} = \{(\rk{r}, \rk{s},
          \rk{(r + 1) ∨ s})\}_{r, s ∈ ℕ}
\end{mathpar}\markdownRendererInterblockSeparator
{}(Naming “\markdownRendererInlineMath{\mathsf{rk}}” comes from \markdownRendererEmphasis{rank},\markdownRendererSoftLineBreak
{}since rules compute precisely the rank of\markdownRendererSoftLineBreak
{}functions, which is the maximal nesting of \markdownRendererInlineMath{Π} in\markdownRendererSoftLineBreak
{}the domain type.)\markdownRendererParagraphSeparator
{}Indeed, there is a circular axiom\markdownRendererSoftLineBreak
{}\markdownRendererInlineMath{\rk{0} : \rk{0}}, and System U is not embeddable\markdownRendererSoftLineBreak
{}since \markdownRendererInlineMath{\rk{r}} is not closed under \markdownRendererInlineMath{Π} for any\markdownRendererSoftLineBreak
{}\markdownRendererInlineMath{r}.\markdownRendererSoftLineBreak
{}Also, as we will see in Section \markdownRendererLink{\markdownRendererHash{}rk-sn}{\markdownRendererHash{}rk-sn}{#rk-sn}{},\markdownRendererSoftLineBreak
{}our \PTSrk{} is strongly normalizing despite an\markdownRendererSoftLineBreak
{}intuition that \markdownRendererInlineMath{\rk{r} : \rk{0}} might “feel\markdownRendererSoftLineBreak
{}wrong”.\markdownRendererSoftLineBreak
{}Modulo general metatheory of pure type systems\markdownRendererSoftLineBreak
{}laid out in Section \markdownRendererLink{\markdownRendererHash{}pts-metatheory}{\markdownRendererHash{}pts-metatheory}{#pts-metatheory}{},\markdownRendererSoftLineBreak
{}a normalization proof turned out to be\markdownRendererSoftLineBreak
{}suspiciously simple, even more so in comparison\markdownRendererSoftLineBreak
{}with normalization proofs of other dependently\markdownRendererSoftLineBreak
{}typed systems \markdownRendererCite{3}+{}{}{geuvers1991modular}+{}{}{barras1999autovalidation}+{}{}{abel2018decidability}.\markdownRendererSoftLineBreak
{}To dispel our own doubts, we formalized a proof of\markdownRendererSoftLineBreak
{}normalization in Rocq (formerly Coq) without using\markdownRendererSoftLineBreak
{}any additional axioms, yielding a fully\markdownRendererSoftLineBreak
{}constructive proof; our journey in formalization\markdownRendererSoftLineBreak
{}is summarized in Section \markdownRendererLink{\markdownRendererHash{}formalization}{\markdownRendererHash{}formalization}{#formalization}{} and the\markdownRendererSoftLineBreak
{}whole development is available at \markdownRendererLink{https://github.com/TurtlePU/pts-rk}{https://github.com/TurtlePU/pts-rk}{https://github.com/TurtlePU/pts-rk}{}.\markdownRendererParagraphSeparator
{}Having completed the proof, we noticed that it\markdownRendererSoftLineBreak
{}exposes a primitive recursive procedure to\markdownRendererSoftLineBreak
{}normalize terms of \PTSrk{}; we dug further and\markdownRendererSoftLineBreak
{}discovered that, actually, there is an intimate\markdownRendererSoftLineBreak
{}connection between a class of elementary recursive\markdownRendererSoftLineBreak
{}functions \markdownRendererCite{2}+{}{}{kalmar1943}+{}{}{rose1984subrecursion} and\markdownRendererSoftLineBreak
{}our system, as seen in Sections \markdownRendererLink{\markdownRendererHash{}complexity}{\markdownRendererHash{}complexity}{#complexity}{}-\markdownRendererLink{\markdownRendererHash{}computational}{\markdownRendererHash{}computational}{#computational}{}.\markdownRendererSoftLineBreak
{}As a byproduct of our study on complexity,\markdownRendererSoftLineBreak
{}we built a sound set-theoretic model for \PTSrk{}\markdownRendererSoftLineBreak
{}(see Section \markdownRendererLink{\markdownRendererHash{}rk-model}{\markdownRendererHash{}rk-model}{#rk-model}{}) which assumes\markdownRendererSoftLineBreak
{}\markdownRendererEmphasis{Generalized Continuum Hypothesis} \markdownRendererCite{4}+{}{}{jech2003set}+{}{}{hausdorff1908}+{}{}{godel1938consistency}+{}{}{godel1940consistency}\markdownRendererSoftLineBreak
{}which is an unusual pick for dependently typed\markdownRendererSoftLineBreak
{}systems where \markdownRendererEmphasis{inaccessible cardinals} are a more\markdownRendererSoftLineBreak
{}common assumption \markdownRendererCite{2}+{}{}{werner1997sets}+{}{}{lee2011proof}.\markdownRendererSoftLineBreak
{}Before concluding, we provide a sneak peek into\markdownRendererSoftLineBreak
{}what it is like to write proofs in \PTSrk{} and\markdownRendererSoftLineBreak
{}provide a full interpretation of\markdownRendererSoftLineBreak
{}Tarski–Mostowski–Robinson arithmetic \markdownRendererInlineMath{R} (Section \markdownRendererLink{\markdownRendererHash{}proof-system}{\markdownRendererHash{}proof-system}{#proof-system}{}).\markdownRendererInterblockSeparator
{}\markdownRendererSectionBegin
\markdownRendererHeadingTwo{Use of artificial intelligence}\markdownRendererInterblockSeparator
{}At various points in our research,\markdownRendererSoftLineBreak
{}we interacted with GPT-6 Astra \markdownRendererCite{1}+{}{}{gpt6astra2026} to\markdownRendererSoftLineBreak
{}prove something for us.\markdownRendererSoftLineBreak
{}Sometimes it was a rehash of previously\markdownRendererSoftLineBreak
{}established results; sometimes it presented\markdownRendererSoftLineBreak
{}something new.\markdownRendererSoftLineBreak
{}In case we used the ideas hinted by LLM,\markdownRendererSoftLineBreak
{}we explicitly state so right next to the proof;\markdownRendererSoftLineBreak
{}even so, the text is completely human written\markdownRendererSoftLineBreak
{}since we wanted to make sure we completely\markdownRendererSoftLineBreak
{}understood the results (in addition,\markdownRendererSoftLineBreak
{}generated proofs always required additional\markdownRendererSoftLineBreak
{}explanations or a complete rewrite to make them\markdownRendererSoftLineBreak
{}understandable and simpler to read).\markdownRendererSoftLineBreak
{}We also used Claude Fable 5.1 \markdownRendererCite{1}+{}{}{claude_fable}\markdownRendererSoftLineBreak
{}to identify additional points of reference.\markdownRendererInterblockSeparator
{}
\markdownRendererSectionEnd \markdownRendererSectionBegin
\markdownRendererHeadingTwo{Formalization status}\markdownRendererInterblockSeparator
{}Every result up to the proof of strong\markdownRendererSoftLineBreak
{}normalization for \PTSrk{} was formally verified\markdownRendererSoftLineBreak
{}in Rocq using handwritten proofs.\markdownRendererSoftLineBreak
{}Such proofs are marked with a \proof{rocq-ai:https://github.com/TurtlePU/pts-rk};\markdownRendererSoftLineBreak
{}each icon is a hyperlink to the statement and\markdownRendererSoftLineBreak
{}proof in Rocq with the version pinned at the moment of\markdownRendererSoftLineBreak
{}submission.\markdownRendererParagraphSeparator
{}Results after that were formalized in Rocq as\markdownRendererSoftLineBreak
{}well, albeit not by us but rather by GPT-6 Astra.\markdownRendererSoftLineBreak
{}Such results are marked with \formalized{rocq-ai:https://github.com/TurtlePU/pts-rk/pull/1};\markdownRendererSoftLineBreak
{}just as with handwritten results, each icon is a\markdownRendererSoftLineBreak
{}hyperlink to the statement and proof in Rocq with\markdownRendererSoftLineBreak
{}the version pinned at the moment of submission.\markdownRendererSoftLineBreak
{}Where only a component of a paper claim is\markdownRendererSoftLineBreak
{}checked, a note explaining what exactly is\markdownRendererSoftLineBreak
{}formalized accompanies the badge.\markdownRendererInterblockSeparator
{}
\markdownRendererSectionEnd 
\markdownRendererSectionEnd \markdownRendererSectionBegin
\markdownRendererHeaderAttributeContextBegin
\markdownRendererAttributeIdentifier{pts-metatheory}\markdownRendererHeadingOne{Metatheory of pure type systems}\markdownRendererHeaderAttributeContextEnd{}\markdownRendererInterblockSeparator
{}For ease of presentation of our results in this\markdownRendererSoftLineBreak
{}and following sections, we assume a Barendregt\markdownRendererSoftLineBreak
{}convention \markdownRendererCite{1}+{}{}{barendregt1984lambda} that all bound\markdownRendererSoftLineBreak
{}variables are distinct from free variables and\markdownRendererSoftLineBreak
{}from each other; furthermore,\markdownRendererSoftLineBreak
{}\markdownRendererInlineMath{α}-equivalence (renaming of bound variables) is\markdownRendererSoftLineBreak
{}assumed as a trivial equality on terms.\markdownRendererSoftLineBreak
{}Nevertheless, our formalization takes variable\markdownRendererSoftLineBreak
{}treatment seriously; see Section \markdownRendererLink{\markdownRendererHash{}formalization}{\markdownRendererHash{}formalization}{#formalization}{}\markdownRendererSoftLineBreak
{}for more details.\markdownRendererInterblockSeparator
{}\markdownRendererSectionBegin
\markdownRendererHeadingTwo{PTS as a rewriting system}\markdownRendererInterblockSeparator
{}In Fig.\markdownRendererSoftLineBreak
{}\markdownRendererLink{\markdownRendererHash{}typing}{\markdownRendererHash{}typing}{#typing}{}, a rule of \markdownRendererEmphasis{conversion} (\textsc{Conv})\markdownRendererSoftLineBreak
{}mentions \markdownRendererEmphasis{definitional equality} of types\markdownRendererSoftLineBreak
{}\markdownRendererInlineMath{T =_β U}.\markdownRendererSoftLineBreak
{}It is defined as a reflexive, symmetric,\markdownRendererSoftLineBreak
{}transitive closure of a one-step reduction.\markdownRendererInterblockSeparator
{}\begin{definition}[One-step reduction]Relation of a one-step reduction \markdownRendererInlineMath{t →_β t'} is the\markdownRendererSoftLineBreak
{}smallest relation satisfying the following rules:\begin{mathpar}
  \inferrule{T →_β T'}
      {Π (x : T).U →_β Π (x : T').U}
  \and
  \inferrule{U →_β U'}
      {Π (x : T).U →_β Π (x : T).U'}
  \and
  \inferrule{T →_β T'}
      {λ (x : T).t →_β λ (x : T').t}
  \and
  \inferrule{t →_β t'}
      {λ (x : T).t →_β λ (x : T).t'}
  \and
  \inferrule{t →_β t'}{t\,u →_β t'\,u}
  \and
  \inferrule{u →_β u'}{t\,u →_β t\,u'}
  \and
  \inferrule[Beta]{ }
      {(λ (x : T).t)\,u →_β t[u/x]}
\end{mathpar}\end{definition}\markdownRendererInterblockSeparator
{}In essence, actual reduction happens only in rule\markdownRendererSoftLineBreak
{}\textsc{Beta}; other rules simply state that we can\markdownRendererSoftLineBreak
{}freely reduce any subterm.\markdownRendererParagraphSeparator
{}The following result will be greatly useful in our\markdownRendererSoftLineBreak
{}development.\markdownRendererSoftLineBreak
{}It was proven many times over for different\markdownRendererSoftLineBreak
{}similar systems \markdownRendererCite{2}+{}{}{church1936properties}+{}{}{barendregt1992lambda};\markdownRendererSoftLineBreak
{}we briefly revisit the proof since previous\markdownRendererSoftLineBreak
{}presentations usually did not allow reductions in\markdownRendererSoftLineBreak
{}the type of argument \markdownRendererInlineMath{T} in \markdownRendererInlineMath{λ (x : T).t}.\markdownRendererInterblockSeparator
{}\begin{theorem}[Characterization of $( =_\beta )$ \proof{rocq:DefEq.v\#L9}]In an arbitrary PTS,\begin{mathpar}
  t =_β u \quad ⇔ \quad
  ∃ v.\;t →_β^∗ v ∧ u →_β^∗ v,
\end{mathpar}\markdownRendererInterblockSeparator
{}where \markdownRendererInlineMath{( →_β^∗ )} is the reflexive transitive closure\markdownRendererSoftLineBreak
{}of \markdownRendererInlineMath{( →_β )}.\end{theorem}\markdownRendererInterblockSeparator
{}\begin{proof}Note that it is enough to prove that \markdownRendererInlineMath{( →_β^∗ )}\markdownRendererSoftLineBreak
{}has a diamond property — for each \markdownRendererInlineMath{t_0, t_1, t_2}\markdownRendererSoftLineBreak
{}such that \markdownRendererInlineMath{t_0 →_β^∗ t_1} and \markdownRendererInlineMath{t_0 →_β^∗ t_2}\markdownRendererSoftLineBreak
{}there exists a \markdownRendererInlineMath{t'} such that \markdownRendererInlineMath{t_1 →_β^∗ t'} and\markdownRendererSoftLineBreak
{}\markdownRendererInlineMath{t_2 →_β^∗ t'}:\markdownRendererSoftLineBreak
{}\vspace{-1em}\markdownRendererParagraphSeparator
{}\begin{tikzcd}
  [ampersand replacement = \&]
    t_0 \arrow[r] \arrow[d] \&
    t_1 \arrow[d, dashed]
  \\
    t_2 \arrow[r, dashed] \&
    t'
\end{tikzcd}Then, any ‘mountains’ in the path \markdownRendererInlineMath{t =_β u} can be\markdownRendererSoftLineBreak
{}switched into ‘valleys’.\markdownRendererInterblockSeparator
{}Now, diamond property is proven by the standard\markdownRendererSoftLineBreak
{}technique of parallel reduction \markdownRendererCite{1}+{}{}{martinlof1971theory}\markdownRendererSoftLineBreak
{}which is defined as the smallest relation\markdownRendererSoftLineBreak
{}satisfying:\begin{mathpar}
  \inferrule{ }{t ⇉ t}
  \and
  \inferrule
    {T ⇉ T' \\ U ⇉ U'}
    {Π (x : T).U ⇉ Π (x : T').U'}
  \and
  \inferrule
    {T ⇉ T' \\ t ⇉ t'}
    {λ (x : T).t ⇉ λ (x : T').t'}
  \and
  \inferrule
    {t ⇉ t' \\ u ⇉ u'}
    {t\,u ⇉ t'\,u'}
  \and
  \inferrule
    {t ⇉ t' \\ u ⇉ u'}
    {(λ (x : T).t)\,u ⇉ t'[u'/x]}
\end{mathpar}\markdownRendererParagraphSeparator
{}Intuitively, while \markdownRendererInlineMath{( →_β )} is a single\markdownRendererSoftLineBreak
{}\markdownRendererInlineMath{β}-reduction somewhere in the term and\markdownRendererSoftLineBreak
{}\markdownRendererInlineMath{( →_β )^∗} is a sequence of reductions,\markdownRendererSoftLineBreak
{}\markdownRendererInlineMath{( ⇉ )} is several simultaneous reductions\markdownRendererSoftLineBreak
{}immediately available in the term.\markdownRendererSoftLineBreak
{}Notice, then, that \markdownRendererInlineMath{( →_β ) ⊆ ( ⇉ ) ⊆ ( →_β^∗ )};\markdownRendererSoftLineBreak
{}by monotonicity of reflexive transitive closure,\markdownRendererSoftLineBreak
{}\markdownRendererInlineMath{( →_β^∗ ) ⊆ ( ⇉^∗ ) ⊆ ( →_β^∗ )^∗ = ( →_β^∗ )},\markdownRendererSoftLineBreak
{}so \markdownRendererInlineMath{( ⇉^∗ ) = ( →_β^∗ )}.\markdownRendererSoftLineBreak
{}Thus, it is enough to prove the diamond property of\markdownRendererSoftLineBreak
{}\markdownRendererInlineMath{( ⇉^∗ )}.\markdownRendererSoftLineBreak
{}Diamond property is preserved under reflexive\markdownRendererSoftLineBreak
{}transitive closure, so it is enough to prove\markdownRendererSoftLineBreak
{}diamond property of \markdownRendererInlineMath{( ⇉ )}, which holds by\markdownRendererSoftLineBreak
{}tedious case analysis.\end{proof}\markdownRendererInterblockSeparator
{}Setting aside practical applications of this\markdownRendererSoftLineBreak
{}theorem (to algorithmically check equivalence of\markdownRendererSoftLineBreak
{}terms, we can now search for a common reduction\markdownRendererSoftLineBreak
{}result), it will prove useful already in our proof\markdownRendererSoftLineBreak
{}development.\markdownRendererSoftLineBreak
{}For example, the following “equality inversion”\markdownRendererSoftLineBreak
{}lemmas are direct corollaries:\markdownRendererInterblockSeparator
{}\begin{corollary}[Equality inversion]If \markdownRendererInlineMath{s =_β s'}, then \markdownRendererInlineMath{s = s' ∈ \mathcal{S}};\markdownRendererSoftLineBreak
{}if \markdownRendererInlineMath{Π (x : T).U =_β Π (x : T').U'},\markdownRendererSoftLineBreak
{}then \markdownRendererInlineMath{T =_β T'} and \markdownRendererInlineMath{U =_β U'}.\end{corollary}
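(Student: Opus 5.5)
We derive both claims from the Characterization of $( =_\beta )$ together with a simple observation about the shape of reducts. The key auxiliary fact is a \emph{shape preservation} lemma for $( \to_\beta^* )$:
\begin{itemize}
\item a sort $s$ has no one-step reducts, since no rule of one-step reduction has a sort on the left;
\item if $\Pi (x : T).U \to_\beta^* v$, then $v = \Pi (x : T'').U''$ for some $T''$, $U''$ with $T \to_\beta^* T''$ and $U \to_\beta^* U''$.
\end{itemize}

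The first item gives that $s \to_\beta^* v$ implies $v = s$. The second is proved by induction on the length of the reduction sequence, and the step case is an inversion of the definition of one-step reduction. A term of the form $\Pi (x : T).U$ can only be the left-hand side of the two congruence rules for $\Pi$. The \textsc{Beta} rule does not apply, since its left-hand side is an application. Each congruence step reduces exactly one component and keeps the head $\Pi$, so the invariant is maintained.

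For the sort case: from $s =_\beta s'$, the Characterization gives some $v$ with $s \to_\beta^* v$ and $s' \to_\beta^* v$. By the first item, $v = s$ and $v = s'$, so $s = s'$ as elements of $\mathcal{S}$.

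For the $\Pi$ case: from $\Pi (x : T).U =_\beta \Pi (x : T').U'$ we obtain a common reduct $v$. By the second item, $v = \Pi (x : T_1).U_1$ with $T \to_\beta^* T_1$ and $U \to_\beta^* U_1$. Likewise, $v = \Pi (x : T_2).U_2$ with $T' \to_\beta^* T_2$ and $U' \to_\beta^* U_2$. Syntactic equality of the two $\Pi$-terms, taken up to $\alpha$-equivalence under the Barendregt convention so that the binders agree, gives $T_1 = T_2$ and $U_1 = U_2$. Hence $T$ and $T'$ have a common reduct, and so do $U$ and $U'$. The easy direction ($\Leftarrow$) of the Characterization, or simply the fact that $( =_\beta )$ contains $( \to_\beta^* )$ and is symmetric and transitive, then yields $T =_\beta T'$ and $U =_\beta U'$.

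The only step with real content is the shape preservation lemma. Even that is routine inversion; the one subtle point is the bound variable. We need $\alpha$-renaming so that the two reducts of $v$ use the same binder $x$. On paper this is handled by the Barendregt convention; in a de Bruijn formalization it disappears entirely.
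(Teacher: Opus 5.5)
Your proof is correct and follows the paper's approach. The paper states equality inversion as a direct corollary of the Characterization of $(=_\beta)$, and your argument fills in exactly the intended details: take a common reduct, then use that sorts are irreducible and that every reduct of $\Pi(x:T).U$ is a $\Pi$-type whose components are reducts of $T$ and $U$.
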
\markdownRendererInterblockSeparator
{}They will become necessary in proving subject\markdownRendererSoftLineBreak
{}reduction, or type preservation.\markdownRendererSoftLineBreak
{}But first, we will show some structural properties\markdownRendererSoftLineBreak
{}of a typing judgement given in Fig.\markdownRendererSoftLineBreak
{}\markdownRendererLink{\markdownRendererHash{}typing}{\markdownRendererHash{}typing}{#typing}{}.\markdownRendererInterblockSeparator
{}
\markdownRendererSectionEnd \markdownRendererSectionBegin
\markdownRendererHeadingTwo{Properties of a typing judgement}\markdownRendererInterblockSeparator
{}These properties have certainly been proven before\markdownRendererSoftLineBreak
{}for PTS \markdownRendererCite{2}+{}{}{barendregt1992lambda}+{}{}{geuvers1991modular},\markdownRendererSoftLineBreak
{}but our formulation of the typing rules slightly\markdownRendererSoftLineBreak
{}differs in important places, so it is better to\markdownRendererSoftLineBreak
{}recheck that these properties hold.\markdownRendererInterblockSeparator
{}\begin{lemma}[Weakening \proof{rocq:Typing.v\#L50}]The following rule is admissible:\begin{mathpar}
  \dfrac{Γ, Δ ⊢ t : T}
        {Γ, x : U, Δ ⊢ t : T}.
\end{mathpar}\end{lemma}\markdownRendererInterblockSeparator
{}\begin{proof}By structural induction on the typing derivation.\end{proof}\markdownRendererInterblockSeparator
{}\begin{lemma}[Substitution \proof{rocq:Typing.v\#L78}]The following rule is admissible:\begin{mathpar}
  \dfrac{Γ, x : U, Δ ⊢ t : T \quad
         Γ ⊢ u : U}
        {Γ, Δ[u/x] ⊢ t[u/x] : T[u/x]}.
\end{mathpar}\end{lemma}\markdownRendererInterblockSeparator
{}\begin{proof}By structural induction on the typing derivation\markdownRendererSoftLineBreak
{}for \markdownRendererInlineMath{t}.\markdownRendererSoftLineBreak
{}For variables in \markdownRendererInlineMath{Δ}, it is necessary that both\markdownRendererSoftLineBreak
{}\markdownRendererInlineMath{Δ} and \markdownRendererInlineMath{T} get substitutions of \markdownRendererInlineMath{u} for \markdownRendererInlineMath{x} so\markdownRendererSoftLineBreak
{}\textsc{Var} is still applicable;\markdownRendererSoftLineBreak
{}for places in \markdownRendererInlineMath{t} where \markdownRendererInlineMath{x} used to be, we need to\markdownRendererSoftLineBreak
{}apply a weakening lemma to the typing derivation for\markdownRendererSoftLineBreak
{}\markdownRendererInlineMath{u} exactly \markdownRendererInlineMath{|Δ|} times.\markdownRendererSoftLineBreak
{}In the case of \textsc{Conv} it is important that\markdownRendererSoftLineBreak
{}definitional equality of types is preserved by\markdownRendererSoftLineBreak
{}substitution (thanks to characterization of\markdownRendererSoftLineBreak
{}\markdownRendererInlineMath{( =_β )}).\end{proof}\markdownRendererInterblockSeparator
{}In addition, there is a family of so-called\markdownRendererSoftLineBreak
{}\markdownRendererEmphasis{inversion lemmas} \markdownRendererCite{2}+{}{}{barendregt1992lambda}+{}{}{vanbenthemjutting1993typing}\markdownRendererSoftLineBreak
{}stating that a type of well-typed term is largely\markdownRendererSoftLineBreak
{}determined by the shape of a term.\markdownRendererSoftLineBreak
{}They are also proven by straightforward induction\markdownRendererSoftLineBreak
{}on typing derivation\markdownRendererSoftLineBreak
{}(each one is proven separately,\markdownRendererSoftLineBreak
{}no mutual induction involved):\markdownRendererInterblockSeparator
{}\begin{lemma}[Inversion lemmas \proof{rocq:Inversions.v}]\label{inversion}\begin{align}
  Γ ⊢ s : T & ⇒ ∃ s'. (s, s') ∈ \mathsf{Ax} ∧ T =_β s'
  \\
  Γ ⊢ Π (x : T).U : V & ⇒ ∃ s_T, s_U, s.\left\{
  \begin{array}{l}
    (s_T, s_U, s) ∈ \mathsf{Rule} \\
    Γ ⊢ T : s_T \\
    Γ, x : T ⊢ U : s_U \\
    V =_β s
  \end{array}\right. \\
  Γ, x : T, Δ ⊢ x : U & ⇒ T =_β U
  \\
  Γ ⊢ λ (x : T).t : V & ⇒ ∃ s, U.\left\{
  \begin{array}{l}
    Γ ⊢ T : s \\
    Γ, x : T ⊢ t : U \\
    V =_β Π (x : T).U
  \end{array}\right. \\
  Γ ⊢ t\,u : V & ⇒ ∃ U, T.\left\{
  \begin{array}{l}
    Γ ⊢ t : Π (x : U).T \\
    Γ ⊢ u : U \\
    V =_β T[u/x]
  \end{array}\right.
\end{align}\end{lemma}\markdownRendererInterblockSeparator
{}To progress further, we need to make certain\markdownRendererSoftLineBreak
{}assumptions about the PTS in question.\markdownRendererInterblockSeparator
{}\begin{definition}[Functional PTS]A pure type system is called \markdownRendererEmphasis{functional} if\markdownRendererSoftLineBreak
{}\markdownRendererInlineMath{\mathsf{Ax} ⊆ \mathcal{S} × \mathcal{S}} and\markdownRendererSoftLineBreak
{}\markdownRendererInlineMath{\mathsf{Rule} ⊆ \mathcal{S}^2 × \mathcal{S}} are\markdownRendererSoftLineBreak
{}functional relations, that is,\begin{mathpar}
  (s : s') ∧ (s : s”) ⇒ s' = s”
  \quad \text{and} \quad
  (s_1, s_2, s) ∈ \mathsf{Rule} ∧ (s_1, s_2, s') ∈ \mathsf{Rule} ⇒ s = s'
\end{mathpar}\end{definition}\markdownRendererInterblockSeparator
{}\begin{definition}[Total PTS]A pure type system is called \markdownRendererEmphasis{total} if\markdownRendererSoftLineBreak
{}\markdownRendererInlineMath{\mathsf{Ax} ⊆ \mathcal{S} × \mathcal{S}} and\markdownRendererSoftLineBreak
{}\markdownRendererInlineMath{\mathsf{Rule} ⊆ \mathcal{S}^2 × \mathcal{S}} are\markdownRendererSoftLineBreak
{}total relations, that is,\begin{mathpar}
  ∀ s.∃ s'.(s, s') ∈ \mathsf{Ax}
  \quad \text{and} \quad
  ∀ s_1, s_2.∃ s.(s_1, s_2, s) ∈ \mathsf{Rule}
\end{mathpar}\end{definition}\markdownRendererInterblockSeparator
{}\begin{theorem}[Uniqueness of typing \proof{rocq:Inversions.v\#L84}]In a functional PTS, the following rule is\markdownRendererSoftLineBreak
{}admissible:\begin{mathpar}
  \dfrac{Γ ⊢ t : T \quad Γ ⊢ t : T'}
        {T =_β T'}.
\end{mathpar}\end{theorem}\markdownRendererInterblockSeparator
{}\begin{proof}By structural induction on \markdownRendererInlineMath{t} and inversion of\markdownRendererSoftLineBreak
{}the given typing derivations.\markdownRendererInterblockSeparator
{}\markdownRendererUlBegin
\markdownRendererUlItem \markdownRendererInlineMath{t = s ∈ \mathcal{S}}: by functionality of\markdownRendererSoftLineBreak
{}\markdownRendererInlineMath{\mathsf{Ax}}.\markdownRendererUlItemEnd 
\markdownRendererUlItem \markdownRendererInlineMath{t = Π (x : T).U}: inversion of typing gives a\markdownRendererSoftLineBreak
{}pair of (possibly different) sorts for \markdownRendererInlineMath{T} and\markdownRendererSoftLineBreak
{}another pair for \markdownRendererInlineMath{U}.\markdownRendererSoftLineBreak
{}By induction we get that ranks inside pairs are\markdownRendererSoftLineBreak
{}definitionally equal, which by equality\markdownRendererSoftLineBreak
{}inversion gives strictly equal sorts.\markdownRendererSoftLineBreak
{}Then what is needed holds by the functionality of\markdownRendererSoftLineBreak
{}\markdownRendererInlineMath{\mathsf{Rule}}.\markdownRendererUlItemEnd 
\markdownRendererUlItem The case for \markdownRendererInlineMath{t = x ∈ \mathsf{Var}} is trivial.\markdownRendererUlItemEnd 
\markdownRendererUlItem \markdownRendererInlineMath{t = (λ (x : T).u)}: by induction hypotheses and\markdownRendererSoftLineBreak
{}by congruence of \markdownRendererInlineMath{( =_β )}.\markdownRendererUlItemEnd 
\markdownRendererUlItem \markdownRendererInlineMath{t = u\,v}: by typing inversion and by induction\markdownRendererSoftLineBreak
{}hypothesis for \markdownRendererInlineMath{u} we get \markdownRendererInlineMath{u : Π (x : T_1).U_1}\markdownRendererSoftLineBreak
{}and \markdownRendererInlineMath{Π (x : T_1).U_1 =_β Π (x : T_2).U_2}.\markdownRendererSoftLineBreak
{}We need \markdownRendererInlineMath{U_1[v/x] =_β U_2[v/x]} which we get\markdownRendererSoftLineBreak
{}from equality inversion and preservation of\markdownRendererSoftLineBreak
{}def.eq.\markdownRendererSoftLineBreak
{}under substitution. \vspace{-1em}\markdownRendererUlItemEnd 
\markdownRendererUlEnd \end{proof}\markdownRendererInterblockSeparator
{}\vspace{-.5em}\markdownRendererSoftLineBreak
{}In our proof of normalization, we are going to\markdownRendererSoftLineBreak
{}rely on yet another property: if there is a\markdownRendererSoftLineBreak
{}derivation for \markdownRendererInlineMath{t : T}, then \markdownRendererInlineMath{T} is itself\markdownRendererSoftLineBreak
{}typeable with some sort.\markdownRendererSoftLineBreak
{}However, it does not hold in our current\markdownRendererSoftLineBreak
{}formulation of typing rules: in \markdownRendererInlineMath{Γ ⊢ x : T},\markdownRendererSoftLineBreak
{}nothing prevents a context \markdownRendererInlineMath{Γ} from containing\markdownRendererSoftLineBreak
{}an entry\markdownRendererSoftLineBreak
{}\markdownRendererInlineMath{(x : T')} where \markdownRendererInlineMath{T'} is not typeable.\markdownRendererSoftLineBreak
{}Thus, we need an additional assumption.\markdownRendererInterblockSeparator
{}\begin{definition}[Context well-formedness]Context \markdownRendererInlineMath{Γ} is \markdownRendererEmphasis{well-formed} (written \markdownRendererInlineMath{⊢ Γ}) if\markdownRendererSoftLineBreak
{}every type in it is typed by some sort.\markdownRendererSoftLineBreak
{}Formally, a set of well-formed contexts is the least\markdownRendererSoftLineBreak
{}set closed by the following rules:\begin{mathpar}
  \dfrac{}{⊢ ⋅} \quad \text{and} \quad
  \dfrac{⊢ Γ \quad Γ ⊢ T : s}
        {⊢ Γ, x : T}
\end{mathpar}\markdownRendererInterblockSeparator
{}(where \markdownRendererInlineMath{⋅} denotes an empty context).\end{definition}\markdownRendererInterblockSeparator
{}Usually, when presenting a dependently typed\markdownRendererSoftLineBreak
{}system, context well-formedness is added as a\markdownRendererSoftLineBreak
{}premise to typing rules which would otherwise be\markdownRendererSoftLineBreak
{}premise-free (such as \textsc{Ax} and \textsc{Var}\markdownRendererSoftLineBreak
{}in a PTS).\markdownRendererSoftLineBreak
{}Here we instead provide it as an additional\markdownRendererSoftLineBreak
{}assumption only when necessary;\markdownRendererSoftLineBreak
{}in addition to leading to simpler metatheory (we\markdownRendererSoftLineBreak
{}avoid an otherwise mutually inductive definition\markdownRendererSoftLineBreak
{}of \markdownRendererInlineMath{( ⊢ \_)} and \markdownRendererInlineMath{(\_ ⊢ \_ : \_)}),\markdownRendererSoftLineBreak
{}this is closer to how decision procedures for\markdownRendererSoftLineBreak
{}typing judgements are implemented.\markdownRendererInterblockSeparator
{}\begin{theorem}[Typing yields correct type \proof{rocq:Preservation.v\#L65}]\label{type-is-correct}In a total PTS, if \markdownRendererInlineMath{Γ ⊢ t : T} and \markdownRendererInlineMath{⊢ Γ},\markdownRendererSoftLineBreak
{}then there exists \markdownRendererInlineMath{s ∈ \mathcal{S}} such that\markdownRendererSoftLineBreak
{}\markdownRendererInlineMath{Γ ⊢ T : s}.\end{theorem}\markdownRendererInterblockSeparator
{}\begin{proof}By structural induction on the typing derivation.\markdownRendererInterblockSeparator
{}\markdownRendererUlBegin
\markdownRendererUlItem \textsc{Ax}, \textsc{Rule}: by totality of\markdownRendererSoftLineBreak
{}\markdownRendererInlineMath{\mathsf{Ax}}.\markdownRendererUlItemEnd 
\markdownRendererUlItem \textsc{Var}: follows from \markdownRendererInlineMath{⊢ Γ}.\markdownRendererUlItemEnd 
\markdownRendererUlItem \textsc{Abs}: by totality of \markdownRendererInlineMath{\mathsf{Rule}}.\markdownRendererUlItemEnd 
\markdownRendererUlItem \textsc{App}: \markdownRendererInlineMath{Γ ⊢ t\,u : T[u/x]}.\markdownRendererSoftLineBreak
{}From induction hypothesis for \markdownRendererInlineMath{t} we know that\markdownRendererSoftLineBreak
{}\markdownRendererInlineMath{Γ ⊢ t : Π (x : U).T : s} for some \markdownRendererInlineMath{s},\markdownRendererSoftLineBreak
{}then by typing inversion for \markdownRendererInlineMath{Π (x : U).T} we\markdownRendererSoftLineBreak
{}get that \markdownRendererInlineMath{T} is typable with some sort.\markdownRendererSoftLineBreak
{}Then what is needed follows from a substitution\markdownRendererSoftLineBreak
{}lemma.\markdownRendererUlItemEnd 
\markdownRendererUlItem \textsc{Conv}: what is needed is present in the\markdownRendererSoftLineBreak
{}premise. \vspace{-1em}\markdownRendererUlItemEnd 
\markdownRendererUlEnd \end{proof}\markdownRendererInterblockSeparator
{}\vspace{-.5em}\markdownRendererSoftLineBreak
{}Totality is not a strictly necessary requirement —\markdownRendererSoftLineBreak
{}similar true propositions can be stated for\markdownRendererSoftLineBreak
{}non-total PTS, for example, in CoC typing yields\markdownRendererSoftLineBreak
{}type which is either itself typable or equal to\markdownRendererSoftLineBreak
{}\markdownRendererInlineMath{□} \markdownRendererCite{2}+{}{}{barendregt1992lambda}+{}{}{luo1994computation}.\markdownRendererSoftLineBreak
{}But \PTSrk{} is indeed total so the theorem above\markdownRendererSoftLineBreak
{}is enough for our purposes.\markdownRendererInterblockSeparator
{}\begin{theorem}[Subject reduction \proof{rocq:Preservation.v\#L92}]In a total PTS, the following rule is admissible:\begin{mathpar}
  \dfrac{Γ ⊢ t : T \quad ⊢ Γ
         \quad t →_β t'}
        {Γ ⊢ t' : T}
\end{mathpar}\end{theorem}\markdownRendererInterblockSeparator
{}\begin{proof}By structural induction over derivation of\markdownRendererSoftLineBreak
{}\markdownRendererInlineMath{t →_β t'}.\markdownRendererSoftLineBreak
{}Inversion of a typing judgement yields all\markdownRendererSoftLineBreak
{}necessary info, albeit because of it types are now\markdownRendererSoftLineBreak
{}given only up to definitional equality, so we have\markdownRendererSoftLineBreak
{}to apply the conversion rule in each case.\markdownRendererSoftLineBreak
{}Crucially, it requires the type we convert into to\markdownRendererSoftLineBreak
{}be typable by a sort, so that is where we use\markdownRendererSoftLineBreak
{}Theorem \markdownRendererLink{\markdownRendererHash{}type-is-correct}{\markdownRendererHash{}type-is-correct}{#type-is-correct}{}.\markdownRendererInterblockSeparator
{}The only tricky cases are reductions of a domain\markdownRendererSoftLineBreak
{}type in \markdownRendererInlineMath{Π (x : T).U} and \markdownRendererInlineMath{λ (x : T).t}.\markdownRendererSoftLineBreak
{}Let us consider the case of the following\markdownRendererSoftLineBreak
{}reduction:\begin{mathpar}
  λ (x : T).t →_β λ (x : T').t.
\end{mathpar}\markdownRendererInterblockSeparator
{}To complete this case, we have to prove\markdownRendererSoftLineBreak
{}that \markdownRendererInlineMath{Γ, x : T' ⊢ t : U} given that\markdownRendererSoftLineBreak
{}\markdownRendererInlineMath{Γ, x : T ⊢ t : U}.\markdownRendererSoftLineBreak
{}By appropriately strengthening the statement,\markdownRendererSoftLineBreak
{}we get that it is enough to prove admissibility of\markdownRendererSoftLineBreak
{}the following rule called \markdownRendererEmphasis{context reduction}:\begin{mathpar}
  \dfrac{Γ, x : U, Δ ⊢ t : T \quad
         ⊢ Γ, x : U, Δ \quad
         U →_β U'}
        {Γ, x : U', Δ ⊢ t : T}
\end{mathpar}\markdownRendererParagraphSeparator
{}The proof is, again, by induction on the typing\markdownRendererSoftLineBreak
{}derivation.\markdownRendererSoftLineBreak
{}The only non-trivial case is with \textsc{Var}\markdownRendererSoftLineBreak
{}rule used on \markdownRendererInlineMath{x}; it is taken care of by\markdownRendererSoftLineBreak
{}the appropriate use of \textsc{Conv} rule to convert\markdownRendererSoftLineBreak
{}\markdownRendererInlineMath{x : U'} to \markdownRendererInlineMath{x : U}.\markdownRendererSoftLineBreak
{}To use conversion, we have to prove that \markdownRendererInlineMath{U} is\markdownRendererSoftLineBreak
{}well-typed; thankfully, it follows from\markdownRendererSoftLineBreak
{}\markdownRendererInlineMath{⊢ Γ, x : U, Δ}.\markdownRendererParagraphSeparator
{}Note that context reduction is admissible in\markdownRendererSoftLineBreak
{}arbitrary PTS!\markdownRendererSoftLineBreak
{}We do not need it anywhere else, though,\markdownRendererSoftLineBreak
{}so we did not state it as a separate lemma.\end{proof}\markdownRendererInterblockSeparator
{}Characterization of definitional equality,\markdownRendererSoftLineBreak
{}uniqueness of typing and subject reduction\markdownRendererSoftLineBreak
{}together imply the following corollary we are\markdownRendererSoftLineBreak
{}going to use a few times in our normalization\markdownRendererSoftLineBreak
{}proof:\markdownRendererInterblockSeparator
{}\begin{corollary}[Type stability]In a total functional PTS, the following holds:\begin{mathpar}
  \left\{
  \begin{array}{l}
    Γ ⊢ t : T\\
    Γ ⊢ t' : T'\\ ⊢ Γ \\
    t =_β t'
  \end{array}\right. \quad ⇒ \quad
      T =_β T'.
\end{mathpar}\end{corollary}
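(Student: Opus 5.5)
By the characterization of $(=_β)$, from $t =_β t'$ we obtain a common reduct $v$ with $t →_β^∗ v$ and $t' →_β^∗ v$. The plan is to move both typing judgements down to this common term and then apply uniqueness of typing there.

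First, iterate subject reduction along each reduction sequence. By induction on the length of $t →_β^∗ v$, using $⊢ Γ$ at every step (the context never changes, so well-formedness is preserved trivially), we get $Γ ⊢ v : T$. In exactly the same way, $t' →_β^∗ v$ gives $Γ ⊢ v : T'$. Totality of the PTS is what licenses subject reduction here.

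Second, we now have two typings of the single term $v$ in the same context $Γ$. Functionality of the PTS allows us to apply uniqueness of typing, which yields $T =_β T'$ directly.

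I expect no real obstacle. The only point requiring care is the routine lifting of one-step subject reduction to $(→_β^∗)$ by induction on the reduction sequence. One must also check that both hypotheses, totality (for subject reduction, via Theorem~\ref{type-is-correct}) and functionality (for uniqueness of typing), are in fact available, and they are by the assumption that the PTS is total functional.
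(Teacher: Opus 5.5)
Your proof is correct and matches the paper's argument. The paper derives this corollary directly from the characterization of $(=_\beta)$, subject reduction (iterated along $t \to_\beta^* v$ and $t' \to_\beta^* v$ in the fixed well-formed context $\Gamma$), and uniqueness of typing applied to the common reduct $v$.
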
\markdownRendererInterblockSeparator
{}
\markdownRendererSectionEnd \markdownRendererSectionBegin
\markdownRendererHeadingTwo{Properties of strongly normalizing terms}\markdownRendererInterblockSeparator
{}We are almost done with classical results!\markdownRendererSoftLineBreak
{}What is left now is to state which terms are\markdownRendererSoftLineBreak
{}strongly normalizing and collect properties of\markdownRendererSoftLineBreak
{}s.n.\markdownRendererSoftLineBreak
{}terms which allow our soundness proof to go\markdownRendererSoftLineBreak
{}through.\markdownRendererParagraphSeparator
{}Informally, a term is strongly normalizing iff\markdownRendererSoftLineBreak
{}there are no infinite reduction chains starting\markdownRendererSoftLineBreak
{}from it.\markdownRendererSoftLineBreak
{}In constructive setting and in formalization,\markdownRendererSoftLineBreak
{}the following definition is a bit better \markdownRendererCite{2}+{}{}{altenkirch1993formalization}+{}{}{vanraamsdonk1999perpetual}:\markdownRendererInterblockSeparator
{}\begin{definition}[Strongly normalizing term]\label{sn-definition}Set of s.n.\markdownRendererSoftLineBreak
{}terms is the smallest set \markdownRendererInlineMath{\mathsf{sn}} in which\markdownRendererSoftLineBreak
{}the following rule is valid:\begin{mathpar}
  \dfrac{∀ t', (t →_β t') ⇒ t' ∈ \mathsf{sn}}{t ∈ \mathsf{sn}}.
\end{mathpar}\markdownRendererInterblockSeparator
{}In other words, a term is strongly normalizing iff\markdownRendererSoftLineBreak
{}all possible reductions from it are strongly\markdownRendererSoftLineBreak
{}normalizing.\end{definition}\markdownRendererInterblockSeparator
{}Thus, terms which cannot be reduced further (also\markdownRendererSoftLineBreak
{}known as terms \markdownRendererEmphasis{in normal form}) are strongly\markdownRendererSoftLineBreak
{}normalizing; then, terms which directly reduce\markdownRendererSoftLineBreak
{}into normal forms are also strongly normalizing;\markdownRendererSoftLineBreak
{}and so on.\markdownRendererSoftLineBreak
{}Therefore, by this definition,\markdownRendererSoftLineBreak
{}strongly normalizing terms are those that every\markdownRendererSoftLineBreak
{}reduction chain starting from them is finite.\markdownRendererParagraphSeparator
{}The following properties are then provable by\markdownRendererSoftLineBreak
{}definition of \markdownRendererInlineMath{\mathsf{sn}} and by structural\markdownRendererSoftLineBreak
{}induction on available derivations of\markdownRendererSoftLineBreak
{}\markdownRendererInlineMath{\mathsf{sn}} for terms in question:\markdownRendererInterblockSeparator
{}\begin{lemma}[Structural properties of $\mathsf{sn}$ \proof{rocq:SN.v}]\label{sn-props}\begin{mathpar}
  \inferrule{ }{x, s ∈ \mathsf{sn}}\and
  \inferrule
    {T ∈ \mathsf{sn} \\ U ∈ \mathsf{sn}}
    {Π (x : T).U ∈ \mathsf{sn}}\and
  \inferrule
    {T ∈ \mathsf{sn} \\ t ∈ \mathsf{sn}}
    {λ (x : T).t ∈ \mathsf{sn}}\and
  \inferrule{t\,u ∈ \mathsf{sn}}{t ∈ \mathsf{sn}}\and
  \inferrule{t[u/x] ∈ \mathsf{sn}}{t ∈ \mathsf{sn}}
\end{mathpar}\end{lemma}\markdownRendererInterblockSeparator
{}Application, of course, is the only nontrivial\markdownRendererSoftLineBreak
{}case.\markdownRendererSoftLineBreak
{}In the case of \markdownRendererInlineMath{β}-reduction, strong normalization of\markdownRendererSoftLineBreak
{}both \markdownRendererInlineMath{(λ (x : T).t)} and \markdownRendererInlineMath{u} does not imply\markdownRendererSoftLineBreak
{}anything about normalization behaviour of\markdownRendererSoftLineBreak
{}\markdownRendererInlineMath{t[u/x]}: consider a classical example from\markdownRendererSoftLineBreak
{}untyped lambda calculus \markdownRendererCite{1}+{}{}{barendregt1984lambda},\markdownRendererSoftLineBreak
{}\markdownRendererInlineMath{ω = (λ (x : T).x\,x)} and \markdownRendererInlineMath{Ω = ω \,ω};\markdownRendererSoftLineBreak
{}while not typable, \markdownRendererInlineMath{ω} is strongly\markdownRendererSoftLineBreak
{}normalizing, while \markdownRendererInlineMath{Ω} decidedly is not.\markdownRendererParagraphSeparator
{}The situation changes when the head of the application is\markdownRendererSoftLineBreak
{}not an abstraction at all.\markdownRendererSoftLineBreak
{}This leads us to the definition of an \markdownRendererEmphasis{atomic\markdownRendererSoftLineBreak
{}term}.\markdownRendererInterblockSeparator
{}\begin{definition}[Atomic term]The set of atomic terms is the smallest set\markdownRendererSoftLineBreak
{}that satisfies the following conditions:\markdownRendererInterblockSeparator
{}\markdownRendererOlBegin
\markdownRendererOlItemWithNumber{1}\markdownRendererInlineMath{s}, \markdownRendererInlineMath{Π (x : S).T} and \markdownRendererInlineMath{y} are atomic for all\markdownRendererSoftLineBreak
{}variables \markdownRendererInlineMath{x},\markdownRendererInlineMath{y}, sort \markdownRendererInlineMath{s} and types \markdownRendererInlineMath{S},\markdownRendererInlineMath{T};\markdownRendererOlItemEnd 
\markdownRendererOlItemWithNumber{2}If \markdownRendererInlineMath{t} is atomic, then \markdownRendererInlineMath{t\,u} is also atomic.\markdownRendererOlItemEnd 
\markdownRendererOlEnd \end{definition}\markdownRendererInterblockSeparator
{}Atomic terms are useful precisely because their\markdownRendererSoftLineBreak
{}strong normalization survives application:\markdownRendererInterblockSeparator
{}\begin{lemma}\label{sn-atomic}If \markdownRendererInlineMath{t, u ∈ \mathsf{sn}} and \markdownRendererInlineMath{t} is atomic,\markdownRendererSoftLineBreak
{}then \markdownRendererInlineMath{t\,u ∈ \mathsf{sn}}.\;\proof{rocq:SN.v\#L78}\end{lemma}\markdownRendererInterblockSeparator
{}\begin{proof}By induction on derivations of \markdownRendererInlineMath{\mathsf{sn}} for\markdownRendererSoftLineBreak
{}\markdownRendererInlineMath{t} and \markdownRendererInlineMath{u}.\markdownRendererSoftLineBreak
{}Consider possible redexes in \markdownRendererInlineMath{t\,u}:\markdownRendererSoftLineBreak
{}\markdownRendererInlineMath{t\,u} is not itself a redex since \markdownRendererInlineMath{t} is atomic\markdownRendererSoftLineBreak
{}and hence not a lambda; the only possible redexes\markdownRendererSoftLineBreak
{}are either inside \markdownRendererInlineMath{t} or inside \markdownRendererInlineMath{u}.\markdownRendererSoftLineBreak
{}Applying corresponding induction hypotheses,\markdownRendererSoftLineBreak
{}we reach the desired result.\markdownRendererSoftLineBreak
{}The only caveat is that, for induction to go\markdownRendererSoftLineBreak
{}through, we need to check that atomicity of \markdownRendererInlineMath{t} is\markdownRendererSoftLineBreak
{}preserved by reduction; this is indeed true by\markdownRendererSoftLineBreak
{}induction on derivation of atomicity for \markdownRendererInlineMath{t}.\end{proof}\markdownRendererInterblockSeparator
{}A dual view on strong normalization of application\markdownRendererSoftLineBreak
{}is the following: if we already know that\markdownRendererSoftLineBreak
{}\markdownRendererInlineMath{t[u/x] ∈ \mathsf{sn}}, then the most tricky case\markdownRendererSoftLineBreak
{}is already taken care of.\markdownRendererInterblockSeparator
{}\begin{lemma}[Weak head expansion \cite{joachimski2003short, vanraamsdonk1995normalisation, altenkirch1993constructions}\;\proof{rocq:SN.v\#L167}]\label{sn-whe}\begin{mathpar}
  \left\{
  \begin{array}{l}
    t[u/x]u_1… u_n ∈ \mathsf{sn} \\
    U, u ∈ \mathsf{sn}
  \end{array}\right. ⇒ (λ (x : U).t)u u_1… u_n ∈ \mathsf{sn}.
\end{mathpar}\end{lemma}\markdownRendererInterblockSeparator
{}\begin{proof}By induction on \markdownRendererInlineMath{n}.\markdownRendererInterblockSeparator
{}\markdownRendererUlBegin
\markdownRendererUlItem \markdownRendererInlineMath{n = 0}.\markdownRendererSoftLineBreak
{}We need to prove that if\markdownRendererSoftLineBreak
{}\markdownRendererInlineMath{t[u/x], u, U ∈ \mathsf{sn}},\markdownRendererSoftLineBreak
{}then \markdownRendererInlineMath{(λ (x : U).t)u ∈ \mathsf{sn}}.\markdownRendererSoftLineBreak
{}Note that \markdownRendererInlineMath{t[u/x] ∈ \mathsf{sn}} implies\markdownRendererSoftLineBreak
{}\markdownRendererInlineMath{t ∈ \mathsf{sn}}; then, by structural induction\markdownRendererSoftLineBreak
{}on derivations of \markdownRendererInlineMath{\mathsf{sn}} for \markdownRendererInlineMath{t, u, U} we\markdownRendererSoftLineBreak
{}get what we want by considering all possible\markdownRendererSoftLineBreak
{}reductions.\markdownRendererUlItemEnd 
\markdownRendererUlItem \markdownRendererInlineMath{n → (n + 1)}.\markdownRendererSoftLineBreak
{}By the induction hypothesis, it is sufficient to show\markdownRendererSoftLineBreak
{}the following:\markdownRendererParagraphSeparator
{}\markdownRendererDisplayMath{\left\{
\begin{array}{l}
(λ (x : U).t)u u_1… u_n ∈ \mathsf{sn} \\
t[u/x]u_1… u_{n + 1} ∈ \mathsf{sn}
\end{array}\right. ⇒ (λ (x : U).t)u u_1… u_n u_{n + 1} ∈ \mathsf{sn}.
}\markdownRendererSoftLineBreak
{}This is true by structural induction on\markdownRendererSoftLineBreak
{}available evidence.\vspace{-1em}\markdownRendererUlItemEnd 
\markdownRendererUlEnd \end{proof}\markdownRendererInterblockSeparator
{}
\markdownRendererSectionEnd 
\markdownRendererSectionEnd \markdownRendererSectionBegin
\markdownRendererHeaderAttributeContextBegin
\markdownRendererAttributeIdentifier{rk-sn}\markdownRendererHeadingOne{Strong normalization of \PTSrk{}}\markdownRendererHeaderAttributeContextEnd{}\markdownRendererInterblockSeparator
{}\begin{theorem}[Strong normalization of \PTSrk{}\;\proof{rocq:Ranks.v\#L217}]\label{normalization}\begin{mathpar}
  Γ ⊢ t : T\, ∧ ⊢ Γ ⇒ t ∈ \mathsf{sn}.
\end{mathpar}\end{theorem}\markdownRendererInterblockSeparator
{}\begin{proof}By induction on the typing derivation.\markdownRendererSoftLineBreak
{}Case \textsc{Conv} is trivial; due to Lemma \markdownRendererLink{\markdownRendererHash{}sn-props}{\markdownRendererHash{}sn-props}{#sn-props}{},\markdownRendererSoftLineBreak
{}cases \textsc{Ax}, \textsc{Rule},\markdownRendererSoftLineBreak
{}\textsc{Var} and \textsc{Abs} present no problem\markdownRendererSoftLineBreak
{}as well.\markdownRendererSoftLineBreak
{}The only problematic case is application:\begin{mathpar}
  \left\{
  \begin{array}{l}
    Γ ⊢ u : Π (x : T).U \\
    Γ ⊢ t : T \\ ⊢ Γ \\
    t, u ∈ \mathsf{sn}
  \end{array}\right. ⇒ u\,t ∈ \mathsf{sn}
\end{mathpar}\markdownRendererInterblockSeparator
{}Now, here is the part where stratification\markdownRendererSoftLineBreak
{}according to ranks comes in useful.\markdownRendererSoftLineBreak
{}As typing yields the correct type, there is some \markdownRendererInlineMath{r}\markdownRendererSoftLineBreak
{}such that \markdownRendererInlineMath{Γ ⊢ Π (x : T).U : \rk{r}}.\markdownRendererSoftLineBreak
{}Let us explicitly add this to the list of\markdownRendererSoftLineBreak
{}premises; we will call the following proposition\markdownRendererSoftLineBreak
{}\markdownRendererEmphasis{Application Lemma}:\begin{mathpar}
  \mathrm{AL}_r ≜ \quad
         \text{“}\left\{
  \begin{array}{l}
    Γ ⊢ Π (x : T).U : \rk{r}\\
    Γ ⊢ u : Π (x : T).U \\
    Γ ⊢ t : T \\ ⊢ Γ \\
    t, u ∈ \mathsf{sn}
  \end{array}\right. ⇒ u\,t ∈ \mathsf{sn}\text{”}
\end{mathpar}\markdownRendererInterblockSeparator
{}What indexing by \markdownRendererInlineMath{r} bought us is that now we can\markdownRendererSoftLineBreak
{}try proving this lemma by induction on \markdownRendererInlineMath{r}.\markdownRendererSoftLineBreak
{}Again, as it always is with strong normalization\markdownRendererSoftLineBreak
{}of application, the problem is in case where\markdownRendererSoftLineBreak
{}the application head is a lambda and a \markdownRendererInlineMath{β}-reduction\markdownRendererSoftLineBreak
{}takes place.\markdownRendererSoftLineBreak
{}This calls for another lemma,\markdownRendererSoftLineBreak
{}\markdownRendererEmphasis{Substitution Lemma}:\begin{mathpar}
  \mathrm{SL}_r ≜ \quad
         \text{“}\left\{
  \begin{array}{l}
    Γ, x : T, Δ ⊢ u : U\\
    Γ ⊢ T : \rk{r}\\
    Γ ⊢ t : T\\ ⊢ Γ \\
    t, u ∈ \mathsf{sn}
  \end{array}\right. ⇒ u[t/x] ∈ \mathsf{sn}\text{”}
\end{mathpar}\markdownRendererInterblockSeparator
{}A proof of \markdownRendererInlineMath{\mathrm{AL}_r} for all \markdownRendererInlineMath{r} (which is\markdownRendererSoftLineBreak
{}what we are after) is achieved through complete\markdownRendererSoftLineBreak
{}mutual induction with \markdownRendererInlineMath{\mathrm{SL}_r}.\markdownRendererSoftLineBreak
{}Thus, there are two main parts:\markdownRendererInterblockSeparator
{}\markdownRendererUlBegin
\markdownRendererUlItem \markdownRendererInlineMath{(∀ s < r.\;\mathrm{SL}_s) ⇒ \mathrm{AL}_r}:\markdownRendererSoftLineBreak
{}by induction on derivations of\markdownRendererSoftLineBreak
{}\markdownRendererInlineMath{u, t ∈ \mathsf{sn}}.\markdownRendererSoftLineBreak
{}Consider possible reductions of \markdownRendererInlineMath{u\,t}:\markdownRendererSoftLineBreak
{}most of them happen either in \markdownRendererInlineMath{u} or in \markdownRendererInlineMath{t},\markdownRendererSoftLineBreak
{}so that results of reductions from \markdownRendererInlineMath{u\,t} are\markdownRendererSoftLineBreak
{}strongly normalizing by inductive hypotheses.\markdownRendererSoftLineBreak
{}The only problematic case is when\markdownRendererSoftLineBreak
{}\markdownRendererInlineMath{u = λ (x : T').v} and the reduction is\markdownRendererSoftLineBreak
{}\markdownRendererInlineMath{β}-reduction.\markdownRendererSoftLineBreak
{}By inversion of typing for \markdownRendererInlineMath{u}, it should be the\markdownRendererSoftLineBreak
{}case that \markdownRendererInlineMath{T' =_β T}; and by inversion of typing\markdownRendererSoftLineBreak
{}for \markdownRendererInlineMath{Π (x : T).U}, we get that \markdownRendererInlineMath{T : \rk{s}} and\markdownRendererSoftLineBreak
{}\markdownRendererInlineMath{U : \rk{t}} such that \markdownRendererInlineMath{r = \max(s + 1, t)}.\markdownRendererSoftLineBreak
{}Then, by type stability, \markdownRendererInlineMath{T' : \rk{s}},\markdownRendererSoftLineBreak
{}and furthermore, \markdownRendererInlineMath{s < r}, so we can apply\markdownRendererSoftLineBreak
{}a substitution lemma.\markdownRendererUlItemEnd 
\markdownRendererUlItem \markdownRendererInlineMath{(∀ s ≤ r.\;\mathrm{AL}_s) ⇒ \mathrm{SL}_r}.\markdownRendererSoftLineBreak
{}So we need to prove \markdownRendererInlineMath{u[t/x] ∈ \mathsf{sn}}.\markdownRendererSoftLineBreak
{}We do it by simultaneous induction on derivation\markdownRendererSoftLineBreak
{}of \markdownRendererInlineMath{u ∈ \mathsf{sn}} and structural induction on\markdownRendererSoftLineBreak
{}\markdownRendererInlineMath{u}.\markdownRendererSoftLineBreak
{}As substitution propagates through \markdownRendererInlineMath{u},\markdownRendererSoftLineBreak
{}most cases are satisfied by Lemma \markdownRendererLink{\markdownRendererHash{}sn-props}{\markdownRendererHash{}sn-props}{#sn-props}{}.\markdownRendererSoftLineBreak
{}A funny case is when \markdownRendererInlineMath{u = x}: then,\markdownRendererSoftLineBreak
{}a premise that \markdownRendererInlineMath{t ∈ \mathsf{sn}} helps.\markdownRendererInterblockSeparator
{}The only problematic case is\markdownRendererSoftLineBreak
{}application \markdownRendererInlineMath{u = u_1\,u_2}; we consider three\markdownRendererSoftLineBreak
{}cases here:\markdownRendererInterblockSeparator
{}\markdownRendererUlBegin
\markdownRendererUlItem \markdownRendererInlineMath{u_1[t/x]} is atomic — handled by Lemma \markdownRendererLink{\markdownRendererHash{}sn-props}{\markdownRendererHash{}sn-props}{#sn-props}{}.\markdownRendererUlItemEnd 
\markdownRendererUlItem \markdownRendererInlineMath{u_1} is not atomic — by virtue of it being\markdownRendererSoftLineBreak
{}well-typed, it means that\markdownRendererSoftLineBreak
{}\markdownRendererInlineMath{u = (λ (x : V).u')v_1 … v_k u_2}.\markdownRendererSoftLineBreak
{}All possible reductions from \markdownRendererInlineMath{u[t/x]} are then\markdownRendererSoftLineBreak
{}handled by induction hypotheses.\markdownRendererUlItemEnd 
\markdownRendererUlItem \markdownRendererInlineMath{u_1} is atomic, but \markdownRendererInlineMath{u_1[t/x]} is not — by\markdownRendererSoftLineBreak
{}virtue of \markdownRendererInlineMath{u_1} being well-typed,\markdownRendererSoftLineBreak
{}the only case when this is possible is when\markdownRendererSoftLineBreak
{}\markdownRendererInlineMath{u_1 = x v_1… v_k}, thus\markdownRendererSoftLineBreak
{}\markdownRendererInlineMath{u[t/x] = t (v_1[t/x])… (v_k[t/x])(u_2[t/x])}.\markdownRendererSoftLineBreak
{}From induction hypotheses we know that all\markdownRendererSoftLineBreak
{}\markdownRendererInlineMath{v_i[t/x]} and \markdownRendererInlineMath{u_2[t/x]} are s.n.;\markdownRendererSoftLineBreak
{}\markdownRendererInlineMath{t} is s.n.\markdownRendererSoftLineBreak
{}by premise and its rank is \markdownRendererInlineMath{r}.\markdownRendererSoftLineBreak
{}Then \markdownRendererInlineMath{t v_1[t/x]} is s.n.\markdownRendererSoftLineBreak
{}by application lemma for \markdownRendererInlineMath{r};\markdownRendererSoftLineBreak
{}but note that rank of \markdownRendererInlineMath{t v_1[t/x]} is not\markdownRendererSoftLineBreak
{}greater than \markdownRendererInlineMath{r} so application lemma applies\markdownRendererSoftLineBreak
{}again.\markdownRendererSoftLineBreak
{}Thus, by induction on \markdownRendererInlineMath{k}, the whole chain of\markdownRendererSoftLineBreak
{}applications is s.n.\vspace{-1em}\markdownRendererUlItemEnd 
\markdownRendererUlEnd \markdownRendererUlItemEnd 
\markdownRendererUlEnd \end{proof}\markdownRendererInterblockSeparator
{}\vspace{-.5em}\markdownRendererSoftLineBreak
{}This is a retelling of a proof that our friend\markdownRendererSoftLineBreak
{}Aleksandr Levin got after half-jokingly entering\markdownRendererSoftLineBreak
{}the following prompt into GPT-6 Pro:\markdownRendererInterblockSeparator
{}\markdownRendererBlockQuoteBegin
Consider the following pure type system:\markdownRendererInterblockSeparator
{}\markdownRendererInputFencedCode{./_markdown_PTS-rk/e312f8a03844e0000f76be092ef72e2d.verbatim}{text}{text}\markdownRendererBlockQuoteEnd \markdownRendererInterblockSeparator
{}\markdownRendererBlockQuoteBegin
Is it strongly normalizing or not?\markdownRendererBlockQuoteEnd \markdownRendererInterblockSeparator
{}After approximately 20 minutes,\markdownRendererSoftLineBreak
{}LLM outputted the proof which we retold above.\markdownRendererSoftLineBreak
{}Looking at it in retrospect, it seems as if we\markdownRendererSoftLineBreak
{}stratified functions by ranks with intent for such\markdownRendererSoftLineBreak
{}a direct proof to go through; honestly,\markdownRendererSoftLineBreak
{}we did not think of that and were eagerly trying\markdownRendererSoftLineBreak
{}to adapt well-known approaches for proving\markdownRendererSoftLineBreak
{}normalization of pure type systems employing\markdownRendererSoftLineBreak
{}either saturated sets or candidates of\markdownRendererSoftLineBreak
{}reducibility \markdownRendererCite{4}+{}{}{tait1967intensional}+{}{}{girard1972interpretation}+{}{}{girard1989proofs}+{}{}{geuvers1991modular},\markdownRendererSoftLineBreak
{}which did not work because our “hierarchy” of\markdownRendererSoftLineBreak
{}ranks is not well-founded enough because of axioms\markdownRendererSoftLineBreak
{}\markdownRendererInlineMath{\rk{n} : \rk{0}}.\markdownRendererSoftLineBreak
{}This proof, however, does not even bother with it,\markdownRendererSoftLineBreak
{}which means that we could pick \markdownRendererEmphasis{any} relation as\markdownRendererSoftLineBreak
{}\markdownRendererInlineMath{\mathsf{Ax}}.\markdownRendererSoftLineBreak
{}Nevertheless, putting all sorts into \markdownRendererInlineMath{\rk{0}}\markdownRendererSoftLineBreak
{}seems the most sensible approach to us since,\markdownRendererSoftLineBreak
{}informally, sort is kind of a constant which can\markdownRendererSoftLineBreak
{}not be converted into a function.\markdownRendererInterblockSeparator
{}\markdownRendererSectionBegin
\markdownRendererHeaderAttributeContextBegin
\markdownRendererAttributeIdentifier{formalization}\markdownRendererHeadingTwo{Formalizing normalization of \PTSrk{} in Rocq}\markdownRendererHeaderAttributeContextEnd{}\markdownRendererInterblockSeparator
{}Given the high technicality of the proof for\markdownRendererSoftLineBreak
{}strong normalization which involves wrangling of\markdownRendererSoftLineBreak
{}unusual definition for strong normalization and\markdownRendererSoftLineBreak
{}complex induction patterns (and given\markdownRendererSoftLineBreak
{}that proof was supplied by the probabilistic\markdownRendererSoftLineBreak
{}machine), we decided to formalize it so that a\markdownRendererSoftLineBreak
{}sound simple algorithm would check it for us.\markdownRendererSoftLineBreak
{}One of the natural choices for formalization of\markdownRendererSoftLineBreak
{}programming languages is Rocq \markdownRendererCite{2}+{}{}{rocq2025manual}+{}{}{bertot2004interactive},\markdownRendererSoftLineBreak
{}an interactive theorem prover based on\markdownRendererSoftLineBreak
{}Polymorphic, Cumulative Calculus of Inductive\markdownRendererSoftLineBreak
{}Constructions \markdownRendererCite{2}+{}{}{sozeau2014universe}+{}{}{timany2018cumulative}.\markdownRendererSoftLineBreak
{}As is the case with similar typed calculi,\markdownRendererSoftLineBreak
{}it is both a programming language and a\markdownRendererSoftLineBreak
{}higher-order logic, and correctness of a proof is\markdownRendererSoftLineBreak
{}checked by the type checker; in addition to that,\markdownRendererSoftLineBreak
{}Rocq features powerful metaprogramming facilities\markdownRendererSoftLineBreak
{}which allow writing custom \markdownRendererEmphasis{tactics} — programs\markdownRendererSoftLineBreak
{}written in a meta-language which generate terms of\markdownRendererSoftLineBreak
{}the target language, greatly simplifying the\markdownRendererSoftLineBreak
{}proofs.\markdownRendererSoftLineBreak
{}While set-theoretical semantics of Rocq assume the\markdownRendererSoftLineBreak
{}existence of countable hierarchy of inaccessible\markdownRendererSoftLineBreak
{}cardinals \markdownRendererCite{2}+{}{}{werner1997sets}+{}{}{timany2017consistency},\markdownRendererSoftLineBreak
{}our formalization never delves into the hierarchy\markdownRendererSoftLineBreak
{}and can be formalized in a much more restricted\markdownRendererSoftLineBreak
{}theory.\markdownRendererParagraphSeparator
{}To formalize our strong normalization\markdownRendererSoftLineBreak
{}proof, we first need to provide a general\markdownRendererSoftLineBreak
{}metatheory of PTS as laid out in Section \markdownRendererLink{\markdownRendererHash{}pts-metatheory}{\markdownRendererHash{}pts-metatheory}{#pts-metatheory}{}.\markdownRendererSoftLineBreak
{}We could reuse existing projects on PTS\markdownRendererSoftLineBreak
{}formalizations such as \markdownRendererCite{4}+{}{}{siles2012pure}+{}{}{adams2006pure}+{}{}{barras1997coqencoq}+{}{}{felicissimo2024generic}\markdownRendererSoftLineBreak
{}but we wanted to explore the topic ourselves, which\markdownRendererSoftLineBreak
{}led to a few minor differences discussed\markdownRendererSoftLineBreak
{}previously; thus we started from scratch.\markdownRendererParagraphSeparator
{}When one formalizes syntax with binders in a proof\markdownRendererSoftLineBreak
{}assistant, one of the most important decisions is\markdownRendererSoftLineBreak
{}how to represent binders.\markdownRendererSoftLineBreak
{}Here we went with making our terms indexed by the\markdownRendererSoftLineBreak
{}number of free variables it uses,\markdownRendererSoftLineBreak
{}a standard technique originating from \markdownRendererCite{3}+{}{}{bellegarde1994substitution}+{}{}{bird1999debruijn}+{}{}{altenkirch1999monadic}:\markdownRendererInterblockSeparator
{}\markdownRendererInputFencedCode{./_markdown_PTS-rk/e6f5e5ccf96f4f2b05491e864c2345ab.verbatim}{coq}{coq}\markdownRendererInterblockSeparator
{}(Inside our formalization project,\markdownRendererSoftLineBreak
{}this definition is located here: \proof{rocq:Syntax.v\#L7})\markdownRendererParagraphSeparator
{}Here, \markdownRendererInlineMath{L} is a type of sorts (\markdownRendererInlineMath{L} comes from\markdownRendererSoftLineBreak
{}\markdownRendererEmphasis{level} which is a name used for it in universe\markdownRendererSoftLineBreak
{}hierarchies), and \markdownRendererInlineMath{n} is a number of free\markdownRendererSoftLineBreak
{}variables.\markdownRendererSoftLineBreak
{}Entering a new scope (such as in \markdownRendererInlineMath{Π} and in\markdownRendererSoftLineBreak
{}\markdownRendererInlineMath{\mathtt{lam}}) adds one new free variable;\markdownRendererSoftLineBreak
{}variables, then, have to be numbers in range\markdownRendererSoftLineBreak
{}\markdownRendererInlineMath{[0;n)}, which are usually represented by a\markdownRendererSoftLineBreak
{}\markdownRendererInlineMath{\mathtt{Fin}} type.\markdownRendererParagraphSeparator
{}What is convenient in this representation is that\markdownRendererSoftLineBreak
{}it makes it harder to make mistakes when wrangling\markdownRendererSoftLineBreak
{}with context, defining substitution etc. — all\markdownRendererSoftLineBreak
{}the technical details which make working with\markdownRendererSoftLineBreak
{}syntax with binders so hard (authors of the first\markdownRendererSoftLineBreak
{}implementation of such calculi, AUTOMATH \markdownRendererCite{2}+{}{}{debruijn1972lambda}+{}{}{debruijn1970automath},\markdownRendererSoftLineBreak
{}remarked it being notoriously hard to get right).\markdownRendererSoftLineBreak
{}Also, it has a certain category-theoretical feel to\markdownRendererSoftLineBreak
{}it: sufficiently generalized operations of\markdownRendererSoftLineBreak
{}renaming and substitution give the typal\markdownRendererSoftLineBreak
{}representation of syntax a structure of a\markdownRendererSoftLineBreak
{}\markdownRendererEmphasis{relative monad} \markdownRendererCite{2}+{}{}{altenkirch2015monads}+{}{}{altenkirch1999monadic}.\markdownRendererSoftLineBreak
{}It is really hard to go wrong when implementing\markdownRendererSoftLineBreak
{}these operations (and the natural implementation\markdownRendererSoftLineBreak
{}turns variables into de Bruijn indices \markdownRendererCite{1}+{}{}{debruijn1972lambda});\markdownRendererSoftLineBreak
{}on top of them, the usual substitution is easily\markdownRendererSoftLineBreak
{}definable.\markdownRendererParagraphSeparator
{}Most of the proof development in Rocq mirrors what\markdownRendererSoftLineBreak
{}is written in Section \markdownRendererLink{\markdownRendererHash{}pts-metatheory}{\markdownRendererHash{}pts-metatheory}{#pts-metatheory}{}; in fact,\markdownRendererSoftLineBreak
{}we first wrote a proof in Rocq and then retold it\markdownRendererSoftLineBreak
{}in written English here, omitting tedious\markdownRendererSoftLineBreak
{}technicalities.\markdownRendererSoftLineBreak
{}The only considerable difference is in the handling of\markdownRendererSoftLineBreak
{}the set of strongly normalizing terms since\markdownRendererSoftLineBreak
{}the proof of Substitution lemma given\markdownRendererSoftLineBreak
{}above does not follow the structure of inductive\markdownRendererSoftLineBreak
{}definition \markdownRendererLink{\markdownRendererHash{}sn-definition}{\markdownRendererHash{}sn-definition}{#sn-definition}{}.\markdownRendererSoftLineBreak
{}Instead, we use a syntax-directed characterization\markdownRendererSoftLineBreak
{}of the set of strongly normalizing terms \markdownRendererCite{1}+{}{}{Joachimski2003Short}:\markdownRendererInterblockSeparator
{}\begin{definition}[Syntax-directed characterization of strong normalization \proof{rocq:SN.v\#L175}]Sets of terms \markdownRendererInlineMath{\mathsf{SN}} (strongly\markdownRendererSoftLineBreak
{}normalizing), \markdownRendererInlineMath{\mathsf{SNx}} (strongly normalizing\markdownRendererSoftLineBreak
{}neutral) and a relation \markdownRendererInlineMath{→_\mathsf {WHR}} (weak\markdownRendererSoftLineBreak
{}head reduction) are the least sets where the\markdownRendererSoftLineBreak
{}following rules are valid:\begin{mathpar}
  \inferrule{ }{\mathsf{SNx}\,s}(s ∈ \mathcal{S})
  \and
  \inferrule
    {\mathsf{SN}\,T \\ \mathsf{SN}\,U}
    {\mathsf{SNx}(Π (x : T).U)}
  \and
  \inferrule{ }{\mathsf{SNx}\,x}
  \and
  \inferrule
    {\mathsf{SNx}\,t \\ \mathsf{SN}\,u}
    {\mathsf{SNx}\,(t\,u)}
  \\
  \inferrule{\mathsf{SNx}\,t}{\mathsf{SN}\,t}
  \and
  \inferrule
    {\mathsf{SN}\,T \\ \mathsf{SN}\,t}
    {\mathsf{SN}\,(λ (x : T).t)}
  \and
  \inferrule
    {t →_\mathsf {WHR}u \\ \mathsf{SN}\,u}
    {\mathsf{SN}\,t}
  \and
  \inferrule
    {\mathsf{SN}\,U \\ \mathsf{SN}\,u}
    {(λ (x : U).t)\,u →_\mathsf {WHR} t[u/x]}
  \and
  \inferrule{t →_\mathsf {WHR}t'}
            {t\,u →_\mathsf {WHR}t'\,u}
\end{mathpar}\end{definition}\markdownRendererInterblockSeparator
{}In a sense, \markdownRendererInlineMath{\mathsf{SNx}} is a description of\markdownRendererSoftLineBreak
{}atomic strongly normalizing terms;\markdownRendererSoftLineBreak
{}\markdownRendererInlineMath{\mathsf{SN}} adds a case for lambdas and for weak\markdownRendererSoftLineBreak
{}head expansion, covering all possible cases.\markdownRendererSoftLineBreak
{}Existence of such sets is by virtue of them being\markdownRendererSoftLineBreak
{}subsets of the set of terms, so such mutual\markdownRendererSoftLineBreak
{}inductive definition is not problematic.\markdownRendererParagraphSeparator
{}One can prove that set \markdownRendererInlineMath{\mathsf{SN}} is equal to\markdownRendererSoftLineBreak
{}\markdownRendererInlineMath{\mathsf{sn}}; we, however, need only the\markdownRendererSoftLineBreak
{}following:\markdownRendererInterblockSeparator
{}\begin{lemma}[Soundness of $\mathsf{SN}$ \proof{rocq:SN.v\#L222}]\markdownRendererInlineMath{\mathsf{SN} ⊆ \mathsf{sn}}.\end{lemma}\markdownRendererInterblockSeparator
{}\begin{proof}By mutual induction on the evidence\markdownRendererSoftLineBreak
{}\markdownRendererInlineMath{\mathsf{SN}\,t} for arbitrary \markdownRendererInlineMath{t}.\markdownRendererSoftLineBreak
{}To do this, we also have to provide a term-indexed\markdownRendererSoftLineBreak
{}proposition we are proving for \markdownRendererInlineMath{\mathsf{SNx}} and\markdownRendererSoftLineBreak
{}a relation on terms we are proving for\markdownRendererSoftLineBreak
{}\markdownRendererInlineMath{\mathsf{WHR}}:\markdownRendererInterblockSeparator
{}\markdownRendererUlBegin
\markdownRendererUlItem \markdownRendererInlineMath{∀ t, \mathsf{SNx}\,t ⇒ \mathsf{sn}\,t}.\markdownRendererUlItemEnd 
\markdownRendererUlItem \markdownRendererInlineMath{∀ t\,t', \,(t →_\mathsf {WHR}t') ⇒ (t →_\mathsf {WHR - sn}t')}\markdownRendererSoftLineBreak
{}(where the last relation is defined in the same\markdownRendererSoftLineBreak
{}way as WHR, but the head contains evidence of\markdownRendererSoftLineBreak
{}\markdownRendererInlineMath{\mathsf{sn}} instead of \markdownRendererInlineMath{\mathsf{SN}}.)\markdownRendererUlItemEnd 
\markdownRendererUlEnd \markdownRendererInterblockSeparator
{}Proof is extremely simple: by Lemmas \markdownRendererLink{\markdownRendererHash{}sn-props}{\markdownRendererHash{}sn-props}{#sn-props}{},\markdownRendererLink{\markdownRendererHash{}sn-atomic}{\markdownRendererHash{}sn-atomic}{#sn-atomic}{},\markdownRendererLink{\markdownRendererHash{}sn-whe}{\markdownRendererHash{}sn-whe}{#sn-whe}{}.\markdownRendererSoftLineBreak
{}We just have to notice that if the term is\markdownRendererSoftLineBreak
{}\markdownRendererInlineMath{\mathsf{SNx}}, it is atomic,\markdownRendererSoftLineBreak
{}so lemma \markdownRendererLink{\markdownRendererHash{}sn-atomic}{\markdownRendererHash{}sn-atomic}{#sn-atomic}{} applies in the relevant\markdownRendererSoftLineBreak
{}case.\end{proof}\markdownRendererInterblockSeparator
{}Then, to prove strong normalization,\markdownRendererSoftLineBreak
{}it is enough to prove that if a term is typable,\markdownRendererSoftLineBreak
{}it is \markdownRendererInlineMath{\mathsf{SN}}; we only have to state\markdownRendererSoftLineBreak
{}Application Lemma and Substitution Lemma in terms\markdownRendererSoftLineBreak
{}of \markdownRendererInlineMath{\mathsf{SN}}, and the proof goes through.\markdownRendererInterblockSeparator
{}\begin{lemma}\markdownRendererInlineMath{\mathrm{AL}_s} for all \markdownRendererInlineMath{s ≤ r} implies\markdownRendererSoftLineBreak
{}\markdownRendererInlineMath{\mathrm{SL}_r}. \proof{rocq:Ranks.v\#L74}\end{lemma}\markdownRendererInterblockSeparator
{}\begin{proof}We have to prove that \markdownRendererInlineMath{\mathsf{SN}\,t} and\markdownRendererSoftLineBreak
{}\markdownRendererInlineMath{\mathsf{SN}\,u} imply \markdownRendererInlineMath{\mathsf{SN}\,(t[u/x])}.\markdownRendererSoftLineBreak
{}We do so by mutual induction on evidence for \markdownRendererInlineMath{t}\markdownRendererSoftLineBreak
{}with the following propositions proven for\markdownRendererSoftLineBreak
{}\markdownRendererInlineMath{\mathsf{SNx}} and WHR:\markdownRendererInterblockSeparator
{}\markdownRendererUlBegin
\markdownRendererUlItem \markdownRendererInlineMath{\mathsf{SNx}\,t ∧ Γ ⊢ t : T ⇒ \mathsf{SNx}\,(t[u/x]) ∨ (\mathsf{SN}\,(t[u/x]) ∧ ∃ s ≤ r. \;Γ ⊢ T : \rk{s})};\markdownRendererUlItemEnd 
\markdownRendererUlItem \markdownRendererInlineMath{(t →_\mathsf {WHR}t') ⇒ (t[u/x] →_\mathsf {WHR}t'[u/x]) ∧ (\mathsf{SN}\,(t'[u/x]) ⇒ \mathsf{SN}\,(t[u/x]))}.\markdownRendererUlItemEnd 
\markdownRendererUlEnd \markdownRendererInterblockSeparator
{}The proof then follows the non-formal treatment\markdownRendererSoftLineBreak
{}given above.\end{proof}\markdownRendererInterblockSeparator
{}
\markdownRendererSectionEnd 
\markdownRendererSectionEnd \markdownRendererSectionBegin
\markdownRendererHeaderAttributeContextBegin
\markdownRendererAttributeIdentifier{complexity}\markdownRendererHeadingOne{Computational complexity of \PTSrk{}}\markdownRendererHeaderAttributeContextEnd{}\markdownRendererInterblockSeparator
{}In this section, we prove that the convertibility\markdownRendererSoftLineBreak
{}and the type checking of closed, well-typed\markdownRendererSoftLineBreak
{}\PTSrk{} terms is\markdownRendererSoftLineBreak
{}\textsc{TOWER}-complete.\markdownRendererParagraphSeparator
{}(Proofs in this and following sections were not\markdownRendererSoftLineBreak
{}verified in Rocq by any of the authors,\markdownRendererSoftLineBreak
{}but there is a formalization of some results by\markdownRendererSoftLineBreak
{}GPT-6 Astra. Informal proofs, however,\markdownRendererSoftLineBreak
{}are relatively simple, non-technical, and their\markdownRendererSoftLineBreak
{}formalization requires many times more preliminary\markdownRendererSoftLineBreak
{}work than seems reasonable.)\markdownRendererParagraphSeparator
{}As requested further, LLM confessed that it was\markdownRendererSoftLineBreak
{}largely inspired by studies on the complexity of\markdownRendererSoftLineBreak
{}normalization for STLC \markdownRendererCite{4}+{}{}{statman1979typed}+{}{}{schwichtenberg1991upper}+{}{}{beckmann2001exact}+{}{}{nguyen2024simply}.\markdownRendererSoftLineBreak
{}And indeed, upon taking a closer look at the proof\markdownRendererSoftLineBreak
{}of Theorem \markdownRendererLink{\markdownRendererHash{}normalization}{\markdownRendererHash{}normalization}{#normalization}{}, one cannot help but\markdownRendererSoftLineBreak
{}notice it actually being a primitive recursive\markdownRendererSoftLineBreak
{}algorithm \markdownRendererCite{3}+{}{}{skolem1923}+{}{}{kleene1952introduction}+{}{}{peter1951rekursive}\markdownRendererSoftLineBreak
{}for term normalization in disguise.\markdownRendererSoftLineBreak
{}Or something a tad faster:\markdownRendererInterblockSeparator
{}\begin{definition}[$d$-bounded terms]A \PTSrk{} term is \markdownRendererInlineMath{d}-bounded if it is well-typed\markdownRendererSoftLineBreak
{}in some well-formed context, and the heads of all\markdownRendererSoftLineBreak
{}redexes in it have rank equal to or below \markdownRendererInlineMath{d}.\end{definition}\markdownRendererInterblockSeparator
{}\begin{theorem}[Elementary recursive normalization of bounded \PTSrk{} \formalized{rocq-ai:NormalizationComplexity/BoundedNormalization.v\#L19}]\label{bounded-sn}For each \markdownRendererInlineMath{d}, there is an elementary recursive\markdownRendererSoftLineBreak
{}algorithm \markdownRendererCite{1}+{}{}{rose1984subrecursion} for finding a\markdownRendererSoftLineBreak
{}normal form of \markdownRendererInlineMath{d}-bounded terms.\end{theorem}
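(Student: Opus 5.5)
We follow the classical Schwichtenberg/Beckmann analysis of normalization for STLC, with the rank of the head of a redex playing the role of the type level. Define the rank of a redex $(λ (x : V).v)\,w$ as the $r$ with $Γ ⊢ Π (x : V).W : \rk{r}$ for its function type; this is well defined by uniqueness of typing, type stability and equality inversion. Define also the rank of a term as the maximal rank of its redexes. The plan has three steps: show that eliminating redexes of maximal rank creates only redexes of strictly lower rank, give a strategy that removes all of them with an exponential size blow-up, and iterate this $d+1$ times. Since $d$ is fixed, the result is a composition of $d+1$ exponentials, which is elementary. The same composition bounds the cost of computing the normal form.

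\emph{Step 1 (creation of redexes).} Contracting $(λ (x : V).v)\,w$ of rank $r$ can create new redexes in two ways. First, $w$ may be a $λ$ substituted for an $x$ in head position $x\,a$. The new redex then has head type $V =_β Π(y:A).B$, and $Γ ⊢ V : \rk{s}$ with $r = (s+1) ∨ t$, so its rank $s$ is $< r$. Second, the body $v$ may be a $λ$ that becomes applied to a further argument. This redex has head type $W[w/x] : \rk{t}$ with $t ≤ r$, which is not a strict decrease, exactly as in STLC where currying gives $A → B → C$. As in the classical proof, we handle this by taking $λ$-spines $λ \vec x.\,v$ together with full argument lists: we contract the whole spine at once, or equivalently we measure the rank of the uncurried head. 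We then check that the rank of $\Pi$-chains still decreases strictly on each domain. The key computation is that $Π(x_1:A_1)…Π(x_n:A_n).C$ has rank $\max(\mathrm{rk}(A_i)+1, \mathrm{rk}(C))$, so each $A_i$ has rank strictly below it.

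\emph{Step 2 (one level).} Following Schwichtenberg's Lemma, we contract all maximal-rank redexes innermost-first; more precisely, we always pick a rightmost redex of maximal rank $r$ whose argument contains no rank-$r$ redexes. By Step 1, every redex created has rank $< r$. Substituting the redex-free argument $w$ duplicates only subterms without rank-$r$ redexes, so the number of rank-$r$ redexes drops by one at each step. We then bound the size: reducing a term of size $n$ to one of rank $< r$ yields size at most $2^n$, proved by induction on term structure in the standard way. Subject reduction (needing $⊢ Γ$) guarantees that the intermediate terms stay well typed, so ranks remain defined throughout.

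\emph{Step 3 and main obstacle.} Iterating Step 2 for $r = d, d-1, …, 0$ gives a normal form of size at most $2_{d+1}(n)$, with comparably many steps, each polynomial. The one subtlety is rank $0$: a redex of rank $0$ would need $\rk{s+1} ≤ \rk{0}$, which is impossible, so rank-$0$ redexes do not exist and the recursion terminates.

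The main obstacle is Step 1, because the dependent typing complicates it. Ranks are attached to types only up to $=_β$, and the type of the head changes under reduction, for example $W[w/x]$. We rely on type stability and subject reduction to keep ranks invariant along reduction, and on equality inversion to read off the sorts of domains. The $\rk{r} : \rk{0}$ axioms are harmless here, since they never appear as domains of $Π$-types in head position of a redex except with rank bookkeeping as above.
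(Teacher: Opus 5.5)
Your Step 1 analysis is the key observation the paper also uses. The genuine gap is in the \emph{algorithm} of Steps 2--3. It chooses ``a rightmost redex of maximal rank $r$'', so it has to compute the ranks of redex heads. You never say how; you only assert that each step is polynomial.

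A rank is the sort of an inferred type, and it is not a function of the term alone. For example, $(\lambda(y:x).\,y)\,z$ has rank $1$ in the context $x:\rk{0},\,z:x$ and rank $6$ in the context $x:\rk{5},\,z:x$. The input, however, is just a term that is $d$-bounded in \emph{some} context.

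Supplying the context does not rescue this. Computing sorts requires conversion checks on context types and on synthesized types, and these need not be $d$-bounded. Take $x : b\,\rk{0}\,\rk{3}\,\rk{5}$, where $b : \Pi(X:\rk{0}).\,X\to X\to X$ is a closed Church boolean. Deciding whether $x$ lives in $\rk{3}$ or in $\rk{5}$ then means evaluating $b$. For $b$ of unbounded rank this is not elementary, which is the phenomenon behind the \textsc{TOWER}-hardness result. So your procedure is not shown to be elementary, or even effective.

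The paper avoids this by making the algorithm rank-oblivious. Its greedy parallel reduction is an untyped bottom-up pass that contracts an application whenever the recursively reduced head is a $\lambda$. One pass therefore swallows whole $\lambda$-spines, together with redexes created in head position along the way, and the size grows at most exponentially.

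In the paper, ranks appear only in the correctness argument. Your Step 1 shows that every redex remaining after a pass arose by substituting a $\lambda$ for a variable in head position. Such a redex has rank strictly below that of the redex being contracted, so $d$ passes suffice. A final redex check then makes the function total.

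You can repair your proof the same way. Use the structural induction you invoke in Step 2 as the algorithm itself, contracting head spines of \emph{every} rank; contracting a lower-rank spine only creates lower-rank redexes. Then drop the rank-directed sequential strategy.

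Two smaller points:
\begin{itemize}
\item ``By Step 1, every redex created has rank $<r$'' is false for a single $\beta$-step, by your own second case. Step 2 must contract the chosen redex together with its whole spine.
\item Your $2^n$ bound belongs to the bottom-up strategy, not to the rightmost-innermost one. For the latter you only get squaring per contraction, which is still elementary for fixed $d$.
\end{itemize}
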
\markdownRendererInterblockSeparator
{}\begin{proof}Consider the following relation of “greedy\markdownRendererSoftLineBreak
{}parallel reduction”:\begin{mathpar}
  \inferrule{ }{x ⇒ x\\ s ⇒ s}\and
  \inferrule
    {T ⇒ T' \\ U ⇒ U'}
    {Π (x : T).U ⇒ Π (x : T').U'}\and
  \inferrule
    {T ⇒ T' \\ t ⇒ t'}
    {λ (x : T).t ⇒ λ (x : T').t'}\and
  \inferrule
    {t ⇒ (λ (x : T).t') \\ u ⇒ u'}
    {t\,u ⇒ t'[u'/x]}\and
  \inferrule
    {t ⇒ t' \\ t' ≠ λ … \\ u ⇒ u'}
    {t\,u ⇒ t'\,u'}
\end{mathpar}\markdownRendererInterblockSeparator
{}It differs from the usual parallel reduction since it\markdownRendererSoftLineBreak
{}greedily applies every “many-argument” redex\markdownRendererSoftLineBreak
{}available in a term.\markdownRendererSoftLineBreak
{}For example,\markdownRendererSoftLineBreak
{}\markdownRendererInlineMath{(λ (x : X).λ (y : Y).λ (z : Z).t)\,a\,b\,c} gets\markdownRendererSoftLineBreak
{}reduced to \markdownRendererInlineMath{t'[a/x][b/y][c/z]} in a single step.\markdownRendererSoftLineBreak
{}This relation has the following properties:\markdownRendererInterblockSeparator
{}\markdownRendererUlBegin
\markdownRendererUlItem \markdownRendererInlineMath{( ⇒ ) ⊆ ( →_β^∗ )}, so it is several usual\markdownRendererSoftLineBreak
{}reductions baked into one.\markdownRendererUlItemEnd 
\markdownRendererUlItem This relation is total and functional.\markdownRendererUlItemEnd 
\markdownRendererUlItem This relation can be realized with an elementary\markdownRendererSoftLineBreak
{}recursive algorithm \markdownRendererInlineMath{\mathcal{A}}:\markdownRendererSoftLineBreak
{}traverse the term bottom-up, immediately\markdownRendererSoftLineBreak
{}applying every \markdownRendererInlineMath{β}-redex which is formed when\markdownRendererSoftLineBreak
{}the result of a recursive run of the algorithm on\markdownRendererSoftLineBreak
{}the application head is a lambda.\markdownRendererUlItemEnd 
\markdownRendererUlItem If \markdownRendererInlineMath{t ⇒ t'}, then\markdownRendererSoftLineBreak
{}\markdownRendererInlineMath{|t'| ≤ (2 + ε )^{\mathcal{O}(|t|)}}.\markdownRendererUlItemEnd 
\markdownRendererUlItem If redex heads in \markdownRendererInlineMath{t} have ranks less than \markdownRendererEmphasis{or\markdownRendererSoftLineBreak
{}equal to} \markdownRendererInlineMath{d} and \markdownRendererInlineMath{t ⇒ t'}, redex heads in \markdownRendererInlineMath{t'}\markdownRendererSoftLineBreak
{}have ranks \markdownRendererEmphasis{less than} \markdownRendererInlineMath{d}: new redexes appear\markdownRendererSoftLineBreak
{}only after substitution of abstractions into\markdownRendererSoftLineBreak
{}application heads, and ranks of these\markdownRendererSoftLineBreak
{}abstractions are strictly less than the rank of\markdownRendererSoftLineBreak
{}redex heads in which these substitutions\markdownRendererSoftLineBreak
{}happened.\markdownRendererUlItemEnd 
\markdownRendererUlEnd \markdownRendererInterblockSeparator
{}Because of the last property, it is enough to run\markdownRendererSoftLineBreak
{}\markdownRendererInlineMath{\mathcal{A}} for \markdownRendererInlineMath{d} times in total to normalize\markdownRendererSoftLineBreak
{}a \markdownRendererInlineMath{d}-bounded term; since input size on \markdownRendererInlineMath{k}-th run\markdownRendererSoftLineBreak
{}is bounded by \markdownRendererInlineMath{k}-th tower of exponents,\markdownRendererSoftLineBreak
{}the whole algorithm is elementary recursive.\markdownRendererInterblockSeparator
{}To make sure that the corresponding function is\markdownRendererSoftLineBreak
{}total, output \markdownRendererInlineMath{0} if the result of an algorithm\markdownRendererSoftLineBreak
{}contains a \markdownRendererInlineMath{β}-redex.\end{proof}\markdownRendererInterblockSeparator
{}Similar result was already proven for STLC \markdownRendererCite{2}+{}{}{schwichtenberg1991upper}+{}{}{beckmann2001exact}\markdownRendererSoftLineBreak
{}since simple types can be classified by ranks as\markdownRendererSoftLineBreak
{}well.\markdownRendererSoftLineBreak
{}And since rank is easily computable for simple\markdownRendererSoftLineBreak
{}types, normalization of arbitrary STLC terms is in\markdownRendererSoftLineBreak
{}\markdownRendererInlineMath{\mathsf{TOWER}}.\markdownRendererSoftLineBreak
{}A little weaker result holds for our dependently\markdownRendererSoftLineBreak
{}typed PTS:\markdownRendererInterblockSeparator
{}\begin{theorem}\label{tower-convertibility}Convertibility of \markdownRendererStrongEmphasis{closed} well-typed \PTSrk{}\markdownRendererSoftLineBreak
{}terms is in \markdownRendererInlineMath{\mathsf{TOWER}}.\end{theorem}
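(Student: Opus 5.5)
To decide $t =_β u$ for closed well-typed terms $t, u$ (typed in the empty context, which is trivially well-formed), I would reduce to Theorem~\ref{bounded-sn}. By the characterization of $( =_β )$ and confluence, together with strong normalization (Theorem~\ref{normalization}), $t =_β u$ holds iff the (unique) normal forms of $t$ and $u$ coincide up to $α$-equivalence. So it suffices to compute normal forms of closed well-typed terms within time bounded by a tower of exponentials whose height is itself elementary in the input size.

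The key step is to compute, from the input, a bound $d$ such that $t$ is $d$-bounded, with $d$ bounded polynomially (or at least elementarily) in $|t|$. For every redex head $λ(x:T).v$ in $t$, its type is $Π(x:T).V$ for some $V$, and the rank of the head is the sort $\rk{r}$ with $Γ ⊢ Π(x:T).V : \rk{r}$, which is unique by uniqueness of typing and equality inversion. The plan is to run a type-inference pass that synthesizes, for every subterm, its type \emph{together with} the sort of that type. The difficulty is that inferred types may need conversion to be compared, and in the \textsc{App} case we must check that the head's type reduces to a $Π$. Here closedness helps. I would argue by induction on the structure of $t$ that the sort of each subterm's type can be computed without normalizing anything. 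Sorts are preserved under $=_β$ by type stability, and the sort of $U[t/x]$ equals that of $U$ by the substitution lemma. Consequently, the rank of a $Π$ is computed syntactically by the rule $(r+1) ∨ s$ from the ranks of its components, and a variable's rank is read off the context. This yields $d ≤ |t|$, or at most a bound linear in the nesting depth of $Π$'s in the annotations occurring in $t$.

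Given $d$, Theorem~\ref{bounded-sn} normalizes in at most $d$ rounds of greedy parallel reduction. Each round at most exponentiates the size, so the output has size at most $\exp^{d}(O(|t|))$, with the same time bound. Since $d$ is polynomial in $|t|$, the total is bounded by $\exp^{p(n)}(n)$, which is in $\mathsf{TOWER}$. Comparing the two normal forms syntactically is then linear in their size.

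The main obstacle is the rank-computation step. Determining the rank of redex heads requires knowing the sort of the head's type, and in a dependent system the type of a subterm can be an unevaluated application whose normal form is a $Π$ only after reduction. I would first try to prove a lemma that the sort of a type is invariant under reduction and substitution, so that sorts can be propagated through inference without normalizing the types themselves. Where a type is genuinely a neutral application, closedness guarantees its head is eventually a lambda or a sort. A bound on ranks of such type-level redexes should also follow from the sort annotations appearing syntactically in $t$, since every sort $\rk{r}$ mentioned bounds the relevant ranks. If this fails, the fallback is to use the typing derivation sizes, but only convertibility of \emph{already well-typed} terms is claimed, so no typechecking cost needs to be paid.
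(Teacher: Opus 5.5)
Your outer reduction matches the paper's: compute a bound $B$ on redex-head ranks, run the normalizer of Theorem~\ref{bounded-sn} for $(B+1)$-bounded terms on both sides, and compare normal forms; the tower accounting is also fine. The gap is the rank-bound step, which is the whole content of the paper's proof, and your main plan for it fails. The facts you cite (sorts are stable under $=_\beta$ and substitution) concern the sort of a \emph{type}. The rank of a binder domain $T$, however, is the sort of $T$ itself, i.e.\ the value \emph{as a sort} of the type of $T$. When $T$ is a variable or an application, that type is an arbitrary expression in $\rk{0}$ which only $\beta$-reduces to a sort. For example,
\[ \lambda(y : (\lambda(A:\rk{0}).A)\,\rk{5}).\,\lambda(w : y).\,(\lambda(z : y).z)\,w \]
is closed and well typed ($y : \rk{5}$ by \textsc{Conv}), and the redex head $\lambda(z:y).z$ has type $\Pi(z:y).y : \rk{6}$. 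The context entry of $y$ is not a sort, and its own sort is $\rk{0}$. So ``reading the rank off the context'' yields nothing unless you reduce that entry, and in general it can be an arbitrarily expensive sort-valued computation. This term contains no $\Pi$ at all, so a bound linear in the $\Pi$-nesting of annotations is also false. Appealing to what neutral heads ``eventually'' become is an appeal to reduction, which is exactly what you wanted to avoid.

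Your fallback points in the right direction but is wrong as stated: literals do not bound ranks. For instance, $(\lambda(f:\rk{0}\to\rk{0}).f)\,(\lambda(A:\rk{0}).A)$ has a redex head of rank $2$ while only $\rk{0}$ occurs. What literals bound is the \emph{value} of sort-valued expressions, and the missing fact explains why: $\beta$-reduction and substitution never create sort literals. Hence anything a subterm or a bound variable of $t$ can evaluate to as a sort is a literal $\rk{r}$ already occurring in $t$. The paper turns this into \texttt{estimateRank}, a single syntactic traversal computing, for each subterm, its value as a rank and its rank as a type, propagating $\max(r+1,s)$ through $\Pi$. Every variable is assigned the default value $\mathtt{maxRankLiteral}\,t$, and soundness follows from monotonicity of the computation in that default. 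This needs no normalization and gives a bound elementary in the input size. It is not necessarily polynomial if rank indices are written in binary, but that still suffices for $\mathsf{TOWER}$.
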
\markdownRendererInterblockSeparator
{}\begin{proof}By Theorem \markdownRendererLink{\markdownRendererHash{}bounded-sn}{\markdownRendererHash{}bounded-sn}{#bounded-sn}{}, it is enough to compute\markdownRendererSoftLineBreak
{}some upper bound \markdownRendererInlineMath{B} on the rank of types\markdownRendererSoftLineBreak
{}appearing in abstractions of a term,\markdownRendererSoftLineBreak
{}then run a normalizer for \markdownRendererInlineMath{(B + 1)}-bounded terms\markdownRendererSoftLineBreak
{}and compare the results; \markdownRendererInlineMath{B} has to be bound by an\markdownRendererSoftLineBreak
{}elementary recursive function on the size of\markdownRendererSoftLineBreak
{}input. We present such a function in\markdownRendererSoftLineBreak
{}Figure \markdownRendererLink{\markdownRendererHash{}rank-bound-algorithm}{\markdownRendererHash{}rank-bound-algorithm}{#rank-bound-algorithm}{}:\markdownRendererSoftLineBreak
{}\markdownRendererInlineMath{\mathtt{estimateRank}\;t} provides a bound on\markdownRendererSoftLineBreak
{}ranks of redex arguments in \markdownRendererInlineMath{t}.\markdownRendererInterblockSeparator
{}\begin{figure}[htb]\markdownRendererInputFencedCode{./_markdown_PTS-rk/896db171c5359b6c5edc4059aa93c517.verbatim}{agda}{agda}\caption{Computing a bound on ranks of redex arguments}\label{rank-bound-algorithm}\end{figure}\markdownRendererInterblockSeparator
{}The function \markdownRendererInlineMath{\mathtt{maxRankLiteral}\;t} computes\markdownRendererSoftLineBreak
{}maximal rank which is contained as a subterm in\markdownRendererSoftLineBreak
{}the term \markdownRendererInlineMath{t}.\markdownRendererSoftLineBreak
{}The workhorse of this algorithm,\markdownRendererSoftLineBreak
{}\markdownRendererInlineMath{\mathtt{traversal}\;Γ \;t\;d},\markdownRendererSoftLineBreak
{}accepts the list \markdownRendererInlineMath{Γ} of ranks in context of a term\markdownRendererSoftLineBreak
{}\markdownRendererInlineMath{t} and a “default rank” \markdownRendererInlineMath{d},\markdownRendererSoftLineBreak
{}and outputs the triple \markdownRendererInlineMath{(v, r, a)},\markdownRendererSoftLineBreak
{}where \markdownRendererInlineMath{v} is value of \markdownRendererInlineMath{t} as a rank (if supplied\markdownRendererSoftLineBreak
{}with enough arguments), \markdownRendererInlineMath{r} is its rank if \markdownRendererInlineMath{t} is\markdownRendererSoftLineBreak
{}a type (if supplied with enough arguments) and \markdownRendererInlineMath{a}\markdownRendererSoftLineBreak
{}is an upper bound on rank of types appearing in\markdownRendererSoftLineBreak
{}abstractions which we are trying to compute.\markdownRendererSoftLineBreak
{}The only suspicious case is\markdownRendererSoftLineBreak
{}\markdownRendererInlineMath{\mathtt{traversal}\,Γ \,x\,d} where we output \markdownRendererInlineMath{d}\markdownRendererSoftLineBreak
{}(the “default” rank) as a value of \markdownRendererInlineMath{x}:\markdownRendererSoftLineBreak
{}could we compute the value of \markdownRendererInlineMath{x} instead,\markdownRendererSoftLineBreak
{}if present?\markdownRendererSoftLineBreak
{}Regrettably, this would require normalization of a\markdownRendererSoftLineBreak
{}term corresponding to \markdownRendererInlineMath{x}, which is too long;\markdownRendererSoftLineBreak
{}instead we notice that \markdownRendererInlineMath{\rk{r}} can be reduced\markdownRendererSoftLineBreak
{}only from terms which already contain it as a\markdownRendererSoftLineBreak
{}subterm, so the value of \markdownRendererInlineMath{x} as a rank can only be\markdownRendererSoftLineBreak
{}\markdownRendererInlineMath{\rk{r}} which is already present somewhere in\markdownRendererSoftLineBreak
{}the original term.\markdownRendererSoftLineBreak
{}And our result is monotone with respect to the\markdownRendererSoftLineBreak
{}value we output here, so it is enough to provide\markdownRendererSoftLineBreak
{}some upper bound on it; thus it is enough to\markdownRendererSoftLineBreak
{}output a maximal rank which occurs in the original\markdownRendererSoftLineBreak
{}term as a subterm.\end{proof}\markdownRendererInterblockSeparator
{}Closedness is essential: without the context,\markdownRendererSoftLineBreak
{}we cannot determine the ranks of free type\markdownRendererSoftLineBreak
{}variables and hence cannot bound\markdownRendererSoftLineBreak
{}redex ranks.\markdownRendererInterblockSeparator
{}\begin{theorem}Convertibility of closed well-typed \PTSrk{} terms\markdownRendererSoftLineBreak
{}is \textsc{TOWER}-hard (hence\markdownRendererSoftLineBreak
{}\textsc{TOWER}-complete). \formalized{rocq-ai:NormalizationComplexity/STLCEmbedding.v\#L198}\markdownRendererSoftLineBreak
{}\markdownRendererNote{The badge certifies the annotation-preserving\markdownRendererSoftLineBreak
{}reduction from closed Church-style STLC,\markdownRendererSoftLineBreak
{}including typing, preservation and reflection of\markdownRendererSoftLineBreak
{}conversion, and the exact additive encoding-size\markdownRendererSoftLineBreak
{}bound.}\end{theorem}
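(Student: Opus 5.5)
The plan is to reduce from the known \textsf{TOWER}-hardness of $β$-convertibility for closed simply typed terms (Statman's theorem, in the sharpened form of Schwichtenberg and Beckmann). Since convertibility of closed STLC terms of a fixed base type is \textsf{TOWER}-hard, it suffices to give an elementary (indeed linear-size) translation $⟦-⟧$ from closed Church-style STLC terms into closed well-typed \PTSrk{} terms that preserves and reflects $=_β$. Membership in \textsf{TOWER} is Theorem~\ref{tower-convertibility}, so completeness follows.

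For the translation, I would abstract over a single base type. STLC has one base type $o$, so a closed STLC term $M$ becomes $λ(o : \rk{0}).\,⟦M⟧$, where $o$ is a variable of type $\rk{0}$. Simple types are translated homomorphically, with $⟦A → B⟧ = ⟦A⟧ → ⟦B⟧$. A direct induction shows that $o : \rk{0} ⊢ ⟦A⟧ : \rk{\mathrm{rank}(A)}$: the base case uses $o : \rk{0}$, and the arrow case uses the rule $(\rk{r}, \rk{s}, \rk{(r+1) ∨ s})$, which always exists because \PTSrk{} is total. Terms are translated verbatim, keeping annotations: $⟦λ(x:A).M⟧ = λ(x:⟦A⟧).⟦M⟧$. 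Typing is preserved by induction on the STLC derivation, using \textsc{Var}, \textsc{Abs} (whose premise $⟦A⟧ : \rk{r}$ we just established) and \textsc{App}; since no dependency arises, no \textsc{Conv} is needed. The outer abstraction over $o$ is typed because $\rk{0} : \rk{0}$. The context $o : \rk{0}$ is well-formed. The encoding size is additive: each type annotation grows by at most a constant, and one prefix is added.

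Preservation of conversion is immediate, because translation commutes with substitution ($⟦M[N/x]⟧ = ⟦M⟧[⟦N⟧/x]$), so every STLC $β$-step maps to a \PTSrk{} $β$-step. Reflection is the main obstacle. \PTSrk{} terms may reduce inside type annotations, and $=_β$ is defined on all raw terms, so we must make sure no spurious identifications occur. I would handle this through the characterization of $=_β$: if $⟦M⟧ =_β ⟦N⟧$, there is a common reduct $v$.

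The key observation is that the image of $⟦-⟧$ is closed under $→_β$. Annotations $⟦A⟧$ contain only $o$ and arrows, so they are normal, and any redex in $⟦M⟧$ is the image of a redex in $M$ whose contractum is again an image. Hence both reduction sequences stay in the image and pull back to STLC reductions $M →_β^∗ M'$ and $N →_β^∗ N'$ with $⟦M'⟧ = v = ⟦N'⟧$. Injectivity of $⟦-⟧$ (it is a syntactic homomorphism) then gives $M' = N'$, so $M =_β N$ in STLC. The same reasoning goes through under the outer $λ(o:\rk{0})$ prefix, since that prefix is never part of a redex.

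Finally, I would check that the source problem remains \textsf{TOWER}-hard with a single base type and closed terms, which is exactly Statman's setting. I would then conclude with the membership result already established.
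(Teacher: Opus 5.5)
Your proposal is correct and follows essentially the paper's route: an annotation-preserving embedding of closed Church-style STLC into \PTSrk{}, with base types abstracted as variables of sort $\rk{0}$ (your single $o$ plays the role of the paper's $T_1,\dots,T_n$), and your closure-under-reduction and injectivity argument makes explicit the reflection of conversion that the paper leaves implicit. One citation fix: the \textsf{TOWER}-hardness of $\beta$-convertibility you start from is the result the paper takes from Nguyen, going back to Statman, whereas Schwichtenberg and Beckmann supply upper bounds and reduction-length bounds rather than hardness of the decision problem.
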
\markdownRendererInterblockSeparator
{}\begin{proof}By reduction from convertibility of Curry-style\markdownRendererSoftLineBreak
{}STLC terms, since that problem is\markdownRendererSoftLineBreak
{}\textsc{TOWER}-hard \markdownRendererCite{1}+{}{}{nguyen2024simply}.\markdownRendererInterblockSeparator
{}Just in case, we start from Curry-style STLC\markdownRendererSoftLineBreak
{}terms.\markdownRendererSoftLineBreak
{}We then reduce this task to convertibility of\markdownRendererSoftLineBreak
{}\markdownRendererStrongEmphasis{closed} Curry-style STLC terms by adding\markdownRendererSoftLineBreak
{}\markdownRendererInlineMath{λ}-abstractions to close over free variables;\markdownRendererSoftLineBreak
{}finally, we infer types of variables in an STLC\markdownRendererSoftLineBreak
{}term by a linear unification algorithm referenced\markdownRendererSoftLineBreak
{}in \markdownRendererCite{1}+{}{}{nguyen2024simply} and settle on \markdownRendererStrongEmphasis{closed}\markdownRendererSoftLineBreak
{}well-typed \markdownRendererStrongEmphasis{Church-style} STLC terms.\markdownRendererSoftLineBreak
{}The following translation then yields \markdownRendererStrongEmphasis{open}\markdownRendererSoftLineBreak
{}well-typed \PTSrk{} terms:\begin{align*}
  ⦇ x ⦈ & = x \\
          ⦇ λ x^σ.t
          ⦈ & = λ (x : σ).
              ⦇ t ⦈ \\
          ⦇ t\,u ⦈ & = ⦇ t ⦈ \,
          ⦇ u ⦈
\end{align*}\markdownRendererInterblockSeparator
{}The problem is that types \markdownRendererInlineMath{σ} might contain type\markdownRendererSoftLineBreak
{}variables \markdownRendererInlineMath{T_1, …, T_n} which are not bound.\markdownRendererSoftLineBreak
{}However, it is easily fixable by introducing\markdownRendererSoftLineBreak
{}missing types via abstractions:\begin{mathpar}
  t ↝ λ (T_1 : \rk{0})…
     (T_n : \rk{0}).⦇ t ⦈
\end{mathpar}\markdownRendererParagraphSeparator
{}Assigning all type variables rank \markdownRendererInlineMath{\rk{0}}\markdownRendererSoftLineBreak
{}preserves well-typedness since values of such\markdownRendererSoftLineBreak
{}types were never treated as functions in the original\markdownRendererSoftLineBreak
{}term.\end{proof}\markdownRendererInterblockSeparator
{}\begin{corollary}Type checking for closed \PTSrk{} terms is\markdownRendererSoftLineBreak
{}\textsc{TOWER}-complete.\end{corollary}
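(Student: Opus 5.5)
The corollary has two halves, and I would prove them separately: membership in \textsf{TOWER} and \textsf{TOWER}-hardness.

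\emph{Membership.} I would decide $Γ ⊢ t : T$ for closed $t$ (so $Γ = ⋅$, trivially well-formed) with the usual syntax-directed algorithm. It infers a type by walking the term structurally and replaces \textsc{Conv} by convertibility checks at the application and annotation positions. This is justified by the inversion lemmas, uniqueness of typing and the characterization of $( =_β )$.

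The checks that arise are of two kinds. At an application $u\,v$ we compare the inferred domain of $u$ with the inferred type of $v$. At an abstraction or $Π$ we check that the annotation's inferred type reduces to some sort $\rk{r}$. Each check is a convertibility question between terms that are already known to be well-typed in a context built from subterms of $t$. Closing these terms over that context by $λ$-abstractions produces closed well-typed terms of size polynomial in $|t|$. Convertibility is preserved by this closing, since it is a congruence and injective on abstractions.

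Theorem~\ref{tower-convertibility} then decides each check within a \textsf{TOWER} bound. The number of checks is polynomial, and \textsf{TOWER} is closed under polynomially many calls with polynomial-size arguments. The final step is to check $T$ against the computed type and verify that $T$ is well-typed, which is again one convertibility call.

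\emph{Hardness.} I would reuse the reduction from the previous theorem. Given closed terms $a, b$ of the same type $A$, decide $a =_β b$ by type checking
\[ λ (P : A → \rk{0}).\,λ (p : P\,a).\,p \;:\; Π (P : A → \rk{0}).\,P\,a → P\,b. \]
This holds iff $P\,a =_β P\,b$, which by equality inversion and Church--Rosser holds iff $a =_β b$ (as $P$ is a variable). The step I expect to need care is well-typedness of the type itself: $A → \rk{0}$ must be typable. It is, since $\rk{0} : \rk{0}$ and $A$ has some rank, so the \textsc{Rule} gives a sort, and $P\,a : \rk{0}$. The translated term has size linear in $|a| + |b|$, so hardness transfers from the previous theorem.

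The main obstacle is the membership half. One has to make sure the type checker never forces normalization of open terms with unbounded context ranks, which is why the contexts are closed into $λ$-prefixes before Theorem~\ref{tower-convertibility} is invoked.
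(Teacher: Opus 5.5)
Your proposal follows the paper's proof. Hardness turns $t =_\beta t'$ into $P\,t =_\beta P\,t'$ for a predicate variable $P : T \to \rk{0}$; your $\lambda P\,p.\,p : \Pi P.\,P\,a \to P\,b$ is a type-checking variant of the paper's typability gadget $\lambda P\,x.\,(\lambda (y : P\,t').\,y)\,x$. Membership uses syntax-directed type synthesis whose conversion checks are discharged by Theorem~\ref{tower-convertibility}, and your closing of each check under a $\lambda$-prefix usefully supplies the closedness that theorem requires, which the paper leaves implicit. One correction: the compared types need not be polynomial in $|t|$, because the application rule substitutes arguments into codomains and these substitutions compound under nested binders, so sizes can double per nesting level. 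The paper's $O(2^{\mathrm{poly}(|t|)})$ bound is the right one, and it still keeps every call within \textsf{TOWER}.
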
\markdownRendererInterblockSeparator
{}\begin{proof}The following term demonstrates that\markdownRendererSoftLineBreak
{}convertibility of closed well-typed terms\markdownRendererSoftLineBreak
{}\markdownRendererInlineMath{t, t' : T} reduces to type checking:\begin{mathpar}
  λ (P : T → \rk{0})(x : P\,t).
  (λ (y : P\,t').y)\,x
\end{mathpar}\markdownRendererInterblockSeparator
{}This term is well-typed iff \markdownRendererInlineMath{P\,t =_β P\,t'} which\markdownRendererSoftLineBreak
{}holds iff \markdownRendererInlineMath{t =_β t'}.\markdownRendererInterblockSeparator
{}Therefore, we only need to prove that type checking\markdownRendererSoftLineBreak
{}for \PTSrk{} is in \textsc{TOWER}.\markdownRendererSoftLineBreak
{}Consider the following syntax-directed type\markdownRendererSoftLineBreak
{}synthesis rules:\begin{mathpar}
  \inferrule[Ax']{ }
  {Γ ⊢ \rk{r} ⇒ \rk{0}}
  \and
  \inferrule[Rule']
    {Γ ⊢ T ⇒ \rk{r} \\ Γ, x : T ⊢ U ⇒ \rk{s}}
    {Γ ⊢ Π (x : T).\,U ⇒ \rk{\max(r + 1, s)}}
  \\
  \inferrule[Var']{ }
  {Γ ⊢ x ⇒ T}x : T ∈ Γ
  \and
  \inferrule[Abs']
    {Γ ⊢ T ⇒ \rk{r} \\ Γ, x : T ⊢ u ⇒ U}
    {Γ ⊢ λ (x : T).u ⇒ Π (x : T).U}
  \and
  \inferrule[App']
    {Γ ⊢ u ⇒ Π (x : T).U \\ Γ ⊢ t ⇒ T' \\ T =_β T'}
    {Γ ⊢ u\,t ⇒ U[t/x]}
\end{mathpar}\markdownRendererInterblockSeparator
{}This relation is functional (in the sense that if\markdownRendererSoftLineBreak
{}\markdownRendererInlineMath{Γ ⊢ t : T} and \markdownRendererInlineMath{Γ ⊢ t : T'},\markdownRendererSoftLineBreak
{}then \markdownRendererInlineMath{T = T'} as words), and the corresponding\markdownRendererSoftLineBreak
{}partial function is realized by an algorithm in\markdownRendererSoftLineBreak
{}\textsc{TOWER}: this algorithm recurses on the\markdownRendererSoftLineBreak
{}term \markdownRendererInlineMath{t}, matches on the inferred types as needed\markdownRendererSoftLineBreak
{}and collects equalities on types it has to check.\markdownRendererSoftLineBreak
{}Then, it can naïvely check all equalities in bulk:\markdownRendererSoftLineBreak
{}there are at most \markdownRendererInlineMath{\mathcal{O}(|t|)} such\markdownRendererSoftLineBreak
{}equalities on terms of size\markdownRendererSoftLineBreak
{}\markdownRendererInlineMath{\mathcal{O}(2^{\mathrm{poly}(|t|)})},\markdownRendererSoftLineBreak
{}so this runs in \textsc{TOWER} by Theorem \markdownRendererLink{\markdownRendererHash{}tower-convertibility}{\markdownRendererHash{}tower-convertibility}{#tower-convertibility}{}.\markdownRendererSoftLineBreak
{}To finish the proof, it is enough to notice that\begin{mathpar}
  Γ ⊢ t : T \quad ⟺ \quad
  ∃ T', r.\;(Γ ⊢ T : r) ∧ (T =_β T') ∧ (Γ ⊢ t ⇒ T').
\end{mathpar}\markdownRendererInterblockSeparator
{}So the main algorithm infers the type of \markdownRendererInlineMath{T},\markdownRendererSoftLineBreak
{}checks that it is \markdownRendererInlineMath{\rk{r}} for some \markdownRendererInlineMath{r},\markdownRendererSoftLineBreak
{}infers the type of \markdownRendererInlineMath{t} and compares it with \markdownRendererInlineMath{T}.\end{proof}\markdownRendererInterblockSeparator
{}
\markdownRendererSectionEnd \markdownRendererSectionBegin
\markdownRendererHeaderAttributeContextBegin
\markdownRendererAttributeIdentifier{computational}\markdownRendererHeadingOne{\PTSrk{} as a computational model}\markdownRendererHeaderAttributeContextEnd{}\markdownRendererInterblockSeparator
{}In this section we prove that functions\markdownRendererSoftLineBreak
{}representable in \PTSrk{} are,\markdownRendererSoftLineBreak
{}under minor assumptions, precisely elementary\markdownRendererSoftLineBreak
{}recursive functions, and provide a bunch of other,\markdownRendererSoftLineBreak
{}more specialized, results.\markdownRendererParagraphSeparator
{}Another consequence of Theorem \markdownRendererLink{\markdownRendererHash{}bounded-sn}{\markdownRendererHash{}bounded-sn}{#bounded-sn}{} is a\markdownRendererSoftLineBreak
{}natural upper bound for functions \markdownRendererEmphasis{uniformly}\markdownRendererSoftLineBreak
{}representable in \PTSrk{} (this result is also\markdownRendererSoftLineBreak
{}similar to the one for STLC \markdownRendererCite{3}+{}{}{schwichtenberg1976definierbare}+{}{}{statman1979typed}+{}{}{fortune1983expressiveness}).\markdownRendererInterblockSeparator
{}\begin{definition}A family of functions on natural numbers\markdownRendererSoftLineBreak
{}\markdownRendererInlineMath{\mathcal{F}} is \markdownRendererEmphasis{uniformly} representable in\markdownRendererSoftLineBreak
{}\PTSrk{} if there is an \markdownRendererEmphasis{input natural number\markdownRendererSoftLineBreak
{}type} \markdownRendererInlineMath{N_I : \rk{r}} and an \markdownRendererEmphasis{output natural number\markdownRendererSoftLineBreak
{}type} \markdownRendererInlineMath{N_O : \rk{s}} together with \markdownRendererEmphasis{encodings}\markdownRendererSoftLineBreak
{}\markdownRendererInlineMath{∀ n ∈ ℕ.[n]^I : N_I} and \markdownRendererInlineMath{∀ n.[n]^O : N_O} such\markdownRendererSoftLineBreak
{}that all of them are in normal form and for each\markdownRendererSoftLineBreak
{}\markdownRendererInlineMath{(f : ℕ^k → ℕ) ∈ \mathcal{F}} there is a\markdownRendererSoftLineBreak
{}\markdownRendererEmphasis{representation} \markdownRendererInlineMath{F : N_I → … → N_I → N_O} which\markdownRendererSoftLineBreak
{}is also in normal form and such that\begin{mathpar}
  ∀ (n_1, …, n_k) ∈ ℕ^k : \quad F\,[n_1]^I\,…\,[n_k]^I =_β [f(n_1, \,…, \,n_k)]^O
\end{mathpar}\end{definition}\markdownRendererInterblockSeparator
{}\begin{remark}Note that the requirement for terms to be in normal\markdownRendererSoftLineBreak
{}form is not restrictive: we can always normalize a\markdownRendererSoftLineBreak
{}non-normal term.\end{remark}\markdownRendererInterblockSeparator
{}\begin{theorem}\label{uniform-elementary}A family of elementary recursive functions is\markdownRendererSoftLineBreak
{}\markdownRendererStrongEmphasis{not} uniformly representable in \PTSrk{}.\end{theorem}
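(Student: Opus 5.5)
The claim to establish is: the family of \emph{all} elementary recursive functions is not uniformly representable. The natural route is a diagonal argument built on Theorem~\ref{bounded-sn}. Fix the data $N_I : \rk{r}$, $N_O : \rk{s}$ and the encodings, and suppose every elementary $f$ has a normal-form representation $F_f : N_I \to \dots \to N_I \to N_O$.

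The key observation is that the application $F\,[n_1]^I \dots [n_k]^I$ is $d$-bounded for a $d$ depending only on $N_I$, $N_O$, and not on $f$ or the $n_i$. Since $F$ and the $[n_i]^I$ are normal, the only redexes in the application are those whose head is $F$ (or a partial application of it). Such a head has type $N_I \to \dots \to N_O$, whose rank is bounded by $\max(r+1, s)$. This uses inversion of typing for $\Pi$, type stability to transfer the rank to the lambda, and the fact that the rank of a type is unique by uniqueness of typing. So every such term is $d$-bounded with $d = \max(r+1,s)$, uniformly in $f$.

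By Theorem~\ref{bounded-sn}, the map from a $d$-bounded term to its normal form is then computed by one fixed elementary algorithm $\mathcal{N}_d$. The proof ran $\mathcal{A}$ $d$ times, so the output size is bounded by a tower of height $d$ in the input size.

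Next, build a universal evaluator. Given a (code of a) normal term $F$ of the right type and unary inputs $n$, form $F\,[n]^I$, apply $\mathcal{N}_d$, and decode the normal form against the list of encodings $[m]^O$. Because $=_\beta$ is characterized by common reducts and everything is strongly normalizing, convertible terms have the same normal form, so the result equals $[f(n)]^O$ syntactically. The main obstacle is decoding: the map $n \mapsto [n]^O$ is arbitrary. I would handle it with size bounds. Define $E(F, n) = \mu m \le \mathrm{tower}_d(|F| + |[n]^I|)$ such that $[m]^O$ equals the computed normal form. Since the encodings are pairwise distinct normal forms, at most finitely many $m$ have encodings of a given size, so $f(n)$ is found within this bound. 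To keep everything elementary, I would also need $n \mapsto [n]^I$ and $m \mapsto [m]^O$ to be elementary. If the definition does not grant this, I would instead run the diagonalization against $f$'s whose \emph{growth} already exceeds the tower-height-$d$ bound. Distinct $m$ give distinct normal forms, so $|[m]^O| \to \infty$, and $f(n)$ is determined by a normal form of size at most $\mathrm{tower}_d(|F| + |[n]^I|)$.

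The diagonalization then uses growth, which avoids computability of the encodings entirely. Let $g(m) = \max\{ |[j]^O| : j \le m\}$ and $h(n) = |[n]^I|$. From the bound, $|[f(n)]^O| \le \mathrm{tower}_d(c_F + h(n))$. Now take the elementary function $f(n) = \mathrm{tower}_{d'}(n)$ for a $d'$ much larger than $d$. The difficulty is that the encodings could themselves grow slowly (for example logarithmically). Since there are only exponentially many terms of a given size, $|[m]^O| \ge c \log m$ for infinitely many $m$. Comparing the two sides, $\log \mathrm{tower}_{d'}(n) \lesssim \mathrm{tower}_d(c_F + h(n))$ must hold along a subsequence. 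So I would choose $d'$ relative to $d$ and to the growth of $h$, itself constrained by counting ($h(n) \ge c\log n$ infinitely often, and $h$ can be taken as the given encoding's size). I expect this growth bookkeeping, in particular handling arbitrary, possibly wildly growing input encodings, to be the hardest step. The plan is to exploit that the representations $F_f$ must exist for \emph{all} elementary $f$ with one fixed $h$: pick $f(n) = \mathrm{tower}_{d+2}(n)$ composed with the fixed $h$-profile, using the counting argument to force a contradiction.
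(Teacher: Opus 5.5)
You have the right strategy, a diagonalization resting on Theorem~\ref{bounded-sn}. You also have the right key observation: $F\,[n_1]^I\cdots[n_k]^I$ is $d$-bounded for a $d$ depending only on $N_I,N_O$, and your $d=\max(r+1,s)$ is, if anything, more careful than the paper's $r+1$. But you never close the diagonal, and the step meant to close it fails. The evaluator $E(F,n)=\mu m\le\mathrm{tower}_d(|F|+|[n]^I|)\,(\dots)$ relies on the claim that distinct encodings force $f(n)$ under that bound. They do not. The normal-form size bound controls $|[f(n)]^O|$, not the number $f(n)$. Distinctness only says that finitely many $m$ have short codes, not that those $m$ are numerically small. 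So $E(\mathtt G,\mathtt G)$ can differ from $g(\mathtt G)$, and a diagonal such as $g(n)=E(n,n)+1$ yields no contradiction.

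The missing idea is that no decoding is needed at all. Diagonalize with a two-valued function: $h(\mathtt t)=1$ if $\mathcal N_d(t\,[\mathtt t]^I)$ is syntactically the one fixed term $[0]^O$, and $h(\mathtt t)=0$ otherwise. This is elementary as soon as $\mathtt t\mapsto[\mathtt t]^I$ is. For a representative $H$ of $h$, uniqueness of normal forms together with $[0]^O\neq[1]^O$ gives $h(\mathtt H)=1\iff H\,[\mathtt H]^I=_\beta[0]^O\iff h(\mathtt H)=0$. That is exactly the paper's proof.

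Your growth-based fallback does not work, and cannot be made to work.
\begin{itemize}
\item Composing $\mathrm{tower}_{d+2}$ with the profile $n\mapsto|[n]^I|$ is not elementary when that profile is not, so it is not an admissible test function.
\item The inference that $\log\mathrm{tower}_{d'}(n)$ must stay below $\mathrm{tower}_d(c_F+|[n]^I|)$ along a subsequence is a non sequitur. The $m$ with $|[m]^O|\ge c\log m$ supplied by counting need not be values of your $f$.
\item More fundamentally, the size bound and the counting bound are jointly satisfiable. Enumerate the elementary functions as $(f_i)$. An injective output encoding may give codes of length $O(\log n)$ to the sparse sets $\{f_i(n): i\le n\}$. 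Then every $f_i$ eventually meets the size bound, for instance whenever $|[n]^I|$ grows at least logarithmically.
\end{itemize}
So sizes and counting alone can never produce a contradiction. The diagonal argument needs an effectivity hypothesis on $n\mapsto[n]^I$, together with $[0]^O\neq[1]^O$. The paper uses this tacitly when it asserts that $h$ is elementary; you should state it explicitly as a hypothesis rather than try to route around it.
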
\markdownRendererInterblockSeparator
{}\begin{proof}By a direct diagonalization argument.\markdownRendererSoftLineBreak
{}Suppose we could encode all elementary recursive\markdownRendererSoftLineBreak
{}functions with \markdownRendererInlineMath{N_I : \rk{r}} and \markdownRendererInlineMath{N_O : \rk{s}}.\markdownRendererInterblockSeparator
{}By Theorem \markdownRendererLink{\markdownRendererHash{}bounded-sn}{\markdownRendererHash{}bounded-sn}{#bounded-sn}{}, the normalization of\markdownRendererSoftLineBreak
{}\markdownRendererInlineMath{(r + 1)}-bounded terms \markdownRendererInlineMath{\mathcal{SN}_{r + 1}} is\markdownRendererSoftLineBreak
{}elementary recursive.\markdownRendererSoftLineBreak
{}Therefore, the following function is also\markdownRendererSoftLineBreak
{}elementary recursive:\begin{mathpar}
  h(t) =
  \begin{cases}
    1, & \mathcal{SN}_{r + 1}(t\,[\mathtt{t}]^I) = [0]^O, \\
    0 & \text{otherwise}.
  \end{cases}
\end{mathpar}\markdownRendererParagraphSeparator
{}(Where \markdownRendererInlineMath{\mathtt{t} ∈ ℕ} is code of a term \markdownRendererInlineMath{t} used\markdownRendererSoftLineBreak
{}in the definition of \markdownRendererInlineMath{h}.)\markdownRendererInterblockSeparator
{}By assumption, it is encodable by \markdownRendererInlineMath{H : N_I → N_O}.\markdownRendererSoftLineBreak
{}Observe the value of \markdownRendererInlineMath{h(H)}:\begin{mathpar}
  h(H) = 1 ⇔ \mathcal{SN}_{r + 1}(H\,[\mathtt{H}]^I) = [0]^O ⇔ H\,[\mathtt{H}]^I =_β [0]^O ⇔ h(H) = 0
\end{mathpar}\markdownRendererParagraphSeparator
{}The first equivalence is by definition of \markdownRendererInlineMath{h};\markdownRendererSoftLineBreak
{}the second is by virtue of expression under\markdownRendererSoftLineBreak
{}\markdownRendererInlineMath{\mathcal{SN}} being \markdownRendererInlineMath{(r + 1)}-bounded;\markdownRendererSoftLineBreak
{}the last is by definition of \markdownRendererInlineMath{h} being encoded by\markdownRendererSoftLineBreak
{}\markdownRendererInlineMath{H}.\end{proof}\markdownRendererInterblockSeparator
{}The next logical question is:\markdownRendererSoftLineBreak
{}which functions are computable?\markdownRendererSoftLineBreak
{}Elementary recursive!\markdownRendererSoftLineBreak
{}Though, of course, not in the same sense as\markdownRendererSoftLineBreak
{}before.\markdownRendererInterblockSeparator
{}\begin{definition}[Polymorphic numeral family]For each fixed \markdownRendererInlineMath{j ≥ 0}, there is a type of\markdownRendererSoftLineBreak
{}\markdownRendererEmphasis{polymorphic Church numerals} of rank \markdownRendererInlineMath{j + 2}:\begin{mathpar}
  ℕ_j ≜ Π (X : \rk{j}).
         (X → X) → X → X;
\end{mathpar}\markdownRendererInterblockSeparator
{}and for each fixed \markdownRendererInlineMath{n}, there is a corresponding\markdownRendererSoftLineBreak
{}\markdownRendererEmphasis{Church literal} of type \markdownRendererInlineMath{ℕ_j}:\begin{mathpar}
  \overline{n} ≜ λ \,X\,f\,x.f^n\,x.
\end{mathpar}\markdownRendererInterblockSeparator
{}(\markdownRendererInlineMath{f^n\,x} means repeated application of \markdownRendererInlineMath{f} to\markdownRendererSoftLineBreak
{}\markdownRendererInlineMath{x}: \markdownRendererInlineMath{f(…(f\,x)…)}.)\end{definition}\markdownRendererInterblockSeparator
{}\begin{theorem}[Non-uniform elementary representability]\label{non-uniform-rep}For each elementary recursive function\markdownRendererSoftLineBreak
{}\markdownRendererInlineMath{f : ℕ^n → ℕ}, there is a \markdownRendererEmphasis{cost} \markdownRendererInlineMath{c_f} such that\markdownRendererSoftLineBreak
{}for each \markdownRendererEmphasis{base rank} \markdownRendererInlineMath{j}, there is a\markdownRendererSoftLineBreak
{}\markdownRendererEmphasis{representation}\markdownRendererSoftLineBreak
{}\markdownRendererInlineMath{F_j : ℕ_{j + c_f} → … → ℕ_{j + c_f} → ℕ_j}\markdownRendererSoftLineBreak
{}such that\begin{mathpar}
  ∀ (x_1, …, x_n) ∈ ℕ^n : \quad
  F_j\,\overline{x_1}\,…\,\overline{x_n} =_β \overline{f(x_1, …, x_n)}.
\end{mathpar}\end{theorem}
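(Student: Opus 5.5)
I will prove the theorem by structural induction on a definition of the elementary recursive functions, and build each representation $F_j$ uniformly in $j$. The basis I choose is Kalmár's: projections, constants, successor, addition, multiplication and truncated subtraction $\dot-$, closed under composition, bounded sums $\sum_{i<y}$ and bounded products $\prod_{i<y}$. The invariant is stronger than the statement: for every $f$ there is a cost $c_f$ and, for every $j$, a closed term $F_j : ℕ_{j+c_f} → … → ℕ_{j+c_f} → ℕ_j$ representing $f$. Since the term depends on $j$, representability is non-uniform, exactly as Theorem \ref{uniform-elementary} forces.

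The first ingredient is the \emph{lowering} operation, which is what makes the ranks work. A numeral $m : ℕ_{k+1}$ can be instantiated at $X := ℕ_k$, because $ℕ_k : \rk{k+2}$ and so $ℕ_k$ is a legitimate argument only when the bound sort is large enough. Iterating the successor $\mathsf{succ}_k : ℕ_k → ℕ_k$ from $\overline{0}$ then gives
\[
\downarrow_k \;≜\; λ(m : ℕ_{k+2}).\, m\,ℕ_k\,\mathsf{succ}_k\,\overline{0} \;:\; ℕ_{k+2} → ℕ_k,
\]
with $\downarrow_k\overline{n} =_β \overline{n}$. So lowering costs $2$ ranks. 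I will check the typing of the instantiation carefully against the rules $(\rk{r},\rk{s},\rk{(r+1)∨s})$.

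With lowering available, the base cases go as follows.
\begin{itemize}
\item Successor, addition and multiplication are the usual Church terms at a single rank $j$, with cost $0$.
\item Truncated subtraction $x \dot- y$ is $y$ applied to a predecessor on $ℕ_j$. Kleene's pairing trick needs $X := ℕ_j × ℕ_j$, which I encode as $Π(Z:\rk{j}).(ℕ_j→ℕ_j→Z)→Z$ and which sits a couple of ranks up. Hence the argument $y$ must live at $ℕ_{j+c}$ for a small constant $c$.
\item Composition $f(g_1,…,g_m)$ is handled by choosing costs additively: $c = c_f + \max_i c_{g_i}$. Each $g_i$ is then used at base rank $j+c_f$, and each input at the lower rank is obtained from the single input type $ℕ_{j+c}$ by lowering.
\end{itemize}
Because a variable may be used several times at different ranks, every input type must be uniform. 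This is why the statement fixes all inputs at $ℕ_{j+c_f}$, and I will pad every input with lowerings to meet that form.

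The bounded sum and product are the main obstacle. For $\sum_{i<y} g(\vec x,i)$ I iterate $y$, instantiated at $X := ℕ_{j'} × ℕ_j$ (Church-encoded), a step that maps $(i, s)$ to $(i+1,\, s + G(\vec x, i))$. Here $G$ is the representation of $g$ at base rank $j$, and its argument $i$ must lie in $ℕ_{j+c_g}$, so I take $j' = j + c_g$. The pair type then has rank about $j+c_g+3$. Consequently $y$ must be available at $ℕ_{j+c_g+5}$ or so, and the $\vec x$ at $ℕ_{j+c_g}$ via lowering. The obstacle is keeping these rank computations consistent. Concretely, I must verify that every instantiation $m\,X$ is well-typed, i.e.\ that $X$ lies in the sort that $ℕ_k$ quantifies over, which is exactly $\rk{k}$. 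Ranks only increase, never by an amount depending on inputs, so $c_f$ stays a constant determined by the derivation of $f$.

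Finally, correctness is the equation $F_j\,\overline{x_1}…\overline{x_n} =_β \overline{f(\vec x)}$. I will prove it by the same induction, using only β-computation on closed numerals and confluence (the characterization of $=_β$).
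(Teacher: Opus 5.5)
\textbf{The gap: truncated subtraction.} The base case $x \mathbin{\dot{-}} y$ does not go through as proposed.

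\emph{Why iterating a predecessor fails.} An iteration $y\,X\,g\,a$ with $y : \mathbb{N}_k$ typechecks only if $X : \rk{k}$ exactly (there is no cumulativity) and $g : X \to X$ is an endomorphism. None of the predecessors available to you is one.
\begin{itemize}
\item To project an $\mathbb{N}_j$-component out of a Church pair, the bound sort must be $Z : \rk{j+2}$. With your $Z : \rk{j}$ the pair cannot be eliminated into $\mathbb{N}_j : \rk{j+2}$ at all.
\item With $Z : \rk{j+2}$ the pair type lies in $\rk{j+4}$, so Kleene's trick yields $\mathsf{pred} : \mathbb{N}_{j+4} \to \mathbb{N}_j$.
\item The variant instantiating at $(X \to X) \to X$ yields $\mathsf{pred} : \mathbb{N}_{j+2} \to \mathbb{N}_j$.
\end{itemize}
Since input and output ranks differ, ``$y$ applied to a predecessor'' is ill-typed. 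Chaining $y$ predecessors instead would consume a number of ranks depending on $y$, so no constant cost for $\dot{-}$ comes out. Because $\dot{-}$ is a primitive of your basis, the induction breaks here.

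\emph{The missing idea} is the one the paper spends on $\mathrm{mod}$. You iterate over a Church-encoded record quantifying over a carrier $S : \rk{j+1}$ that grows as $S^{+} = X \to S$ without leaving $\rk{j+1}$. This makes selectors on an input-dependent number of entries available at a fixed rank. With it you can get $x \mathbin{\dot{-}} y$, for instance as follows:
\begin{itemize}
\item build selectors on $x+1$ entries by iteration on $x$, carrying a top selector, a saturating backward shift, and an auxiliary field for rebuilding that shift (much as the paper rebuilds $c$ from $L$);
\item apply the shift $y$ times to the top selector;
\item read the result back.
\end{itemize}
As with $\mathsf{Mod}_j$, this needs $x : \mathbb{N}_{j+4}$ and $y : \mathbb{N}_{j+1}$.

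\textbf{Comparison with the paper, and two smaller corrections.} Your route genuinely differs from the paper's. The paper uses the substitution basis $\{+,\cdot,\mathrm{mod},2^x\}$, so only composition has to be handled and no bounded sums or products appear. Your Kalm\'ar-style induction avoids that basis theorem, at the price of implementing bounded sums and products. Your iteration on pairs for these can be made to work, with two corrections.
\begin{itemize}
\item \emph{Pair elimination.} A Church pair $\Pi(Z : \rk{k}).\,(A \to B \to Z) \to Z$ can only be eliminated into types of $\rk{k}$. So the step must thread the continuation instead of projecting: $\lambda p\,Z\,\kappa.\; p\,Z\,(\lambda i\,s.\;\kappa\,(\mathsf{succ}\,i)\,(\mathsf{Add}_j\,s\,(G\,\vec{x}\,i)))$, with $Z : \rk{j+2}$ so that the sum can be extracted at the end. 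The pair type is then exactly in $\rk{j+c_g+4}$. Consequently $y$ must have exactly that rank, not ``$j+c_g+5$ or so''; higher input ranks are reached by lowering.
\item \emph{Parity of lowering.} Your $\downarrow_k$ lowers by $2$ only, whereas composition must lower by $\max_i c_{g_i} - c_{g_i}$, which may be odd. Use the paper's $\delta_j$ instead: it instantiates at $X \to X : \rk{j+1}$ and lowers by one.
\end{itemize}
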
\markdownRendererInterblockSeparator
{}\begin{proof}As is well-known, simply typed Church numerals\markdownRendererSoftLineBreak
{}admit addition and multiplication \markdownRendererCite{2}+{}{}{church1941calculi}+{}{}{barendregt1984lambda}.\markdownRendererSoftLineBreak
{}It is still true for our rank-refined version:\begin{mathpar}
  \begin{array}{rcll}
    \mathsf{Add}_j & ≜ &
    λ \,m\,n\,X\,f\,x.
    m\,X\,f\,(n\,X\,f\,x) & : ℕ_j → ℕ_j → ℕ_j \\
    \mathsf{Mul}_j & ≜ &
    λ \,m\,n\,X\,f\,x.
    m\,X\,(n\,X\,f)\,x & : ℕ_j → ℕ_j → ℕ_j
  \end{array}
\end{mathpar}\markdownRendererInterblockSeparator
{}By substitution-basis theorem \markdownRendererCite{1}+{}{}{prunescu2026minimal},\markdownRendererSoftLineBreak
{}it is enough to additionally provide modulus\markdownRendererSoftLineBreak
{}\markdownRendererInlineMath{x\,\mathrm{mod}\,y} (where\markdownRendererSoftLineBreak
{}\markdownRendererInlineMath{x\,\mathrm{mod}\,0 = x}) and binary\markdownRendererSoftLineBreak
{}exponentiation \markdownRendererInlineMath{2^x} since substitution,\markdownRendererSoftLineBreak
{}constants and projections are “free” in our\markdownRendererSoftLineBreak
{}calculus.\markdownRendererInterblockSeparator
{}By Theorem \markdownRendererLink{\markdownRendererHash{}uniform-elementary}{\markdownRendererHash{}uniform-elementary}{#uniform-elementary}{},\markdownRendererSoftLineBreak
{}we cannot hope to have both remaining operations\markdownRendererSoftLineBreak
{}represented uniformly.\markdownRendererSoftLineBreak
{}Indeed, naïve import of traditional exponentiation\markdownRendererSoftLineBreak
{}term for untyped Church numerals \markdownRendererCite{2}+{}{}{church1941calculi}+{}{}{rosser1984highlights}\markdownRendererSoftLineBreak
{}raises the rank of an exponent:\begin{multline*}
  \mathsf{Exp}_j ≜ λ \,m\,n\,(X : \rk{j})\,f\,x.\,
          n\,(X → X)\,(λ (h : X → X)(y : X).
              m\,X\,h\,y
          )\,f\,x
          \\
          : ℕ_j → ℕ_{j + 1} → ℕ_j
\end{multline*}\markdownRendererInterblockSeparator
{}And that is exactly the reason we allowed input\markdownRendererSoftLineBreak
{}and output ranks of Church numerals to differ.\markdownRendererInterblockSeparator
{}There still remains a problem, however:\markdownRendererSoftLineBreak
{}by allowing ranks to differ, we lost uniformity of\markdownRendererSoftLineBreak
{}substitution.\markdownRendererSoftLineBreak
{}In particular, consider an elementary function\markdownRendererSoftLineBreak
{}\markdownRendererInlineMath{x ↦ x + 2^x}.\markdownRendererSoftLineBreak
{}To represent such computation using combinators we\markdownRendererSoftLineBreak
{}defined, \markdownRendererInlineMath{x} has to have both type \markdownRendererInlineMath{ℕ_j} (as\markdownRendererSoftLineBreak
{}left addend) and \markdownRendererInlineMath{ℕ_{j + 1}} (as\markdownRendererSoftLineBreak
{}exponentiation power).\markdownRendererSoftLineBreak
{}Instead of introducing subtyping in the system,\markdownRendererSoftLineBreak
{}we make use of the following family of\markdownRendererSoftLineBreak
{}\markdownRendererEmphasis{numeral-preserving lowerings}:\begin{mathpar}
  δ_j ≜ λ \,n\,X\,f\,x.\,
          n\,(X → X)\,(λ \,h\,y.f\,(h\,y))\,
          (λ \,y.y)\,x : ℕ_{j + 1} → ℕ_j
\end{mathpar}\markdownRendererInterblockSeparator
{}As the name implies, the goal of such combinators\markdownRendererSoftLineBreak
{}is to lower the rank of a numeral,\markdownRendererSoftLineBreak
{}but preserve its “value”.\markdownRendererSoftLineBreak
{}It is achieved in the following way:\markdownRendererSoftLineBreak
{}given an \markdownRendererInlineMath{f} and \markdownRendererInlineMath{x} for lower-ranked type,\markdownRendererSoftLineBreak
{}an original combinator for \markdownRendererInlineMath{n} is used to build an\markdownRendererSoftLineBreak
{}\markdownRendererInlineMath{n}-fold composition of \markdownRendererInlineMath{f} with itself, which is\markdownRendererSoftLineBreak
{}then applied to \markdownRendererInlineMath{x}.\markdownRendererInterblockSeparator
{}Thus, we can lower ranks arbitrarily:\markdownRendererSoftLineBreak
{}\markdownRendererInlineMath{δ_{b, a} ≜ δ_a ∘ … ∘ δ_{b - 1} : ℕ_b → ℕ_a}.\markdownRendererSoftLineBreak
{}This restores substitution and allows gluing\markdownRendererSoftLineBreak
{}representation of elementary recursive functions\markdownRendererSoftLineBreak
{}together; cost \markdownRendererInlineMath{c_f} of a function can be computed\markdownRendererSoftLineBreak
{}from any expression that represents it — addition\markdownRendererSoftLineBreak
{}and multiplication cost \markdownRendererInlineMath{0}, exponentiation costs\markdownRendererSoftLineBreak
{}\markdownRendererInlineMath{1}, operands are merged by taking the maximum of\markdownRendererSoftLineBreak
{}their costs.\markdownRendererSoftLineBreak
{}(Of course, cost is not unique and depends on the\markdownRendererSoftLineBreak
{}way the function is represented with an expression.)\markdownRendererInterblockSeparator
{}All that is left, then, is to represent modulus.\markdownRendererSoftLineBreak
{}The general idea is as follows: first,\markdownRendererSoftLineBreak
{}suppose that we are dealing with a simpler problem\markdownRendererSoftLineBreak
{}of taking modulus by a constant modulo,\markdownRendererSoftLineBreak
{}e.g. \markdownRendererInlineMath{\mathrm{mod}\,5}.\markdownRendererSoftLineBreak
{}Notice that there are only \markdownRendererInlineMath{5} possible answers:\markdownRendererSoftLineBreak
{}\markdownRendererInlineMath{0},\markdownRendererInlineMath{1},\markdownRendererInlineMath{2},\markdownRendererInlineMath{3},\markdownRendererInlineMath{4}.\markdownRendererSoftLineBreak
{}As such, they can be represented by the type which\markdownRendererSoftLineBreak
{}contains exactly \markdownRendererInlineMath{5} elements;\markdownRendererSoftLineBreak
{}Church encoding \markdownRendererCite{1}+{}{}{bohm1985automatic} yields a type\markdownRendererSoftLineBreak
{}of \markdownRendererEmphasis{selectors}:\begin{mathpar}
  V_5 ≜ Π (X : \rk{j}).\,
          X → X → X → X → X → X : \rk{j + 1}
\end{mathpar}\markdownRendererInterblockSeparator
{}Its \markdownRendererInlineMath{5} different canonical values simply pick one\markdownRendererSoftLineBreak
{}of \markdownRendererInlineMath{5} inputs, to each their own.\markdownRendererInterblockSeparator
{}There is an operation of \markdownRendererEmphasis{cyclic shift} on\markdownRendererSoftLineBreak
{}selectors:\begin{mathpar}
  c_5 ≜ λ \,s\,X\,x_1\,x_2\,x_3\,x_4
      \,x_5.\,s\,X\,x_2\,x_3\,x_4\,x_5\,x_1 : V_5 → V_5
\end{mathpar}\markdownRendererInterblockSeparator
{}Iterating it \markdownRendererInlineMath{n} times on some pinned “zeroth”\markdownRendererSoftLineBreak
{}selector, we actually get the\markdownRendererSoftLineBreak
{}\markdownRendererInlineMath{(n\,\mathrm{mod}\,5)}-th selector.\markdownRendererSoftLineBreak
{}All that is left is to read the numeral back:\begin{mathpar}
  ρ ≜ λ \,s\,X\,f\,x.\,s\,X\,
      (f^0\,x)\,(f^1\,x)\,(f^2\,x)\,(f^3\,x)\,(f^4\,x) : V_5 → ℕ_j
\end{mathpar}\markdownRendererInterblockSeparator
{}Now, to model \markdownRendererInlineMath{m\,\mathrm{mod}\,n} in\markdownRendererSoftLineBreak
{}this vein, we need to \markdownRendererEmphasis{generate} \markdownRendererInlineMath{V_n}\markdownRendererSoftLineBreak
{}dynamically.\markdownRendererSoftLineBreak
{}Lucky for us, \PTSrk{} is dependently typed so\markdownRendererSoftLineBreak
{}there is hope.\markdownRendererInterblockSeparator
{}Indeed, the following construction was\markdownRendererSoftLineBreak
{}produced in a discussion with GPT-6 Astra.\markdownRendererInterblockSeparator
{}We are going to produce a combinator\markdownRendererSoftLineBreak
{}\markdownRendererInlineMath{\mathsf{Mod} : ℕ_{j + 1} → ℕ_{j + 4} → ℕ_j}.\markdownRendererSoftLineBreak
{}Fix parameter \markdownRendererInlineMath{X : \rk{j}} and consider the\markdownRendererSoftLineBreak
{}following signature for abstract \markdownRendererInlineMath{S : \rk{j + 1}}:\begin{mathpar}
  \begin{array}{ll}
    K : X → S, & z : S, \\
    L : (X → S) → X → S, & c : S → S, \\
    ρ : S → (X → X) → X → X,
    & r : (X → X) → (X → X).
  \end{array}
\end{mathpar}\markdownRendererInterblockSeparator
{}Intention is that \markdownRendererInlineMath{S} is simply-typed \markdownRendererInlineMath{V_q} for\markdownRendererSoftLineBreak
{}\markdownRendererInlineMath{X}, \markdownRendererInlineMath{z} is its “zero”, \markdownRendererInlineMath{K} is a “zero” for\markdownRendererSoftLineBreak
{}\markdownRendererInlineMath{V_{q + 1}}, \markdownRendererInlineMath{c} is a cyclic shift for \markdownRendererInlineMath{V_q},\markdownRendererSoftLineBreak
{}\markdownRendererInlineMath{L} is a cyclic shift for \markdownRendererInlineMath{V_{q + 1}},\markdownRendererSoftLineBreak
{}\markdownRendererInlineMath{ρ} converts a selector into a numeral,\markdownRendererSoftLineBreak
{}and \markdownRendererInlineMath{r} is the result of modulus computed using\markdownRendererSoftLineBreak
{}structure from the previous step (this is similar\markdownRendererSoftLineBreak
{}to the trick used to define a predecessor\markdownRendererSoftLineBreak
{}function).\markdownRendererInterblockSeparator
{}A callback accepting such a structure and returning\markdownRendererSoftLineBreak
{}\markdownRendererInlineMath{X} has the following type:\begin{align*}
  H_X ≜ \; & Π (S : \rk{j + 1}).\;
          (X → S) → S → ((X → S) → X → S) → (S → S) → \\ & (S → (X → X) → X → X) → ((X → X) → X → X) → X
          \quad : \rk{j + 3}
\end{align*}\markdownRendererInterblockSeparator
{}A type of “records” is thus\markdownRendererSoftLineBreak
{}\markdownRendererInlineMath{P_X ≜ H_X → X : \rk{j + 4}},\markdownRendererSoftLineBreak
{}with a constructor\begin{mathpar}
  ⟨ S, K, z, L, c, ρ, r⟩ ≜ λ h.\,h\,S\,K\,z\,L\,c\,ρ \,r.
\end{mathpar}\markdownRendererInterblockSeparator
{}As an initial \markdownRendererInlineMath{P_X}, we build a structure for\markdownRendererSoftLineBreak
{}\markdownRendererInlineMath{V_1} and an \markdownRendererInlineMath{r} which behaves like\markdownRendererSoftLineBreak
{}\markdownRendererInlineMath{m\,\mathrm{mod}\,0 = m}:\begin{align*}
  S_0 & ≜ X → X, \\
          K_0 & ≜ λ (a : X)(b : X).a, &
          z_0 & ≜ λ (a : X).a, \\
          L_0 & ≜ λ (t : X → S_0)(a : X)(b : X).
              t\,b\,a, &
          c_0 & ≜ λ (s : S_0).s, \\
          ρ_0 & ≜ λ (s : S_0)(g : X → X)(a : X).s\,a, &
          r_0 & ≜ δ_j\,m\,X.
\end{align*}\markdownRendererInterblockSeparator
{}Pack them into \markdownRendererInlineMath{p_0 ≜ ⟨…⟩}.\markdownRendererInterblockSeparator
{}Then, given a structure for \markdownRendererInlineMath{q},\markdownRendererSoftLineBreak
{}the structure for \markdownRendererInlineMath{q + 1} is as follows:\begin{align*}
  S^+ & ≜ X → S, \\
          K^+ & ≜ λ (a : X)(b : X).Ka, &
          z^+ & ≜ K, \\
          L^+ & ≜ λ (t : X → S^+)(a : X)(b : X).
              L\,(t\,b)\,a, &
          c^+ & ≜ L, \\
          ρ^+ & ≜ λ (t : S^+)(g : X → X)(a : X).
              ρ \,(ta)\,g\,(ga), &
          r^+ & ≜ ρ \,(m\,S\,c\,z).
\end{align*}\markdownRendererInterblockSeparator
{}A transition term can be written like this:\begin{mathpar}
  T ≜ λ (p : P_X)(h : H_X).
      p\,(λ \,S\,K\,z\,L\,c\,ρ \,r.\,
      h\,S^+\,K^+\,z^+\,L^+\,c^+\,ρ^+\,r^+) : P_X → P_X
\end{mathpar}\markdownRendererInterblockSeparator
{}Therefore, the following term computes the modulus\markdownRendererSoftLineBreak
{}with the expected behaviour.\begin{multline*}
  \mathsf{Mod}_j ≜ λ
          (m : ℕ_{j + 1})(n : ℕ_{j + 4})
          (X : \rk{j})(f : X → X)(x : X).\\
              n\,P_X\,T\,p_0\,
              (λ \,\_\,\_\,\_\,\_\,\_\,\_\,r.
                  r\,X\,f\,x).
\end{multline*}\end{proof}\markdownRendererInterblockSeparator
{}\begin{remark}Note that the only difference between uniform and\markdownRendererSoftLineBreak
{}non-uniform representations is that,\markdownRendererSoftLineBreak
{}in non-uniform representations, we are allowed to\markdownRendererSoftLineBreak
{}pick different \markdownRendererInlineMath{N_I} and \markdownRendererInlineMath{N_O} for each function.\end{remark}\markdownRendererInterblockSeparator
{}\begin{corollary}Unary functions in \markdownRendererInlineMath{\mathcal{E}^2} are uniformly\markdownRendererSoftLineBreak
{}representable in \PTSrk{}.\end{corollary}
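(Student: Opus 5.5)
The plan is to avoid composing the combinators of Theorem~\ref{non-uniform-rep}, whose cost $c_f$ grows with nested exponentiations and recursions. Instead I would route every $f ∈ \mathcal{E}^2$ through a \emph{single} machine-simulation loop of fixed shape, so that one pair of types works for the whole class. I would fix $N_O ≜ ℕ_j$ and $N_I ≜ ℕ_{j+K}$ for a constant $K$ (I expect $K = 3$ or $4$) that depends only on the shape of the simulation, never on $f$. I would use the standard characterization $\mathcal{E}^2 = \mathsf{FLINSPACE}$, with inputs and outputs in binary.

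First I need a machine model that needs no exponentiation. A linear-space Turing machine on input of bit-length $\log x$ is simulated by a counter machine with a fixed finite control and a fixed number $k$ of registers. All registers stay bounded by a polynomial $x^{a}$, since register values encode tape halves in a constant base. The machine halts within $x^{b}$ steps, because the configuration count $2^{O(\log x)}$ is polynomial in $x$. Its instructions are increment, decrement and zero-test, and the output is read from one register. So it suffices to represent, uniformly, a loop of the form ``iterate a fixed step function $p(x)$ times on an initial configuration built from $x$''.

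Second, I would encode a configuration as a Church-style record
\[
C ≜ Π (Y : \rk{j+1}).\,(V_q → ℕ_j → \dots → ℕ_j → Y) → Y .
\]
Here $V_q$ is the selector type for the $q$ control states, as in the proof of Theorem~\ref{non-uniform-rep}, and there are $k$ numeral fields. The type $C$ has a small fixed rank of about $j+3$. The step function $C → C$ works as follows. It dispatches on the control state by applying the selector to the $q$ candidate successor configurations. Increment uses $\mathsf{Add}_j$ with $\overline 1$. Zero-test iterates a constant function on a two-element selector. Predecessor uses the usual pairing trick: iterate $(a,b) ↦ (b, b+1)$ over pairs of rank-$j$ numerals, which keeps the result in $ℕ_j$ without raising its rank.

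The input numeral $m : ℕ_{j+K}$ is then used in three ways. Copies of $m$ lowered by $δ_{j+K,\,j}$ load the initial registers. The polynomial $p(m)$ is formed at rank $j+K$ using only $\mathsf{Add}$ and $\mathsf{Mul}$, which cost nothing. Finally $p(m)$ is applied at type $C$ to the step function and the initial record, and the output register is projected out. This requires $K$ to exceed the rank of $C$ by one, which is a fixed number. Padding the machine so that it idles after halting makes running exactly $p(x)$ steps harmless.

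The step I expect to be the main obstacle is the first one. The whole uniformity argument depends on reducing every $\mathcal{E}^2$ function to one polynomially clocked counter machine with polynomially bounded registers. I also have to check that no register ever needs a rank above $j$, in particular that predecessor and the conversion between the unary Church input and the machine's register contents never need $\mathsf{Exp}$ or $\mathsf{Mod}$ with a non-constant modulus. If that conversion turns out to need a genuine division, I would fall back on the $\mathsf{Mod}_j$ construction from Theorem~\ref{non-uniform-rep}. It has fixed cost $4$, so $K$ would simply be enlarged by a constant and uniformity would be preserved.
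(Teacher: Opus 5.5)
The gap is in your second step, not your first: the step function $C \to C$ has no way to decrement a register of type $\mathbb{N}_j$. In the pairing trick, the numeral $r$ being decremented is the one that iterates, so $r$ must be instantiated at the pair type. But $r : \mathbb{N}_j$ accepts only types of sort exactly $\rk{j}$; \PTSrk{} is functional, so there is no cumulativity. Church pairs of $\mathbb{N}_j$ live in $\rk{j+4}$. Even pairs over the base, $\Pi (Y : \rk{j}).\,(X \to X \to Y) \to Y$ with $X : \rk{j}$, live in $\rk{j+2}$. So the trick keeps the \emph{output} in $\mathbb{N}_j$ only at the price of an input of higher rank, e.g.\ $\mathrm{pred} : \mathbb{N}_{j+2} \to \mathbb{N}_j$, and every decrement consumes rank.

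The only rank-changing maps at hand go downwards, namely the lowerings $\delta$. Raisings cannot help: a family of value-preserving raisings $\mathbb{N}_j \to \mathbb{N}_{j+1}$ would, via Theorem~\ref{non-uniform-rep}, uniformly represent all elementary functions, contradicting Theorem~\ref{uniform-elementary}. Hence the loop does not close. The ranks consumed grow with the number of decrements, hence with the input, which is exactly what uniformity forbids. Your fallback has the same defect: $\mathsf{Mod}_j$ costs $4$ ranks per call, which is a constant only if it is called outside the loop.

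Your dispatch and zero-test have a milder form of the problem. A rank-$j$ register can only iterate over a selector type living in $\rk{j}$, and neither such a selector nor your $V_q$ can be eliminated into $C$, which lives in $\rk{j+4}$ (not $\rk{j+3}$). These two are repairable by splitting cases pointwise, e.g.\ $\lambda X\,f\,x.\;r\,X\,(\lambda \_.\,b\,X\,f\,x)\,(a\,X\,f\,x) : \mathbb{N}_j$ for ``if $r = 0$ then $a$ else $b$''. Nothing analogous rescues the decrement.

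The paper avoids any fixed-rank simulation. It takes a single binary function $\mathsf{ev} \in \mathcal{E}^3$ universal for unary functions of $\mathcal{E}^2$, so that $\mathsf{ev}(\mathtt{f}, n) = f(n)$ for a code $\mathtt{f}$. It applies Theorem~\ref{non-uniform-rep} once to obtain $E : \mathbb{N}_r \to \mathbb{N}_r \to \mathbb{N}_0$. Each $f$ is then represented by the normal form of $E\,\overline{\mathtt{f}}$, with the fixed types $N_I = \mathbb{N}_r$ and $N_O = \mathbb{N}_0$. All input-dependent iteration is thereby buried inside the non-uniform representation of one function, whose cost $r$ is paid once, and the code of $f$ becomes just another numeral argument. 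Your machine-model reduction is still good intuition for why such a universal function is elementary. The rank bookkeeping, however, should be delegated to Theorem~\ref{non-uniform-rep} rather than redone inside a loop body that must map a fixed type to itself.
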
\markdownRendererInterblockSeparator
{}\begin{proof}There is a binary function \markdownRendererInlineMath{\mathsf{ev} : ℕ^2 → ℕ}\markdownRendererSoftLineBreak
{}in \markdownRendererInlineMath{\mathcal{E}^3} which is universal for unary\markdownRendererSoftLineBreak
{}functions in \markdownRendererInlineMath{\mathcal{E}^2} \markdownRendererCite{1}+{}{Section\markdownRendererNbsp{}5,\markdownRendererNbsp{}p.\markdownRendererNbsp{}873,\markdownRendererNbsp{}footnote\markdownRendererNbsp{}5}{Skordev2008}.\markdownRendererSoftLineBreak
{}In other words, for each \markdownRendererInlineMath{f : ℕ → ℕ} in\markdownRendererSoftLineBreak
{}\markdownRendererInlineMath{\mathcal{E}^2}, there is a code \markdownRendererInlineMath{\mathtt{f} ∈ ℕ}\markdownRendererSoftLineBreak
{}such that\begin{mathpar}
  ∀ n, \quad \mathsf{ev}(\mathtt{f}, n) = f(n).
\end{mathpar}\markdownRendererInterblockSeparator
{}And \markdownRendererInlineMath{\mathcal{E}^3} is precisely elementary\markdownRendererSoftLineBreak
{}recursive functions, so by Theorem \markdownRendererLink{\markdownRendererHash{}non-uniform-rep}{\markdownRendererHash{}non-uniform-rep}{#non-uniform-rep}{}\markdownRendererSoftLineBreak
{}there is a cost \markdownRendererInlineMath{r} and a term\markdownRendererSoftLineBreak
{}\markdownRendererInlineMath{E : ℕ_r → ℕ_r → ℕ_0} such that\begin{mathpar}
  ∀ m n, \quad E\,\overline{m}\,\overline{n} =_β \overline{\mathsf{ev}(m, n)}
\end{mathpar}\markdownRendererInterblockSeparator
{}Therefore, there exist \markdownRendererInlineMath{N_I ≔ ℕ_r} and\markdownRendererSoftLineBreak
{}\markdownRendererInlineMath{N_O ≔ ℕ_0} such that, for each unary\markdownRendererSoftLineBreak
{}\markdownRendererInlineMath{f ∈ \mathcal{E}^2}, there is a term\markdownRendererSoftLineBreak
{}\markdownRendererInlineMath{F ≔ E\,\overline{\mathtt{f}}} such that\begin{mathpar}
  ∀ n, \quad F\,\overline{n} =_β E\,\overline{\mathtt{f}}\,\overline{n} =_β \overline{\mathsf{ev}(\mathtt{f}, n)} = \overline{f(n)}.
\end{mathpar}\end{proof}\markdownRendererInterblockSeparator
{}\begin{remark}While our system appears to be restricted to\markdownRendererSoftLineBreak
{}elementary recursive functions,\markdownRendererSoftLineBreak
{}it turns out that there is another dependently\markdownRendererSoftLineBreak
{}typed system in which functions on natural numbers are\markdownRendererSoftLineBreak
{}\markdownRendererEmphasis{primitive} recursive \markdownRendererCite{1}+{}{}{buchholtz2024primitive}.\markdownRendererSoftLineBreak
{}Interestingly enough, part of their solution is in\markdownRendererSoftLineBreak
{}forbidding function types from appearing in the\markdownRendererSoftLineBreak
{}lowest universe, just like in \PTSrk{}.\end{remark}\markdownRendererInterblockSeparator
{}There is also a boundary result stemming from\markdownRendererSoftLineBreak
{}information-theoretical considerations.\markdownRendererInterblockSeparator
{}\begin{theorem}\label{rep-bound}If a function \markdownRendererInlineMath{f : ℕ^k → ℕ} is representable in\markdownRendererSoftLineBreak
{}\PTSrk{}, it is asymptotically bounded above by an\markdownRendererSoftLineBreak
{}elementary recursive function of the size of the input\markdownRendererSoftLineBreak
{}encoding.\end{theorem}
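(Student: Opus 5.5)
The plan is to go from the size of the output numeral to the size of a normal form, and bound the latter using Theorem~\ref{bounded-sn}. Fix a representation $F : N_I → … → N_I → N_O$ of $f$, where $N_I : \rk{r}$ and $N_O : \rk{s}$ are fixed. Normal forms of this representation are fixed as well. Here ``representable'' means representable in the sense of the definition preceding Theorem~\ref{uniform-elementary}, applied to the single function $f$; the Church-numeral representations of Theorem~\ref{non-uniform-rep} are a special case. Given an input $(n_1,…,n_k)$, set $t ≜ F\,[n_1]^I\,…\,[n_k]^I$. This is a closed, well-typed term of size $|F| + \sum_i |[n_i]^I| + O(k)$, which is linear in the size $N$ of the input encoding.

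First I would show that $t$ is $d$-bounded for some $d$ that depends only on $F$, $N_I$ and $N_O$, and not on the inputs. Since $F$ and all $[n_i]^I$ are in normal form, every redex of $t$ has $F$ (or a partial application $F\,[n_1]^I…[n_j]^I$) as its head. If $F$ is not an abstraction, then there are no redexes at all. Otherwise the heads are the successive $λ$-prefixes of $F$, whose types are the suffixes $N_I → … → N_O$. The ranks of these types are at most $\max(r+1, s)$, and this uses the $\mathsf{Rule}$ relation, which gives the rank of a function type; that rank is fixed. So $d ≜ \max(r+1,s)$ works.

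Next I would apply the algorithm $\mathcal{A}$ from the proof of Theorem~\ref{bounded-sn} $d$ times. The size estimate $|t'| ≤ (2+ε)^{O(|t|)}$ per step shows that the normal form $\mathrm{nf}(t)$ has size at most $\exp_2^{(d)}(C\cdot N)$, an iterated exponential of fixed height $d$. This bound is elementary in $N$. By confluence and Theorem~\ref{normalization}, $\mathrm{nf}(t)$ is the unique normal form of $t$. Since $[f(\vec n)]^O$ is in normal form and $t =_β [f(\vec n)]^O$, we get $\mathrm{nf}(t) = [f(\vec n)]^O$. Hence $|[f(\vec n)]^O| ≤ \exp_2^{(d)}(C N)$.

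The last step, and the main obstacle, is to turn a bound on the \emph{size} of the output encoding into a bound on the \emph{value} $f(\vec n)$. This needs the encoding $[\cdot]^O$ to be injective, which it is, since distinct numbers must have distinct normal forms for the representation to be meaningful. Then at most $K^{L}$ distinct normal terms of size $≤ L$ exist over a fixed finite signature. The signature is finite because $[m]^O$ are closed normal terms of a fixed type: up to $α$-equivalence, variables can be counted by de Bruijn indices bounded by $L$, and the ranks occurring can be bounded. Ranks are the delicate point, since $\rk{r}$ literals could a priori be arbitrary. I would handle this by counting terms whose sort literals have index at most $L$ as well; this still gives at most $L^{O(L)}$ terms. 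So among $f(\vec n)$ and all smaller values one cannot fit more than $2^{O(L\log L)}$ numbers. Hence $f(\vec n) ≤ 2^{O(L \log L)}$ with $L = \exp_2^{(d)}(CN)$, which is still elementary in $N$. For Church numerals one can skip the counting, since $|\overline{m}| ≥ m$ directly.
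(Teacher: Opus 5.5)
Up to the size bound on the normal form, you follow the paper's proof exactly. The normal form of $F\,[n_1]^I\ldots[n_k]^I$ is $[f(\vec n)]^O$ as a word, and running the greedy normalizer from Theorem~\ref{bounded-sn} a fixed number $d$ of times bounds its length by a tower of fixed height in the input size. Your $d=\max(r+1,s)$ is, if anything, more careful than the paper's $r+1$. The step that fails is the last one. Counting shows that at most $2^{O(L\log L)}$ numbers have an output encoding of size at most $L$, but it says nothing about how large any single one of them is. The values $0,1,\ldots,f(\vec n)-1$ are not known to have short encodings, so there is no set of $f(\vec n)+1$ short terms to count. Injectivity does not help. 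Take $N_I=N_O=\Pi(X:\rk{0}).\,(X\to X)\to X\to X$, $[n]^I=\overline{n}$, $[m]^O=\overline{h(m)}$ for a bijection $h$ of the naturals, and $F=\lambda n.\,n$. Then $F\,\overline{n}=_\beta\overline{n}=[h^{-1}(n)]^O$, so $F$ represents $h^{-1}$. This $h^{-1}$ may swap a sparse sequence $a_i$ with Ackermann-size values, while $|[h^{-1}(a_i)]^O|=|\overline{a_i}|$ stays linear in the input size.

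So turning a size bound into a value bound needs a hypothesis on the output encoding, and no counting argument can replace it. The paper's proof simply asserts $|[x]^O|=\Omega(\log x)$ ``by combinatorial considerations'', which is exactly such a hypothesis. It holds for Church numerals, where, as you note, $|\overline{m}|\ge m$. It also holds for any encoding whose sizes are nondecreasing in $m$: then every $m'\le f(\vec n)$ really does have an encoding of size at most $L$, and your count goes through. It fails, however, for arbitrary injective encodings. State the hypothesis explicitly, e.g.\ $|[m]^O|\ge c\log m$ for large $m$, which gives $f(\vec n)\le 2^{L/c}$ at once; the rest of your argument is then complete. A smaller point of the same kind: bounding sort indices by $L$ presupposes that the size measure charges $\rk{r}$ for its index. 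Otherwise $\lambda X\,g.\,g\,\rk{m}$, of type $\Pi(X:\rk{0}).\,(\rk{0}\to X)\to X$, gives infinitely many closed normal terms of constant size.
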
\markdownRendererInterblockSeparator
{}\begin{proof}Since it is representable, there are\markdownRendererSoftLineBreak
{}\markdownRendererInlineMath{N_I : \rk{r}}, \markdownRendererInlineMath{N_O : \rk{s}} and\markdownRendererSoftLineBreak
{}\markdownRendererInlineMath{F : N_I → … → N_I → N_O} such that \markdownRendererInlineMath{F} represents\markdownRendererSoftLineBreak
{}\markdownRendererInlineMath{f}.\markdownRendererSoftLineBreak
{}Therefore,\begin{mathpar}
  ∀ (n_1, …, n_k), \quad
  F\,[n_1]^I\,…\,[n_k]^I =_β [f(n_1, …, n_k)]^O
\end{mathpar}\markdownRendererInterblockSeparator
{}Since terms are definitionally equal,\markdownRendererSoftLineBreak
{}they must have the same normal form which is then\markdownRendererSoftLineBreak
{}equal to \markdownRendererInlineMath{[f(n_1, …, n_k)]^O} as a word.\markdownRendererSoftLineBreak
{}Looking at the proof of Theorem \markdownRendererLink{\markdownRendererHash{}bounded-sn}{\markdownRendererHash{}bounded-sn}{#bounded-sn}{},\markdownRendererSoftLineBreak
{}its length is then bounded above by\markdownRendererSoftLineBreak
{}\markdownRendererInlineMath{\mathcal{O}(c ↑ … ↑ c ↑ i(n_1, …, n_k))} where\markdownRendererSoftLineBreak
{}\markdownRendererInlineMath{i(n_1, …, n_k)} is the length of\markdownRendererSoftLineBreak
{}\markdownRendererInlineMath{F\,[n_1]^I\,…\,[n_k]^I} as a word.\markdownRendererSoftLineBreak
{}The height of the tower of exponents depends not on\markdownRendererSoftLineBreak
{}the \markdownRendererInlineMath{n_i}-s, but on the \markdownRendererInlineMath{d}-bound which is equal\markdownRendererSoftLineBreak
{}to \markdownRendererInlineMath{r + 1} so this bound is \textsc{ELEMENTARY}.\markdownRendererInterblockSeparator
{}By combinatorial considerations,\markdownRendererSoftLineBreak
{}length of \markdownRendererInlineMath{[x]^O} is \markdownRendererInlineMath{Ω(\log{x})} so\begin{multline*}
  f(n_1, …, n_k) = \mathcal{O}(e ↑ \left(c
  ↑ … ↑ c ↑ i(n_1, …,
           n_k)\right)) = \\ = \mathcal{O}(e ↑ c ↑ … ↑ c
  ↑ i(n_1, …, n_k)),
\end{multline*}\markdownRendererInterblockSeparator
{}thus bounded by an elementary recursive function\markdownRendererSoftLineBreak
{}of the size of input encoding.\end{proof}\markdownRendererInterblockSeparator
{}In a particular case where elementary recursive\markdownRendererSoftLineBreak
{}encoding is used, function representable in\markdownRendererSoftLineBreak
{}\PTSrk{} is bounded by elementary recursive\markdownRendererSoftLineBreak
{}function.\markdownRendererInterblockSeparator
{}\begin{corollary}Functions representable in \PTSrk{} with\markdownRendererSoftLineBreak
{}elementary recursive bound on input encoding size\markdownRendererSoftLineBreak
{}are precisely elementary recursive functions.\end{corollary}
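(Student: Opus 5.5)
The corollary asserts two inclusions, and I would prove each by invoking one of the two preceding theorems. Throughout I read ``representable'' as in the definition of (uniform) representability: a single fixed pair $N_I : \rk{r}$, $N_O : \rk{s}$ with encodings $[\cdot]^I$, $[\cdot]^O$ and a normal-form representation $F$. ``Elementary recursive bound on input encoding size'' I take to mean that $n \mapsto |[n]^I|$ is bounded by an elementary function of $n$.

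For the inclusion ``elementary $\subseteq$ representable'' I would use Theorem~\ref{non-uniform-rep}. Given elementary $f : ℕ^k → ℕ$, take its cost $c_f$ and base rank $j = 0$. Set $N_I \coloneqq ℕ_{c_f}$ and $N_O \coloneqq ℕ_0$, with Church literals as encodings, and let $F \coloneqq F_0$ after normalizing it, which is permitted by the remark on normal forms. The literal $\overline{n}$ has size $\mathcal{O}(n)$, so the input encoding bound is linear and hence elementary. This direction is therefore immediate.

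For the inclusion ``representable $\subseteq$ elementary'' I would not rely on the growth bound of Theorem~\ref{rep-bound} alone, since being bounded does not make a function computable. Instead I would extract an explicit algorithm from Theorem~\ref{bounded-sn}. On input $(n_1,\dots,n_k)$ the algorithm proceeds in four steps.
\begin{enumerate}
\item Write down the term $t \coloneqq F\,[n_1]^I\dots[n_k]^I$.
\item Normalize $t$ using the elementary normalizer for $d$-bounded terms.
\item Decode the normal form, which equals $[f(\vec n)]^O$ as a word by confluence and uniqueness of normal forms.
\item Output the decoded value.
\end{enumerate}

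The step I expect to need real care is checking that every piece of this algorithm is elementary uniformly in the inputs; there are three points to settle.
\begin{enumerate}
\item Producing $[n]^I$ must be elementary, not merely bounded in size. I would strengthen the hypothesis, or read it as intended, to mean that the encoding function itself is elementary (as it is for Church numerals). Then writing down $t$ costs time elementary in $\vec n$.
\item The term $t$ must be $d$-bounded for a $d$ independent of $\vec n$. Every $[n_i]^I$ is in normal form, so it contains no redexes. $F$ is also normal. Hence the only redex heads in $t$ are $F$ and the partial applications $F\,[n_1]^I\dots[n_i]^I$, whose types are $N_I → \dots → N_O$. Their ranks are bounded by $\max(r+1,s)$, a constant. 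So Theorem~\ref{bounded-sn} with this fixed $d$ gives an algorithm elementary in $|t|$, and $|t|$ is elementary in $\vec n$. Composing elementary functions keeps us in the class.
\item Decoding $[\cdot]^O$ requires either that the output encoding is elementarily invertible, or a bounded search. For the search, Theorem~\ref{rep-bound} bounds $f(\vec n)$ by an elementary function $B(\vec n)$. One can then search $m \le B(\vec n)$ for the $m$ with $[m]^O$ equal to the computed normal form; this bounded minimization is elementary provided $[\cdot]^O$ is elementary.
\end{enumerate}
Under these readings $f$ is elementary, and combining both inclusions yields the claimed equality.
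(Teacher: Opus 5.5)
Under your strengthened reading (elementary $[\cdot]^I$, elementarily decodable $[\cdot]^O$) the argument is correct. Its skeleton matches the paper's: Theorem~\ref{non-uniform-rep} with Church numerals for one inclusion, and the elementary normalizer for the other.

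The paper compresses the converse into an appeal to ``computable within an elementary time bound implies elementary'' together with Theorem~\ref{rep-bound}. You instead compose the normalizer of Theorem~\ref{bounded-sn}, at a fixed $d$, with encoding and decoding maps, and then use closure under composition. Your route surfaces hypotheses that the literal statement omits, and they are genuinely needed. Suppose there is only a size bound on $[\cdot]^I$. Take $[n]^I$ to be the rank-$1$ Church numeral of $\pi(n)$, for a non-computable permutation $\pi$ with $\pi(n)\le n+1$. Then the normal term $\delta_0$ represents $\pi$, with rank-$0$ Church numerals as outputs.

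Of your two decoding options, keep the first and drop the second. The bounded search needs the bound $B$ from Theorem~\ref{rep-bound}. That theorem's step $|[x]^O|=\Omega(\log x)$ is only a counting fact, and it fails pointwise for injective elementary output encodings. Here is a counterexample.
\begin{itemize}
\item Let $A$ be the tower function, $A(0)=1$ and $A(k+1)=2^{A(k)}$.
\item Put $g(A(k))=2k+1$, and $g(m)=2m+2$ for $m$ outside the range of $A$; this is an elementary injection.
\item Let $[m]^O$ be the rank-$0$ Church numeral of $g(m)$.
\end{itemize}
Then the normal form of $\lambda n.\,\mathsf{Add}_0\,(\mathsf{Add}_0\,(\delta_0\,n)\,(\delta_0\,n))\,\overline{1}$ represents $A$ on rank-$1$ Church inputs of linear size. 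Yet $A$ is not elementary.

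So an elementary $[\cdot]^O$ does not yield $B$. The paper's own proof inherits this gap through Theorem~\ref{rep-bound}. Elementary decodability of $[\cdot]^O$ is the hypothesis that actually carries the converse, and it makes the search superfluous. Church numerals satisfy it, so the corollary holds in your strengthened form.
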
\markdownRendererInterblockSeparator
{}\begin{proof}Since functions computable under elementary\markdownRendererSoftLineBreak
{}recursive time bound are elementary recursive \markdownRendererCite{1}+{}{}{CookKapron2017PrimitiveRecursive},\markdownRendererSoftLineBreak
{}this follows from Theorems \markdownRendererLink{\markdownRendererHash{}non-uniform-rep}{\markdownRendererHash{}non-uniform-rep}{#non-uniform-rep}{},\markdownRendererSoftLineBreak
{}\markdownRendererLink{\markdownRendererHash{}rep-bound}{\markdownRendererHash{}rep-bound}{#rep-bound}{}.\end{proof}\markdownRendererInterblockSeparator
{}Two questions remain open: a) whether there are\markdownRendererSoftLineBreak
{}non-elementary functions which are non-uniformly\markdownRendererSoftLineBreak
{}computable in \PTSrk{} because of non-elementary\markdownRendererSoftLineBreak
{}input encoding and b) whether there are any\markdownRendererSoftLineBreak
{}functions which are not in \markdownRendererInlineMath{\mathcal{E}^2} and are\markdownRendererSoftLineBreak
{}uniformly computable in \PTSrk{}.\markdownRendererInterblockSeparator
{}
\markdownRendererSectionEnd \markdownRendererSectionBegin
\markdownRendererHeaderAttributeContextBegin
\markdownRendererAttributeIdentifier{rk-model}\markdownRendererHeadingOne{Set-theoretical model of \PTSrk{}}\markdownRendererHeaderAttributeContextEnd{}\markdownRendererInterblockSeparator
{}Now, studying computational properties of a\markdownRendererSoftLineBreak
{}calculus is great, but logic is only useful when\markdownRendererSoftLineBreak
{}it can tell right from wrong.\markdownRendererSoftLineBreak
{}In this section, we are going to provide\markdownRendererSoftLineBreak
{}nontrivial set-theoretical semantics for \PTSrk{}\markdownRendererSoftLineBreak
{}with respect to which it is sound,\markdownRendererSoftLineBreak
{}and where \markdownRendererInlineMath{⟦Π (X : \rk{j}).X⟧} is going to be an\markdownRendererSoftLineBreak
{}empty set, as expected.\markdownRendererParagraphSeparator
{}However, we are going to need an unusual assumption:\markdownRendererInterblockSeparator
{}\begin{theorem}\PTSrk{} has a sound model in ZF+GCH. \formalized{rocq-ai:ZFC_model/ModelSoundness.v\#L133}\end{theorem}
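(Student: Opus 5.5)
The plan is to build a \emph{cardinality-indexed} model, where each type is read as a set whose size is bounded according to its rank. Every context $Γ$ is read as a set $⟦Γ⟧$ of environments, and every term $Γ ⊢ t : T$ as a function $ρ ↦ ⟦t⟧_ρ$ with $⟦t⟧_ρ ∈ \mathrm{El}(⟦T⟧_ρ)$. Here $\mathrm{El}$ decodes type-codes into sets, and I keep $\mathrm{El}$ separate from the identity on purpose, because of the circular axiom $\rk{0} : \rk{0}$. The model interprets $\mathrm{El}(⟦Π(x:T).U⟧_ρ)$ as the full set-theoretic dependent product $\prod_{a ∈ \mathrm{El}(⟦T⟧_ρ)} \mathrm{El}(⟦U⟧_{ρ[x↦a]})$, application as function application, and $λ$ as graph formation. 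Soundness of \textsc{Beta} is then the usual substitution lemma $⟦t[u/x]⟧_ρ = ⟦t⟧_{ρ[x↦⟦u⟧_ρ]}$. By the characterization of $(=_β)$, \textsc{Conv} follows from invariance of $⟦\cdot⟧$ under one-step reduction, provided the interpretation is total on well-typed terms in well-formed contexts (established by induction on derivations, using Theorem~\ref{type-is-correct}). Empty $⟦Π(X:\rk{j}).X⟧$ will come for free, since $\rk{j}$ contains the code of an empty type.

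\textbf{Size invariant and the role of GCH.} The rank discipline enters through the following invariant. Fix cardinals $λ_0 < λ_1 < \dots$, and require that a code in $⟦\rk{r}⟧$ decodes to a set of size at most $λ_r$. The rule $(\rk{r}, \rk{s}, \rk{(r+1) ∨ s})$ then requires
\[
  λ_s^{λ_r} ≤ λ_{\max(r+1,s)}.
\]
Under GCH we have $κ^{μ} ≤ \max(κ, μ^+)$ for infinite $κ, μ$ (with equality cases governed by cofinality). So taking $λ_r = ℵ_r$, or more safely $λ_r$ successor cardinals $ℵ_{α+r}$ so that cofinality issues vanish, the rule is sound exactly because the domain's rank is incremented. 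This is the step where GCH is used, and the step where omitting the rule $(\rk{0},\rk{0},\rk{0})$ matters: $\rk{0}$ is never asked to be closed under exponentiation, and the Rule clause only ever adds one successor per nesting of domains.

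\textbf{Main obstacle: the axioms $\rk{r} : \rk{0}$, in particular $\rk{0} : \rk{0}$.} A naive reading $⟦\rk{r}⟧ = \{\text{sets of size} ≤ λ_r\}$ with $\mathrm{El} = \mathrm{id}$ fails on two counts. It violates foundation, and it violates the size bound, since $⟦\rk{r}⟧$ would have size above $λ_0$. What I would try first is to make codes \emph{cardinals} and let $\mathrm{El}$ be non-trivial only on codes of sorts. Rank-$0$ types never occur as Π-domains except at cost of a rank, so nothing in the proof of soundness ever inspects the elements of $\mathrm{El}(⟦\rk{r}⟧)$ other than by passing them to $\mathrm{El}$. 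Concretely:
\begin{itemize}
\item $⟦\rk{0}⟧$ is a set $C$ of codes of size $≤ λ_0$.
\item $C$ contains one distinguished code $σ_r$ per sort.
\item $\mathrm{El}(σ_r)$ is the set of codes of rank-$r$ types, collapsed to size $λ_0$ by choosing, for each bounded cardinality, a single representative.
\end{itemize}
This collapse is admissible because the interpretation of a type depends only on its cardinality and its Π-structure, which is itself computed from codes. I expect this construction to be the delicate part. I would verify it by checking that the Ax clause and the Rule clause never both constrain the same code, so that a least fixed point of the defining equations for $C$ and $\mathrm{El}$ exists by a transfinite recursion of length $ω$ over ranks. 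If the collapse turns out to break the substitution lemma for type-level variables $X : \rk{j}$, the fallback is to interpret type variables parametrically: take $⟦Γ⟧$ to range only over environments assigning genuine sets of bounded size to type variables, and show that every well-typed term's value is invariant under changing a representative to an equinumerous set.
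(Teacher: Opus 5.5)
\textbf{There is a genuine gap: the interpretation of $\Pi$-types.} Your size invariant and the GCH estimate $\aleph_n^{\aleph_m}\le\aleph_{\max(m+1,n)}$ are exactly what the paper uses. Your collapsed $\operatorname{El}(\sigma_r)$ (one code per cardinal $\le\lambda_r$, plus the sort codes) is essentially its countable universe $U_r=\{c_\kappa:\kappa\le\aleph_r\}\cup\{s_j: j\ge 0\}$.

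In a PTS a $\Pi$-type is also a term, of type $\rk{\max(r+1,s)}$. So $[\![\Pi(x:T).U]\!]_\rho$ must itself be one of the collapsed codes in $\operatorname{El}(\sigma_{\max(r+1,s)})$. It cannot decode to the actual dependent product, as your first paragraph stipulates. This is forced, not a bookkeeping choice:
\begin{itemize}
\item Work in the context $N:\rk{0},\,F:N\to\rk{0}$.
\item Let $\rho(N)$ decode to a set of size $\lambda_0$, and let $\rho(F)$ range over the $2^{\lambda_0}$ maps into the codes of a one-element and a two-element type.
\item The products $\prod_n\operatorname{El}(\rho(F)(n))$ are then pairwise distinct, so $\Pi(n:N).F\,n$ would take $2^{\lambda_0}>\lambda_0$ values in $\operatorname{El}(\sigma_1)$.
\item But $\sigma_1\in\operatorname{El}(\sigma_0)$ forces $|\operatorname{El}(\sigma_1)|\le\lambda_0$.
\end{itemize}
Once $\Pi$-types are collapsed, their elements are no longer functions. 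Then ``$\lambda$ is graph formation, application is application'' and your substitution-lemma argument for \textsc{Beta} no longer make sense, and your plan does not say what replaces them.

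\textbf{The missing idea is transport along chosen bijections.} For every semantic pair $(a,b)$ with $a\in U_m$ and $b:\operatorname{El}(a)\to U_n$, choose once and for all a bijection $f_{a,b}:\prod_{x\in\operatorname{El}(a)}\operatorname{El}(b(x))\simeq\kappa$. Here $\kappa$ is the cardinality of the product, so $\kappa\le\aleph_{\max(m+1,n)}$ by GCH; AC is available since GCH implies it. Then:
\begin{itemize}
\item interpret the $\Pi$-type as $c_\kappa$;
\item interpret abstraction as $f_{a,b}$ applied to the graph;
\item interpret application as $f_{a,b}^{-1}$ followed by evaluation.
\end{itemize}
\textsc{Beta} holds because $f_{a,b}^{-1}\circ f_{a,b}=\mathrm{id}$. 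This works only because the choice depends on $(a,b)$ alone, not on the derivation or the syntactic form of the $\Pi$-type. A $\lambda$ and the application consuming it may carry merely $\beta$-equal $\Pi$-types, and \textsc{Conv} must not change which bijection is used.

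With this device you need no least fixed point or recursion over ranks. Sort codes are just atoms, and the universes can be written down directly, as in the paper. Your fallback is not needed, and as stated it is false: the value of $\lambda(x:X).x$ changes when $X$ is replaced by an equinumerous set. At best it is transported along a bijection, and proving that would require a logical-relations argument you have not supplied.
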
\markdownRendererInterblockSeparator
{}\begin{proof}Produced in discussion with GPT-6 Astra.\markdownRendererInterblockSeparator
{}Take distinct codes \markdownRendererInlineMath{c_κ} for every cardinal \markdownRendererInlineMath{κ}\markdownRendererSoftLineBreak
{}below \markdownRendererInlineMath{ℵ_ω} and distinct additional codes \markdownRendererInlineMath{s_j}\markdownRendererSoftLineBreak
{}for \markdownRendererInlineMath{j ∈ ℕ}.\markdownRendererSoftLineBreak
{}Define\begin{mathpar}
  U_i = \{c_κ : κ ≤ ℵ_i\} ∪ \{s_j : j ∈ ℕ\},
  \qquad
  \operatorname{El}(c_κ ) = κ, \quad
  \operatorname{El}(s_j) = U_j.
\end{mathpar}\markdownRendererInterblockSeparator
{}That is,\begin{align*}
  U_0 & = \{c_0, c_1, …, c_ω = c_{ℵ_0},
                  s_0, s_1, …\}, \\
          U_1 & = \{c_0, c_1, …, c_{ℵ_0},
                  c_{ℵ_1}, s_0, s_1, …\}, \\
          …\\
          U_k & = \{
              c_0, c_1, …, c_{ℵ_0},
              c_{ℵ_1}, …, c_{ℵ_k},
              s_0, s_1, …\}\\
          …
\end{align*}\markdownRendererParagraphSeparator
{}(Note that \markdownRendererInlineMath{x ∈ U_i ⇒ |\mathrm{El}(x)| ≤ ℵ_i}\markdownRendererSoftLineBreak
{}since all \markdownRendererInlineMath{U_i} are countable.)\markdownRendererInterblockSeparator
{}Interpretation then goes as follows:\markdownRendererInterblockSeparator
{}\markdownRendererUlBegin
\markdownRendererUlItem Interpret \markdownRendererInlineMath{\rk{i}} as \markdownRendererInlineMath{s_i}.\markdownRendererSoftLineBreak
{}The sort axiom holds because\markdownRendererSoftLineBreak
{}\markdownRendererInlineMath{s_i ∈ U_0 = \operatorname{El}(s_0)}.\markdownRendererUlItemEnd 
\markdownRendererUlItem If \markdownRendererInlineMath{a ∈ U_m} and \markdownRendererInlineMath{b(x) ∈ U_n} for every\markdownRendererSoftLineBreak
{}\markdownRendererInlineMath{x ∈ \operatorname{El}(a)}, the ordinary\markdownRendererSoftLineBreak
{}dependent function set has cardinality at most\markdownRendererParagraphSeparator
{}\markdownRendererDisplayMath{\left|∏_{x ∈ \operatorname{El}(a)}
\operatorname{El}(b(x))\right| ≤ ℵ_n^{ℵ_m} ≤ ℵ_{\max(m + 1, n)}.
}\markdownRendererParagraphSeparator
{}(The last inequality follows from GCH since\markdownRendererSoftLineBreak
{}alephs have finite indices.)\markdownRendererParagraphSeparator
{}Thus, for all such \markdownRendererInlineMath{a, b} we can choose a\markdownRendererSoftLineBreak
{}cardinal \markdownRendererInlineMath{κ_{Π (x : a).b(x)}} (whose\markdownRendererSoftLineBreak
{}code lies in \markdownRendererInlineMath{U_{\max(m + 1, n)}}) along with\markdownRendererSoftLineBreak
{}a bijection\markdownRendererParagraphSeparator
{}\markdownRendererDisplayMath{f : \left(∏_{x ∈ \operatorname{El}(a)}
\operatorname{El}(b(x))
\right) ≃ κ_{Π (x : a).b(x)}
}\markdownRendererParagraphSeparator
{}The eligible pairs \markdownRendererInlineMath{(κ, f)} form a\markdownRendererSoftLineBreak
{}set, so ordinary choice (which holds in\markdownRendererSoftLineBreak
{}ZF+GCH \markdownRendererCite{2}+{}{}{sierpinski1947hypothese}+{}{}{specker1954verallgemeinerte}) suffices.\markdownRendererSoftLineBreak
{}Make each choice\markdownRendererSoftLineBreak
{}once for each \markdownRendererInlineMath{(a, b)}, independently of a\markdownRendererSoftLineBreak
{}typing derivation or rank bound.\markdownRendererUlItemEnd 
\markdownRendererUlItem Abstraction and application are modelled by \markdownRendererInlineMath{f}\markdownRendererSoftLineBreak
{}and \markdownRendererInlineMath{f^{-1}} respectively, as defined above.\markdownRendererUlItemEnd 
\markdownRendererUlItem Contexts are interpreted by valuations.\markdownRendererUlItemEnd 
\markdownRendererUlEnd \markdownRendererParagraphSeparator
{}It is easy to check that the typing rules are\markdownRendererSoftLineBreak
{}respected by this interpretation,\markdownRendererSoftLineBreak
{}making it a sound model.\end{proof}\markdownRendererInterblockSeparator
{}A few corollaries:\markdownRendererInterblockSeparator
{}\markdownRendererUlBeginTight
\markdownRendererUlItem Indeed, the product interpreting\markdownRendererSoftLineBreak
{}\markdownRendererInlineMath{Π (Z : \rk{0}).Z} ranges over \markdownRendererInlineMath{U_0},\markdownRendererSoftLineBreak
{}which contains \markdownRendererInlineMath{c_0}, whose decoding is \markdownRendererInlineMath{∅}.\markdownRendererSoftLineBreak
{}That product is therefore empty.\markdownRendererUlItemEnd 
\markdownRendererUlItem A system with rank cumulativity\markdownRendererSoftLineBreak
{}\markdownRendererInlineMath{t : \mathrm{rk}_j ⇒ t : \mathrm{rk}_{j + 1}} is\markdownRendererSoftLineBreak
{}sound with respect to this model;\markdownRendererSoftLineBreak
{}normalization of bounded terms remains\markdownRendererSoftLineBreak
{}elementary as in Theorem \markdownRendererLink{\markdownRendererHash{}bounded-sn}{\markdownRendererHash{}bounded-sn}{#bounded-sn}{}.\markdownRendererUlItemEnd 
\markdownRendererUlEndTight \markdownRendererInterblockSeparator
{}Note that usage of GCH is unavoidable in this\markdownRendererSoftLineBreak
{}model to validate the rules.\markdownRendererSoftLineBreak
{}We could try building a model in ZFC using\markdownRendererSoftLineBreak
{}\markdownRendererInlineMath{\beth}-s instead of \markdownRendererInlineMath{ℵ}-s, but that would break\markdownRendererSoftLineBreak
{}an assumption that \markdownRendererInlineMath{U_i} are countable,\markdownRendererSoftLineBreak
{}which would falsify axioms \markdownRendererInlineMath{\rk{n} : \rk{0}}.\markdownRendererSoftLineBreak
{}That is not a huge problem, however, since ZF+GCH is\markdownRendererSoftLineBreak
{}interpretable in ZF \markdownRendererCite{2}+{}{}{godel1938consistency}+{}{}{godel1940consistency}\markdownRendererSoftLineBreak
{}(in contrast to ZF + inaccessible cardinals which\markdownRendererSoftLineBreak
{}is not \markdownRendererCite{3}+{}{}{kanamori2003higher}+{}{}{kunen1980set}+{}{}{godel1931formal}).\markdownRendererInterblockSeparator
{}
\markdownRendererSectionEnd \markdownRendererSectionBegin
\markdownRendererHeaderAttributeContextBegin
\markdownRendererAttributeIdentifier{proof-system}\markdownRendererHeadingOne{\PTSrk{} as a proof system}\markdownRendererHeaderAttributeContextEnd{}\markdownRendererInterblockSeparator
{}To demonstrate the ability of \PTSrk{} to embed\markdownRendererSoftLineBreak
{}nontrivial logical theories, we construct a fixed\markdownRendererSoftLineBreak
{}interpretation of Tarski–Mostowski–Robinson\markdownRendererSoftLineBreak
{}arithmetic \markdownRendererInlineMath{\R} in \PTSrk{}: Church numerals and\markdownRendererSoftLineBreak
{}certified finite observations first interpret its\markdownRendererSoftLineBreak
{}relational form \markdownRendererInlineMath{\Rt}; definable graphs yield\markdownRendererSoftLineBreak
{}\markdownRendererInlineMath{\R}.\markdownRendererSoftLineBreak
{}We do so because interpretation of \markdownRendererInlineMath{\Rt} is much\markdownRendererSoftLineBreak
{}simpler than direct interpretation of \markdownRendererInlineMath{\R} and\markdownRendererSoftLineBreak
{}already exposes all the main ideas.\markdownRendererSoftLineBreak
{}Direct translation is available in supplementary\markdownRendererSoftLineBreak
{}formalized materials \formalized{rocq-ai:Interpretation_of_R/functional/DirectRTheorem7.v\#L75}.\markdownRendererParagraphSeparator
{}The following proof was obtained in a discussion\markdownRendererSoftLineBreak
{}with GPT-6 Astra.\markdownRendererInterblockSeparator
{}\markdownRendererSectionBegin
\markdownRendererHeadingTwo{Classical arithmetic and the target calculus}\markdownRendererInterblockSeparator
{}Let \markdownRendererInlineMath{\Rt} be the relational presentation of\markdownRendererSoftLineBreak
{}Tarski–Mostowski–Robinson arithmetic in the\markdownRendererSoftLineBreak
{}first-order language with equality\markdownRendererSoftLineBreak
{}\markdownRendererInlineMath{\{0, S, =, ≤, A, M\}}, with ternary predicates\markdownRendererSoftLineBreak
{}\markdownRendererInlineMath{A, M} and numerals \markdownRendererInlineMath{\bar n = S^n0}.\markdownRendererSoftLineBreak
{}Its axioms are the universal closures of the\markdownRendererSoftLineBreak
{}following schemes, indexed by external \markdownRendererInlineMath{m, n ∈ ℕ}:\begin{align}
  A(\bar m, \bar n, x) & ↔ \overline{m + n} = x,
  \tag{$\widetilde R1$}\label{rtilde:add}\\
  M(\bar m, \bar n, x) & ↔ \overline{mn} = x,
  \tag{$\widetilde R2$}\\
  \bar m & ≠ \bar n \quad (m ≠ n),
  \tag{$\widetilde R3$}\\
  x ≤ \bar n & ↔ ⋁_{i = 0}^{n}x = \bar i,
  \tag{$\widetilde R4$}\label{rtilde:bound}\\
  x ≤ \bar n & ∨ \bar n ≤ x.
  \tag{$\widetilde R5$}\label{rtilde:compare}
\end{align}\markdownRendererInterblockSeparator
{}Here \markdownRendererInlineMath{m + n} and \markdownRendererInlineMath{mn} are calculated externally:\markdownRendererSoftLineBreak
{}the language has no binary arithmetic function\markdownRendererSoftLineBreak
{}symbols.\markdownRendererSoftLineBreak
{}Section \markdownRendererLink{\markdownRendererHash{}sec:nonrelational}{\markdownRendererHash{}sec:nonrelational}{#sec:nonrelational}{} relates this theory\markdownRendererSoftLineBreak
{}to the original, functional arithmetic \markdownRendererInlineMath{\R} \markdownRendererCite{1}+{}{}{tarski1953undecidable}.\markdownRendererParagraphSeparator
{}We now begin constructing the model inside\markdownRendererSoftLineBreak
{}\PTSrk{} and proving that the axioms hold in our\markdownRendererSoftLineBreak
{}model.\markdownRendererSoftLineBreak
{}First, here are some preliminaries on the\markdownRendererSoftLineBreak
{}interpretation of first-order logic formulas:\begin{mathpar}
  ⊥ ≜ (Π (Z : \rk{0}).Z) : \rk{1}, \qquad
  ⊤ ≜ \rk{0} → \rk{0} : \rk{1}, \qquad
  \mathsf{id} ≜ λ (Z : \rk{0}).Z : ⊤.
\end{mathpar}\markdownRendererInterblockSeparator
{}Use the fixed-answer proof types\begin{gather*}
  \nc P ≜ P → ⊥, \qquad
      P \ac Q ≜ (P → Q → ⊥) → ⊥, \\
      ⋁_c(P_0, …, P_k) ≜ (P_0 → ⊥) → ⋯ → (P_k → ⊥) → ⊥, \\
      P \iffc Q ≜ (P → Q) \ac (Q → P).
\end{gather*}\markdownRendererInterblockSeparator
{}A type \markdownRendererInlineMath{P} is \markdownRendererEmphasis{stable} when \markdownRendererInlineMath{\nc\nc P → P} is\markdownRendererSoftLineBreak
{}inhabited.\markdownRendererSoftLineBreak
{}Negative types, dependent functions with stable\markdownRendererSoftLineBreak
{}codomain, and these encoded conjunctions and\markdownRendererSoftLineBreak
{}disjunctions are stable.\markdownRendererSoftLineBreak
{}They support the classical logical rules on stable\markdownRendererSoftLineBreak
{}proof types; case elimination is into stable\markdownRendererSoftLineBreak
{}targets.\markdownRendererSoftLineBreak
{}In particular, \markdownRendererInlineMath{P \oc \nc P} has the proof\markdownRendererSoftLineBreak
{}\markdownRendererInlineMath{λ (k : P → ⊥)(l : (P → ⊥) → ⊥).\,l\,k}.\markdownRendererSoftLineBreak
{}We never eliminate \markdownRendererInlineMath{⊥} into an arbitrary positive\markdownRendererSoftLineBreak
{}type.\markdownRendererParagraphSeparator
{}For a carrier \markdownRendererInlineMath{D}, a domain predicate \markdownRendererInlineMath{\Good},\markdownRendererSoftLineBreak
{}and fixed stable atomic interpretations,\markdownRendererSoftLineBreak
{}translate quantifiers by\begin{align*}
  (∀ x\,φ)^∗ & ≜ Π (x : D).\,\Good(x) → φ^∗ (x), \\
      (∃ x\,φ)^∗ & ≜ (Π (x : D).\,\Good(x) → φ^∗(x) → ⊥) → ⊥.
\end{align*}\markdownRendererInterblockSeparator
{}All translated formulas are stable.\markdownRendererSoftLineBreak
{}Once domain closure and atomic congruence are\markdownRendererSoftLineBreak
{}proved, the logical rules and equality\markdownRendererSoftLineBreak
{}substitution extend structurally to formulas on\markdownRendererSoftLineBreak
{}admitted arguments.\markdownRendererSoftLineBreak
{}All interpretations below use this one logical\markdownRendererSoftLineBreak
{}encoding.\markdownRendererParagraphSeparator
{}To lift propositions \markdownRendererInlineMath{P : \rk{a}} to\markdownRendererSoftLineBreak
{}arbitrary higher ranks \markdownRendererInlineMath{k ≥ 2},\markdownRendererSoftLineBreak
{}let\begin{mathpar}
  L_k(P) ≜ (Π (T : \rk{k - 2}).T → T) → P : \rk{k}.
\end{mathpar}\markdownRendererInterblockSeparator
{}Abstraction gives a map from \markdownRendererInlineMath{P} to \markdownRendererInlineMath{L_k(P)};\markdownRendererSoftLineBreak
{}application on the term \markdownRendererInlineMath{(λ \,T\,x.x)} gives a map in\markdownRendererSoftLineBreak
{}the opposite direction.\markdownRendererInterblockSeparator
{}
\markdownRendererSectionEnd \markdownRendererSectionBegin
\markdownRendererHeadingTwo{Unary domain and equality}\markdownRendererInterblockSeparator
{}Use Church numerals with addition and\markdownRendererSoftLineBreak
{}multiplication as a carrier: \markdownRendererInlineMath{D ≜ ℕ_1}.\markdownRendererSoftLineBreak
{}As a reminder, here are two only \markdownRendererEmphasis{constructors} of\markdownRendererSoftLineBreak
{}Church numerals, \markdownRendererInlineMath{\z} and \markdownRendererInlineMath{\suc}:\begin{mathpar}
  \z ≜ λ (X : \rk{1})(f : X → X)(x : X).x \quad
  \suc ≜ λ (n : ℕ_1)\,X\,f\,x.f\,(n\,X\,f\,x).
\end{mathpar}\markdownRendererInterblockSeparator
{}Even if the theory we interpret and the host type\markdownRendererSoftLineBreak
{}system are not strong enough to support induction\markdownRendererSoftLineBreak
{}on Church numerals, we still need to prove some\markdownRendererSoftLineBreak
{}properties by induction.\markdownRendererSoftLineBreak
{}For this, we introduce a special helper\markdownRendererSoftLineBreak
{}proposition as a domain predicate \markdownRendererInlineMath{\Good}.\markdownRendererSoftLineBreak
{}It would state for a given \markdownRendererInlineMath{d : V} that each\markdownRendererSoftLineBreak
{}inductive property \markdownRendererInlineMath{Q} is satisfied for \markdownRendererInlineMath{d}:\begin{multline*}
  \Good(d) ≜ \bigl(Π (Q : D → \rk{10}).\;
      Q(\z) → (Π (u : D).\,Q(u) → Q(\suc u)) → Q(d)\bigr)\\ : \rk{12}.
\end{multline*}\markdownRendererInterblockSeparator
{}Zero and successor preserve it by abstraction.\markdownRendererSoftLineBreak
{}For example,\begin{mathpar}
  \mathsf{good}_0 ≜ λ Q\,q_0\,q_s.
      \,q_0, \qquad
  \mathsf{good}_s\,d\,g ≜ λ Q\,q_0\,q_s.\,
      q_s\,d\,(g\,Q\,q_0\,q_s).
\end{mathpar}\markdownRendererInterblockSeparator
{}Define equality by negative tests:\begin{mathpar}
  E(d, e) ≜ Π (P : D → \rk{1}).\;
  (\nc P(d)) \iffc (\nc P(e)) \quad : \rk{5}.
\end{mathpar}\markdownRendererInterblockSeparator
{}It is reflexive, symmetric and transitive;\markdownRendererSoftLineBreak
{}every \markdownRendererInlineMath{F : D → D} respects it.\markdownRendererSoftLineBreak
{}Equality substitution for \markdownRendererInlineMath{E} itself follows from\markdownRendererSoftLineBreak
{}symmetry and transitivity.\markdownRendererSoftLineBreak
{}Define the arithmetic atoms by\begin{mathpar}
  A^∗ (d, e, f) ≜ E(\Add\,d\,e, f), \qquad
  M^∗ (d, e, f) ≜ E(\Mul\,d\,e, f).
\end{mathpar}\markdownRendererInterblockSeparator
{}They respect \markdownRendererInlineMath{E} in all arguments;\markdownRendererSoftLineBreak
{}axioms \markdownRendererInlineMath{\widetilde R1} and \markdownRendererInlineMath{\widetilde R2} hold\markdownRendererSoftLineBreak
{}definitionally.\markdownRendererInterblockSeparator
{}
\markdownRendererSectionEnd \markdownRendererSectionBegin
\markdownRendererHeadingTwo{Order from successor-closed unary tests}\markdownRendererInterblockSeparator
{}\begin{definition}A \markdownRendererEmphasis{certified unary algebra} is the fixed telescope\begin{align*}
  \mathcal U = {} & (X : \rk{1})\,(z : X)\,
  (s : X → X)\,(δ : X → \rk{4})\\
  & (d_0 : δ z)\,
  (d_s : Π (u : X).\,δ u → δ (su)).
\end{align*}\end{definition}\markdownRendererInterblockSeparator
{}Basically, this telescope contains all the\markdownRendererSoftLineBreak
{}necessary information to “reinterpret” Church\markdownRendererSoftLineBreak
{}numerals into new carrier \markdownRendererInlineMath{X} with a domain\markdownRendererSoftLineBreak
{}predicate \markdownRendererInlineMath{δ}.\markdownRendererParagraphSeparator
{}Writing \markdownRendererInlineMath{\mathcal U → B} abbreviates the\markdownRendererSoftLineBreak
{}successive dependent arrows for these binders;\markdownRendererSoftLineBreak
{}it does not introduce a record type.\markdownRendererSoftLineBreak
{}Put \markdownRendererInlineMath{\ev_{\mathcal U}(d) = d\,X\,s\,z} and\markdownRendererSoftLineBreak
{}\markdownRendererInlineMath{G(d) ≜ (\mathcal U → δ (\ev_{\mathcal U}(d))) : \rk{6}}.\markdownRendererSoftLineBreak
{}For \markdownRendererInlineMath{P : X → \rk{1}}, write\begin{mathpar}
  \Up_{\mathcal U}(P) ≜ Π (u : X).\,δ u → \nc P(u) → \nc P(su).
\end{mathpar}\markdownRendererInterblockSeparator
{}This proposition asserts that a negation of \markdownRendererInlineMath{P} is\markdownRendererSoftLineBreak
{}upwards closed for ‘good’ values in the new domain \markdownRendererInlineMath{X}.\markdownRendererSoftLineBreak
{}Define\begin{align*}
  O(d, e) ≜ \bigl(
  \mathcal U → Π (P : X → \rk{1}).\; &
  \Up_{\mathcal U}(P) → {}\\
  & \nc P(\ev_{\mathcal U}(d)) → \nc P(\ev_{\mathcal U}(e))\bigr) : \rk{6}.
\end{align*}\markdownRendererInterblockSeparator
{}With this definition, preorder laws and equality\markdownRendererSoftLineBreak
{}compatibility can be derived.\markdownRendererInterblockSeparator
{}\begin{lemma}[\formalized{rocq-ai:Interpretation_of_R/relational/DirectRUnary.v}]\label{r-common}The following holds:\markdownRendererInterblockSeparator
{}\markdownRendererUlBegin
\markdownRendererUlItem \markdownRendererInlineMath{G(\z)}, \markdownRendererInlineMath{G(d) → G(\suc d)} and\markdownRendererSoftLineBreak
{}\markdownRendererInlineMath{\Good(d) → G(d)} are inhabited.\markdownRendererUlItemEnd 
\markdownRendererUlItem \markdownRendererInlineMath{O} is reflexive and transitive.\markdownRendererUlItemEnd 
\markdownRendererUlItem \markdownRendererInlineMath{E(d, e) → O(d, e)} and \markdownRendererInlineMath{G(d) → O(d, \suc d)}\markdownRendererSoftLineBreak
{}are inhabited.\markdownRendererUlItemEnd 
\markdownRendererUlItem Arguments of \markdownRendererInlineMath{O} respect \markdownRendererInlineMath{E}.\markdownRendererUlItemEnd 
\markdownRendererUlEnd \end{lemma}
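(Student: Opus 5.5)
The plan is to build explicit inhabitants for each item. Each one is assembled from the binders of the telescope $\mathcal U$ and checked using $β$-conversion, via the \textsc{Conv} rule together with the characterization of $( =_β )$.

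\emph{Item 1.} Under the binders $(X, z, s, δ, d_0, d_s)$ we have $\ev_{\mathcal U}(\z) = \z\,X\,s\,z →_β^∗ z$. Hence $λ\,X\,z\,s\,δ\,d_0\,d_s.\,d_0$ inhabits $G(\z)$.

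For the successor step, take $g : G(d)$. We have $\suc\,d\,X\,s\,z →_β^∗ s\,(d\,X\,s\,z)$, so the term
\[
λ\,X\,z\,s\,δ\,d_0\,d_s.\; d_s\,(d\,X\,s\,z)\,(g\,X\,z\,s\,δ\,d_0\,d_s)
\]
has a type convertible to $δ(\ev_{\mathcal U}(\suc d))$.

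For $\Good(d) → G(d)$ we instantiate $Q$ with $G$. The obstacle is that $\Good$ quantifies over $Q : D → \rk{10}$, whereas $G(d) : \rk{6}$, and \PTSrk{} has no cumulativity. We therefore take $Q ≜ λ (d : D).\,L_{10}(G(d))$ instead. The proofs of $G(\z)$ and of the successor step are wrapped by abstraction over the dummy argument of $L_{10}$. The result $g\,Q\,q_0\,q_s : L_{10}(G(d))$ is unwrapped by applying it to $(λ\,T\,x.\,x)$. At this point I would check that all ranks are consistent: $\Pi(T:\rk{8}).T→T$ has rank $9$, so $L_{10}(G(d)) : \rk{10}$ as required.

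\emph{Item 2.} Reflexivity is witnessed by $λ\,\mathcal U\,P\,u\,h.\,h$. For transitivity, given $o_1 : O(d,e)$ and $o_2 : O(e,f)$, the term $λ\,\mathcal U\,P\,u\,h.\; o_2\,\mathcal U\,P\,u\,(o_1\,\mathcal U\,P\,u\,h)$ works.

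\emph{Item 3.} For $G(d) → O(d, \suc d)$, take $g : G(d)$ and $h : \nc P(\ev_{\mathcal U}(d))$ under the telescope and $u : \Up_{\mathcal U}(P)$. Then $u\,(\ev_{\mathcal U}(d))\,(g\,\mathcal U)\,h$ has type $\nc P(s(\ev_{\mathcal U}(d)))$. This is $β$-convertible to $\nc P(\ev_{\mathcal U}(\suc d))$, and \textsc{Conv} applies.

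For $E(d,e) → O(d,e)$, work under $\mathcal U$ and $P : X → \rk{1}$. Instantiate $E$ at the test $P' ≜ λ (d' : D).\,P(d'\,X\,s\,z) : D → \rk{1}$. This instance is well-typed precisely because the test is formed inside the binders of $\mathcal U$. The result has type $(\nc P'(d)) \iffc (\nc P'(e))$. We must extract its first component from the encoded conjunction $(A → B → ⊥) → ⊥$. Elimination of the fixed-answer conjunction is only allowed into stable targets. Here the target $\nc P(\ev_{\mathcal U}(d)) → \nc P(\ev_{\mathcal U}(e))$ is a function type whose codomain is a negation, hence stable. So we eliminate by double-negation elimination, using the stability witness for negative types. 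The hypothesis $\Up$ is simply ignored.

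\emph{Item 4.} Respect of $E$ follows from what is already proven. Suppose $E(d,d')$, $E(e,e')$ and $O(d,e)$. By symmetry of $E$ and the map $E → O$ we get $O(d',d)$ and $O(e,e')$. Transitivity then gives $O(d',e')$.

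\emph{Main obstacle.} The expected difficulty is not logical but bookkeeping. First, sorts must match exactly: this is the reason for the $L_{10}$ lifting in item 1. Second, the rank annotations on the binders of $\mathcal U$ and on $P$ must place $O$ and $G$ in $\rk{6}$. Third, the stable-elimination step in item 3 needs an explicitly constructed $\nc\nc$-elimination term for the implication type.
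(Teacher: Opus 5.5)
Your proposal is correct and follows the paper's route: explicit proof terms checked up to $\beta$-conversion, with respect of $E$ obtained from symmetry of $E$, the map $E(d,e) \to O(d,e)$ and transitivity of $O$. Two steps fill in details that the paper's ``obvious'' constructions leave implicit: the $L_{10}$ lift, which lets $\Good(d)$ be instantiated at a motive of rank exactly $10$, and extracting the forward component of the encoded biconditional into a negative target.
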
\markdownRendererInterblockSeparator
{}\begin{proof}Proof terms for most propositions are constructed\markdownRendererSoftLineBreak
{}in an obvious way.\markdownRendererSoftLineBreak
{}The last proposition holds because \markdownRendererInlineMath{O} is\markdownRendererSoftLineBreak
{}transitive and \markdownRendererInlineMath{E} is symmetric.\end{proof}\markdownRendererInterblockSeparator
{}Consequently, for each external \markdownRendererInlineMath{m ≤ n},\markdownRendererSoftLineBreak
{}there is a closed proof of\markdownRendererSoftLineBreak
{}\markdownRendererInlineMath{O(\overline{m}, \overline{n})}, built by a finite\markdownRendererSoftLineBreak
{}succession of steps.\markdownRendererInterblockSeparator
{}
\markdownRendererSectionEnd \markdownRendererSectionBegin
\markdownRendererHeaderAttributeContextBegin
\markdownRendererAttributeIdentifier{finite-separation}\markdownRendererHeadingTwo{Finite separation}\markdownRendererHeaderAttributeContextEnd{}\markdownRendererInterblockSeparator
{}In this section, we prove that \markdownRendererInlineMath{\widetilde R3} also\markdownRendererSoftLineBreak
{}holds in our interpretation of \markdownRendererInlineMath{\Rt}.\markdownRendererInterblockSeparator
{}\begin{lemma}[\formalized{rocq-ai:Interpretation_of_R/relational/DirectRSeparation.v\#L143}]For each \markdownRendererInlineMath{n ∈ ℕ}, there is a proof of\markdownRendererSoftLineBreak
{}\markdownRendererInlineMath{O(\overline{n + 1}, \overline{n}) → ⊥}.\end{lemma}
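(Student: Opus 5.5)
The plan is to refute $O(\overline{n+1},\overline{n})$ by instantiating its telescope $\mathcal U$ with a concrete \emph{finite} certified unary algebra. In that algebra the successor saturates at $n+1$, and the test predicate $P$ is true exactly at the values $0,\dots,n$.

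\textbf{The algebra.} For the carrier take the selector type on $n+2$ points,
$$X \;\triangleq\; \Pi (Y : \rk{0}).\,Y \to \cdots \to Y \to Y \qquad (n+2 \text{ arguments}),$$
which has rank $1$, as $\mathcal U$ requires.
\begin{itemize}
\item Let $z$ be the $0$-th selector.
\item Let $s$ be the \emph{saturating} shift $\lambda v\,Y\,y_0\ldots y_{n+1}.\,v\,Y\,y_1\ldots y_{n+1}\,y_{n+1}$.
\item Then $\overline{k}\,X\,s\,z \to_\beta^\ast s^k z$, which normalizes to the $\min(k,n+1)$-th selector.
\end{itemize}
So $\ev_{\mathcal U}(\overline{n})$ and $\ev_{\mathcal U}(\overline{n+1})$ are, up to $=_\beta$, the $n$-th and $(n+1)$-th selectors.

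\textbf{The test predicate.} Exploit $\rk{0} : \rk{0}$ by selecting among types:
$$P(u) \;\triangleq\; \Pi (Z : \rk{0}).\,u\,\rk{0}\,\underbrace{\rk{0}\ldots\rk{0}}_{n+1}\,Z \;:\; \rk{1}.$$
\begin{itemize}
\item On the $i$-th selector with $i \le n$, $P$ reduces to $\Pi Z.\rk{0}$, which is inhabited by $\lambda Z.\rk{0}$.
\item On the $(n+1)$-th selector it reduces to $\Pi Z.Z = \bot$.
\end{itemize}
Given a proof $o$ of $O(\overline{n+1},\overline{n})$, apply it to the algebra, to $P$, and to a proof of $\Up_{\mathcal U}(P)$. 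This yields $\nc P(\mathrm{sel}_{n+1}) \to \nc P(\mathrm{sel}_n)$ (using \textsc{Conv}). The argument $\nc P(\mathrm{sel}_{n+1}) =_\beta \bot \to \bot$ is inhabited by the identity. Applying the result to $\lambda Z.\rk{0} : P(\mathrm{sel}_n)$ gives $\bot$.

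\textbf{The main obstacle: $\Up_{\mathcal U}(P)$.} This must hold for an \emph{arbitrary} $u : X$ satisfying $\delta u$, and we cannot case-split on an open selector. The freedom lies in choosing $\delta$ and its certificates $d_0, d_s$. My first attempt is an internal induction principle for $X$, mirroring $\Good$:
$$\delta u \;\triangleq\; L_4\Bigl(\Pi (R : X \to \rk{1}).\,R\,z \to \bigl(\Pi v.\,R\,v \to R\,(s v)\bigr) \to R\,u\Bigr),$$
whose rank fits below $4$ before lifting. Here $d_0$ and $d_s$ are built exactly like $\mathsf{good}_0$ and $\mathsf{good}_s$.

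To discharge $\Up_{\mathcal U}(P)$, I would instantiate $R$ with a stable invariant $R(v)$. The intended $R(v)$ is an encoded disjunction $\bigvee_c$ over $i \le n+1$ of ``$v$ agrees with $\mathrm{sel}_i$ on every argument tuple from $\rk{0}$''. Agreement is expressed through negative tests, as in $E$, so that $R$ stays in $\rk{1}$.
\begin{itemize}
\item $R\,z$ holds trivially, since $z$ agrees with $\mathrm{sel}_0$.
\item $R\,v \to R\,(s v)$ holds by shifting the disjunct index, with index $n+1$ mapped to itself.
\item From $R\,u$ we get $\nc P(u) \to \nc P(s u)$ by stable case analysis on the disjunction. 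If $u$ agrees with $\mathrm{sel}_i$ for $i \le n$, then $P(u)$ is inhabited, so $\nc P(u)$ yields $\bot$ and hence anything stable. If $u$ agrees with $\mathrm{sel}_{n+1}$, then so does $s u$, and $\nc P(su)$ follows from $\nc P(u)$.
\end{itemize}

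If the ranks of this invariant overflow $\rk{1}$, I would fall back to a weaker invariant. That invariant would mention only the specific argument tuples $(\rk{0},\dots,\rk{0},Z)$ and their shifts, which is all that $P$ ever inspects.
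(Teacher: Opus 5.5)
\textbf{There is a genuine gap, at exactly the step you flag.} Everything outside $\Up_{\mathcal U}(P)$ is fine. Your polymorphic carrier, saturating shift, test $P(u) = \Pi(Z:\rk{0}).\,u\,\rk{0}\,\rk{0}\cdots\rk{0}\,Z$ and final application are a harmless variant of the paper's simply typed selectors over $\rk{1}$ with test $u\,\top\cdots\top\,\bot$. The problem is that your $\delta$ only supports elimination into motives $R : X \to \rk{1}$, and your intended invariant is not such a motive. A $\Pi$-type lies in $\rk{1}$ only if its domain lies in $\rk{0}$, and no $\Pi$-type lies in $\rk{0}$. So every hypothesis of a rank-$1$ type is atomic. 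This rules out your invariant in several ways:
\begin{itemize}
\item Any $\bigvee_c(\dots)$ has rank at least $2$, because its hypotheses $P_i \to \bot$ are $\Pi$-types. Once the disjuncts are non-atomic, as agreement statements are, the rank is at least $3$.
\item Agreement via negative tests in the style of $E$ is worse still. It quantifies over $X \to \rk{1}$, which already has rank $2$; $E$ itself lives in $\rk{5}$.
\item Your unspecified fallback hits the same wall. The available encodings of conjunction ($\ac$ or Church pairing) need non-atomic hypotheses. A single clause such as $\Pi(Z:\rk{0}).\,(su)\,\rk{0}\,\rk{0}\cdots\rk{0}\,Z \to u\,\rk{0}\,\rk{0}\cdots\rk{0}\,Z$ is not preserved by $s$ once $n \ge 1$.
\item Dropping $L_4$ does not rescue the disjunction. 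With the step premise $\Pi v.\,R\,v \to R\,(sv)$, motives $R : X \to \rk{k}$ fit into $\rk{4}$ only for $k \le 2$.
\end{itemize}

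\textbf{The missing idea is to drop the step premise.} Let $\delta$ be a finite enumeration principle,
\[
  \delta(u) \triangleq \Pi (Q : X \to \rk{3}).\,Q(\mathrm{sel}_0) \to \cdots \to Q(\mathrm{sel}_{n+1}) \to Q(u) \;:\; \rk{4}.
\]
This admits rank-$3$ motives, in particular $v \mapsto \nc P(v) \to \nc P(sv)$ itself. Then $\Up_{\mathcal U}(P)$ is a check at each selector: for $i \le n$, feed $\lambda Z.\rk{0}$ to the hypothesis $\nc P(\mathrm{sel}_i)$; at $\mathrm{sel}_{n+1}$, use the identity. The certificate $d_0$ is a projection. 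For $d_s$, instantiate the given certificate at $v \mapsto Q(sv)$ with the branch proofs shifted, using $s\,\mathrm{sel}_i =_\beta \mathrm{sel}_{\min(i+1,n+1)}$. This is the paper's proof, and it works verbatim with your carrier and test.

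If you want to keep an induction-style $\delta$, take $R : X \to \rk{2}$ (no lifting needed) with the monotonicity invariant
\begin{multline*}
  R(v) \triangleq \Pi (A_0 \ldots A_{n+1} : \rk{0}).\,(A_1 \to A_0) \to \cdots \to (A_{n+1} \to A_n) \\
  \to v\,\rk{0}\,A_1 \ldots A_{n+1}\,A_{n+1} \to v\,\rk{0}\,A_0 \ldots A_{n+1}.
\end{multline*}
It holds at $z$. It is preserved by $s$: instantiate at $(A_1, \ldots, A_{n+1}, A_{n+1})$. At the tuple $(\rk{0}, \ldots, \rk{0}, Z)$, with chain premises the identities and $\lambda w.\rk{0}$, it gives $P(su) \to P(u)$, hence $\nc P(u) \to \nc P(su)$.
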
\markdownRendererInterblockSeparator
{}\begin{proof}By contradiction.\markdownRendererSoftLineBreak
{}Suppose \markdownRendererInlineMath{H : O(\overline{n + 1}, \overline{n})};\markdownRendererSoftLineBreak
{}put \markdownRendererInlineMath{c = n + 1}, \markdownRendererInlineMath{q = c + 1},\markdownRendererSoftLineBreak
{}and remember already familiar type of selectors\begin{mathpar}
  F_q ≜ \underbrace{\rk{1} → ⋯ → \rk{1}}_{q\ \mathrm{arguments}} → \rk{1}, \qquad
  e_i ≜ λ Z_0… Z_{q - 1}.Z_i.
\end{mathpar}\markdownRendererInterblockSeparator
{}We are going to use them as a “model” to interpret\markdownRendererSoftLineBreak
{}an assumed \markdownRendererInlineMath{O} and get a contradiction:\markdownRendererSoftLineBreak
{}set \markdownRendererInlineMath{X = F_q}, \markdownRendererInlineMath{z = e_0}, and use “saturated\markdownRendererSoftLineBreak
{}addition” as a successor:\begin{mathpar}
  s(u) ≜ λ Z_0… Z_{q - 1}.\
  u\,Z_{\min(0 + 1, c)}⋯ Z_{\min(c + 1, c)}.
\end{mathpar}\markdownRendererInterblockSeparator
{}Thus, \markdownRendererInlineMath{s(e_i) =_β e_{\min(i + 1, c)}}.\markdownRendererSoftLineBreak
{}Define the domain predicate as\begin{mathpar}
  δ (u) ≜ \bigl(Π (Q : F_q → \rk{3}).\;
  Q(e_0) → ⋯ → Q(e_c) → Q(u)\bigr) : \rk{4}.
\end{mathpar}\markdownRendererInterblockSeparator
{}Selector projection gives \markdownRendererInlineMath{d_0}.\markdownRendererSoftLineBreak
{}To construct \markdownRendererInlineMath{d_s}, open the output \markdownRendererInlineMath{δ (su)} at\markdownRendererSoftLineBreak
{}\markdownRendererInlineMath{Q} and its branch proofs, and specialize the\markdownRendererSoftLineBreak
{}input certificate to \markdownRendererInlineMath{v ↦ Q(sv)}.\markdownRendererSoftLineBreak
{}Its branches are the supplied proofs for the\markdownRendererSoftLineBreak
{}appropriate successor selectors.\markdownRendererInterblockSeparator
{}Use the unary test\begin{mathpar}
  P(u) ≜ u\,\underbrace{⊤⋯⊤}_{c\ \mathrm{entries}}\,⊥.
\end{mathpar}\markdownRendererInterblockSeparator
{}To prove \markdownRendererInlineMath{\Up(P)}, eliminate \markdownRendererInlineMath{δ (u)} into the\markdownRendererSoftLineBreak
{}rank-three motive \markdownRendererInlineMath{v ↦ \nc P(v) → \nc P(sv)}.\markdownRendererSoftLineBreak
{}At \markdownRendererInlineMath{e_c} both tests are \markdownRendererInlineMath{⊥} and the implication is\markdownRendererSoftLineBreak
{}identity.\markdownRendererSoftLineBreak
{}At \markdownRendererInlineMath{e_i} with \markdownRendererInlineMath{i < c}, there is a premise \markdownRendererInlineMath{⊤ → ⊥}.\markdownRendererInterblockSeparator
{}Specializing \markdownRendererInlineMath{H : O(\overline{c}, \overline{n})}\markdownRendererSoftLineBreak
{}with these data yields \markdownRendererInlineMath{(⊥ → ⊥) → (⊤ → ⊥)}.\end{proof}\markdownRendererInterblockSeparator
{}\begin{lemma}[\formalized{rocq-ai:Interpretation_of_R/relational/DirectRSeparation.v\#L193}]\label{sep}For each \markdownRendererInlineMath{m > n}, there is a proof of\markdownRendererSoftLineBreak
{}\markdownRendererInlineMath{O(\overline{m}, \overline{n}) → ⊥}.\end{lemma}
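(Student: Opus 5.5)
The statement reduces to the previous lemma via the preorder laws of Lemma~\ref{r-common}. Given external $m > n$, write $m = n + 1 + k$ with $k \ge 0$. We have closed proofs of $O(\overline{n+1}, \overline{m})$ (since $n+1 \le m$, by the remark after Lemma~\ref{r-common}, built from reflexivity and finitely many steps $G(d) \to O(d, \suc d)$ together with transitivity). Given a hypothetical $H : O(\overline{m}, \overline{n})$, transitivity of $O$ applied to $O(\overline{n+1}, \overline{m})$ and $H$ yields a term of type $O(\overline{n+1}, \overline{n})$. Feeding it to the proof of $O(\overline{n+1}, \overline{n}) \to ⊥$ from the previous lemma gives $⊥$. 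The required proof is therefore the abstraction $λ (H : O(\overline m, \overline n)).\, \mathsf{sep}_n\,(\mathsf{trans}\,\mathsf{up}_{n+1,m}\,H)$.

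The concrete steps are as follows. First, build $\mathsf{up}_{a,b} : O(\overline a, \overline b)$ for $a \le b$ by induction on $b - a$ at the meta level. The base case uses reflexivity. The step composes with $G(\overline{b}) \to O(\overline b, \overline{b+1})$, where $G(\overline b)$ comes from iterating $G(\z)$ and $G(d)\to G(\suc d)$. Note that $\overline{b+1}$ and $\suc\,\overline b$ are only $β$-convertible, which is harmless by \textsc{Conv} since $O$ is a well-typed family of rank $6$. Second, instantiate the transitivity term of Lemma~\ref{r-common} at $d = \overline{n+1}$, $e = \overline m$, $f = \overline n$. Third, apply the previous lemma.

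I expect no real obstacle. The only points needing care are type-level: checking that the numerals produced by iterated $\suc$ agree with the Church literals up to $=_β$, so that conversion applies, and that all types involved live in a single rank so the transitivity combinator typechecks uniformly. Both follow from the typing of $O$ at $\rk{6}$ and from subject reduction.
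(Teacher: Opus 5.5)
Your proposal is correct and follows the paper's own argument. From $n+1 \le m$ you get a closed proof of $O(\overline{n+1}, \overline{m})$, compose it with the hypothesis $O(\overline{m}, \overline{n})$ by transitivity of $O$, and refute the resulting $O(\overline{n+1}, \overline{n})$ with the preceding lemma; your extra bookkeeping (the meta-level induction for $\mathsf{up}_{a,b}$ and conversion between the successor of $\overline{b}$ and $\overline{b+1}$) only makes explicit what the paper leaves to the remark after Lemma~\ref{r-common}.
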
\markdownRendererInterblockSeparator
{}\begin{proof}Suppose \markdownRendererInlineMath{O(\overline{m}, \overline{n})} is\markdownRendererSoftLineBreak
{}inhabited; since \markdownRendererInlineMath{m > n},\markdownRendererSoftLineBreak
{}\markdownRendererInlineMath{O(\overline{n + 1}, \overline{m})} is also\markdownRendererSoftLineBreak
{}inhabited.\markdownRendererSoftLineBreak
{}By transitivity of \markdownRendererInlineMath{O} it means that\markdownRendererSoftLineBreak
{}\markdownRendererInlineMath{O(\overline{n + 1}, \overline{n})} which we know\markdownRendererSoftLineBreak
{}is absurd.\end{proof}\markdownRendererInterblockSeparator
{}Thus, \markdownRendererInlineMath{\widetilde R3} also holds: if \markdownRendererInlineMath{m ≠ n},\markdownRendererSoftLineBreak
{}then either \markdownRendererInlineMath{m > n} or \markdownRendererInlineMath{m < n} (these are external\markdownRendererSoftLineBreak
{}natural numbers).\markdownRendererSoftLineBreak
{}Assuming \markdownRendererInlineMath{E(\overline{m}, \overline{n})},\markdownRendererSoftLineBreak
{}we obtain a paradoxical \markdownRendererInlineMath{O} and derive a\markdownRendererSoftLineBreak
{}contradiction by Lemma \markdownRendererLink{\markdownRendererHash{}sep}{\markdownRendererHash{}sep}{#sep}{}.\markdownRendererInterblockSeparator
{}
\markdownRendererSectionEnd \markdownRendererSectionBegin
\markdownRendererHeaderAttributeContextBegin
\markdownRendererAttributeIdentifier{bounded-cases}\markdownRendererHeadingTwo{Bounded cases}\markdownRendererHeaderAttributeContextEnd{}\markdownRendererInterblockSeparator
{}Here, we prove a crucial lemma from which\markdownRendererSoftLineBreak
{}remaining axioms follow.\markdownRendererSoftLineBreak
{}Set\begin{mathpar}
  \Case_n(d) ≜ ⋁_c
  \bigl(E(d, \overline{0}), …, E(d, \overline{n}),
  O(\overline{n + 1}, d)\bigr) : \rk{8}.
\end{mathpar}\markdownRendererInterblockSeparator
{}\begin{lemma}[\formalized{rocq-ai:Interpretation_of_R/relational/DirectRUnary.v\#L808}]\label{case}\markdownRendererInlineMath{Π (d : D).\;\Good(d) → \Case_n(d)} is inhabited\markdownRendererSoftLineBreak
{}for external \markdownRendererInlineMath{n}.\end{lemma}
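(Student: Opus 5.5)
The plan is to use the induction principle packaged in $\Good(d)$: given $g : \Good(d)$, apply $g$ to the motive $Q \coloneqq \lambda d.\,\Case_n(d)$ and supply a base case $\Case_n(\z)$ and a step $\Pi(u : D).\,\Case_n(u) \to \Case_n(\suc u)$. The motive has to land in $\rk{10}$, whereas $\Case_n(d) : \rk{8}$. So I would first wrap it with the lifting $L_{10}$ from the preliminaries, and unwrap afterwards by applying to $(\lambda\,T\,x.x)$. Since $n$ is external, everything is built by recursion on $n$ at the meta level.

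There is a problem with inducting on $\Case_n$ for fixed $n$. The step for the last disjunct, $O(\overline{n+1}, u) \to O(\overline{n+1}, \suc u)$, needs $G(u)$, because Lemma~\ref{r-common} gives $O(u,\suc u)$ only from $G(u)$. I would therefore strengthen the motive to $Q(d) \coloneqq L_{10}\bigl(G(d) \ac \Case_n(d)\bigr)$, or simply carry $G$ along as a conjunct. Its base and step components come from the first item of Lemma~\ref{r-common}. The base case is $E(\z,\overline0)$, which is reflexivity, since $\z$ is $\overline0$ definitionally.

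The step proceeds by classical case elimination on the encoded disjunction. This is legitimate because $\Case_n(\suc u)$ is a $\bigvee_c$ and hence stable, as noted in the preliminaries.
\begin{itemize}
\item If $E(u,\overline i)$ with $i<n$, then $E(\suc u, \overline{i+1})$, because every $F : D \to D$ respects $E$ (take $F = \suc$) and $\suc\,\overline i =_\beta \overline{i+1}$. Inject this into the $(i+1)$-th disjunct.
\item If $E(u,\overline n)$, then $E(\suc u,\overline{n+1})$. Lemma~\ref{r-common} turns this into $O(\overline{n+1},\suc u)$ via $E \to O$ and symmetry of $E$, which gives the last disjunct.
\item If $O(\overline{n+1},u)$, combine it with $G(u) \to O(u,\suc u)$ by transitivity of $O$.
\end{itemize}

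The main obstacle I expect is rank bookkeeping rather than logic. I have to check that the strengthened motive fits into $\rk{10}$ after lifting. I also have to check that the case eliminations into $\bot : \rk{1}$ typecheck: each disjunct is used as $P_i \to \bot$, and elimination into a stable target requires building $\nc\nc$-wrappers of the correct rank. If $G(d) \ac \Case_n(d)$ lies too high, I would instead do two separate inductions from $\Good(d)$, one yielding $G(d)$ and one with motive $\Case_n$. In the second, the step would take $G(u)$ by first applying $\Good$ to an auxiliary motive. That route only works if $\Good(u)$ is itself available inside the step, which it is not directly. So the conjunctive motive remains my first choice, and I would verify that its rank stays at most $10$ using the rank annotations given for $G$ and $E$.
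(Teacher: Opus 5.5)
Your proposal is correct and is essentially the paper's proof. You carry $G$ alongside $\Case_n$ in the motive given to $\Good(d)$, take the $G$-component from Lemma~\ref{r-common}, handle the step by the same three-way split (successor congruence for $i<n$, $E \Rightarrow O$ for $i=n$, transitivity with $O(u,\suc u)$ for the large branch), and finally project out $\Case_n(d)$. The only difference is the pairing: the paper uses the positive invariant $\Inv_n(d) = \Pi(Z:\rk{8}).\,(G(d)\to\Case_n(d)\to Z)\to Z$, whereas your negative pair $G(d)\ac\Case_n(d)$ works equally well. No extra $\nc\nc$-wrappers are needed, because every elimination target (the next invariant and the final $\Case_n(d)$) ends in $\bot$. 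The pair already lies in exactly $\rk{10}$, so the $L_{10}$ lift is harmless but superfluous.
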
\markdownRendererInterblockSeparator
{}\begin{proof}We are going to use the following positive\markdownRendererSoftLineBreak
{}invariant for \markdownRendererInlineMath{Q}:\begin{mathpar}
  \Inv_n(d) ≜ Π (Z : \rk{8}).\;(G(d) → \Case_n(d) → Z) → Z \quad : \rk{10}.
\end{mathpar}\markdownRendererInterblockSeparator
{}At zero, supply \markdownRendererInlineMath{G(\z)} and the\markdownRendererSoftLineBreak
{}\markdownRendererInlineMath{E(\z, \overline{0})} branch.\markdownRendererSoftLineBreak
{}At successor, open the target continuation and\markdownRendererSoftLineBreak
{}eliminate the child invariant at that same \markdownRendererInlineMath{Z},\markdownRendererSoftLineBreak
{}obtaining \markdownRendererInlineMath{G(d)} and \markdownRendererInlineMath{\Case_n(d)}.\markdownRendererSoftLineBreak
{}Obtain \markdownRendererInlineMath{G(\suc d)} independently.\markdownRendererSoftLineBreak
{}To construct the new negative case proof,\markdownRendererSoftLineBreak
{}all branch continuations return \markdownRendererInlineMath{⊥}.\markdownRendererSoftLineBreak
{}An \markdownRendererInlineMath{i < n} branch becomes \markdownRendererInlineMath{i + 1} by successor\markdownRendererSoftLineBreak
{}congruence.\markdownRendererSoftLineBreak
{}The \markdownRendererInlineMath{i = n} branch gives\markdownRendererSoftLineBreak
{}\markdownRendererInlineMath{E(\suc d, \overline{n + 1})},\markdownRendererSoftLineBreak
{}hence \markdownRendererInlineMath{O(\overline{n + 1}, \suc d)}.\markdownRendererSoftLineBreak
{}The large branch is composed with \markdownRendererInlineMath{O(d, \suc d)}.\markdownRendererSoftLineBreak
{}This proves successor closure.\markdownRendererInterblockSeparator
{}Instantiate \markdownRendererInlineMath{\Good(d)} with \markdownRendererInlineMath{\Inv_n}.\markdownRendererSoftLineBreak
{}Project its case component by choosing\markdownRendererSoftLineBreak
{}\markdownRendererInlineMath{Z = \Case_n(d)} and continuation \markdownRendererInlineMath{λ g\,c.c}.\markdownRendererSoftLineBreak
{}No negative disjunction is eliminated into\markdownRendererSoftLineBreak
{}positive \markdownRendererInlineMath{G}-evidence.\end{proof}\markdownRendererInterblockSeparator
{}
\markdownRendererSectionEnd \markdownRendererSectionBegin
\markdownRendererHeadingTwo{The relational interpretation}\markdownRendererInterblockSeparator
{}\begin{theorem}[\formalized{rocq-ai:Interpretation_of_R/relational/DirectRTheorem4.v\#L57}]\label{main}\markdownRendererNote{The linked theorem gives original-PTS\markdownRendererSoftLineBreak
{}inhabitants for all axiom instances and all source\markdownRendererSoftLineBreak
{}derivations.\markdownRendererSoftLineBreak
{}The generic constructive witness search turns each\markdownRendererSoftLineBreak
{}such existence proof into a proof term together\markdownRendererSoftLineBreak
{}with its typing derivation.}\markdownRendererSoftLineBreak
{}The carrier \markdownRendererInlineMath{D}, admitted domain \markdownRendererInlineMath{\Good},\markdownRendererSoftLineBreak
{}closed operations \markdownRendererInlineMath{\z, \suc},\markdownRendererSoftLineBreak
{}and relations \markdownRendererInlineMath{E, O, A^∗, M^∗} interpret the\markdownRendererSoftLineBreak
{}classical relational theory \markdownRendererInlineMath{\Rt} in \PTSrk{}.\markdownRendererSoftLineBreak
{}Every translated axiom has an effectively\markdownRendererSoftLineBreak
{}constructible closed proof term.\markdownRendererSoftLineBreak
{}The interpretation is closed under classical\markdownRendererSoftLineBreak
{}first-order derivations: every \markdownRendererInlineMath{\Rt}-derivation of\markdownRendererSoftLineBreak
{}a sentence effectively yields a closed \PTSrk{}\markdownRendererSoftLineBreak
{}proof term of its translation.\markdownRendererSoftLineBreak
{}True ground arithmetic atoms have proofs,\markdownRendererSoftLineBreak
{}and false ground atoms have internal refutations.\end{theorem}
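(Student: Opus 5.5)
The proof splits into three parts: the translated axioms, closure under derivations, and the behaviour of ground atoms. Each part assembles ingredients already established.

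\emph{Axioms.} For $\widetilde R1$ and $\widetilde R2$, $\Add\,\overline m\,\overline n$ and $\overline{m+n}$ are $β$-convertible. Hence $A^∗(\overline m,\overline n,x)$ and $E(\overline{m+n},x)$ are convertible types, and the biconditional is obtained from identity maps packed into $\iffc$. $M^∗$ is handled the same way. Universal closures are supplied by abstracting over $x : D$ and its $\Good$-certificate.

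For $\widetilde R3$, Lemma~\ref{sep} together with $E(d,e)\to O(d,e)$ from Lemma~\ref{r-common} gives, for $m\neq n$, a term of type $E(\overline m,\overline n)\to⊥$. When $m<n$ we first use symmetry of $E$.

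For $\widetilde R4$, define $x\le^∗ y \triangleq O(x,y)$. The backward direction eliminates the classical disjunction into the stable target $O(x,\overline n)$, using $E$-compatibility of $O$ and the closed proofs of $O(\overline i,\overline n)$ for $i\le n$. The forward direction applies Lemma~\ref{case} to the $\Good$-certificate of $x$ to get $\Case_n(x)$. In its last branch, $O(\overline{n+1},x)$ composed with $O(x,\overline n)$ by transitivity contradicts Lemma~\ref{sep}; the remaining branches are exactly the required disjuncts.

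For $\widetilde R5$, we again invoke $\Case_n(x)$. The $E(x,\overline i)$ branches give $x\le\overline n$ via the closed proofs of $O(\overline i,\overline n)$. The last branch gives $\overline n\le x$ by composing $O(\overline n,\overline{n+1})$ with $O(\overline{n+1},x)$.

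All these proof terms are uniform in the external indices, so they are effectively constructible.

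\emph{Closure under derivations.} We proceed by induction on a classical natural-deduction (or sequent) derivation in $\Rt$, translating each rule through the fixed negative encoding. Every translated formula is stable, since $⊥$, $\nc$, $\ac$, $⋁_c$ and the quantifier translations are negative or are $Π$-types with stable codomain. Therefore reductio, excluded middle (the term $λ k\,l.\,l\,k$), and disjunction and existential elimination into translated formulas are all available.

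Variables are interpreted as $x : D$ accompanied by hypotheses $\Good(x)$. The closed terms $\z$ and $\suc$ preserve $\Good$ through $\mathsf{good}_0$ and $\mathsf{good}_s$, which gives domain closure for terms of the language.

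The equality axioms come from reflexivity, symmetry and transitivity of $E$, together with congruence of $E$, $O$, $A^∗$ and $M^∗$ in each argument (Lemma~\ref{r-common} and the respect properties). Substitution into arbitrary formulas then follows by induction on the formula, which is possible because every connective is built from atoms by $Π$ and arrows.

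\emph{Rank bookkeeping.} I expect this to be the main obstacle. The sorts are not closed under $Π$, so the translation of a formula with many nested connectives climbs in rank. Each translated formula still lives in some $\rk{k}$, because the system is total. The delicate point is that a fixed proposition type such as $\Case_n$ must fit below the motive sorts fixed in $\Good$ ($\rk{10}$) and in $\Inv_n$ ($\rk{8}$). I would check that Lemma~\ref{case} is used only at those fixed ranks, and that arbitrary formulas are only ever introduced and eliminated, never quantified over. Where a fixed rank must be met, I would insert the lifting $L_k$.

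\emph{Ground atoms.} True ground atoms are proved by convertibility and reflexivity of $E$, or by the closed proofs of $O(\overline m,\overline n)$. False ground atoms are refuted by Lemma~\ref{sep}, or by the $\widetilde R3$ proofs after rewriting $\Add$ and $\Mul$ by conversion.
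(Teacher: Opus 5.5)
Your proposal is correct and follows the paper's proof. It uses the same ingredients throughout:
\begin{itemize}
\item $\widetilde R1$--$\widetilde R2$ by $\beta$-conversion of $\Add$/$\Mul$ on numerals;
\item $\widetilde R3$ via $E\Rightarrow O$ and Lemma~\ref{sep};
\item $\widetilde R4$--$\widetilde R5$ by eliminating $\Case_n(x)$ from Lemma~\ref{case}, using transitivity and ground comparisons;
\item derivations via the stable negative translation, together with domain closure and congruence;
\item ground atoms by conversion, reflexivity, ground order and Lemma~\ref{sep}.
\end{itemize}

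The rank bookkeeping you flag is not actually an obstacle here, and no lifting $L_k$ is needed in this proof. Every case elimination goes through the fixed answer type $\bot : \rk{1}$, and $\Good$-induction is used only inside the already established Lemmas~\ref{r-common} and~\ref{case}.
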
\markdownRendererInterblockSeparator
{}\begin{proof}Domain closure, atomic stability,\markdownRendererSoftLineBreak
{}and congruence have been established;\markdownRendererSoftLineBreak
{}hence source deductions translate to PTS proofs.\markdownRendererSoftLineBreak
{}We know that \markdownRendererInlineMath{\widetilde R1}-\markdownRendererInlineMath{\widetilde R3} hold;\markdownRendererSoftLineBreak
{}let us check the remaining axioms.\markdownRendererParagraphSeparator
{}For bounded order (\markdownRendererInlineMath{\widetilde R4}),\markdownRendererSoftLineBreak
{}apply Lemma \markdownRendererLink{\markdownRendererHash{}case}{\markdownRendererHash{}case}{#case}{}.\markdownRendererSoftLineBreak
{}Under \markdownRendererInlineMath{O(d, \overline{n})}, a case involving\markdownRendererSoftLineBreak
{}\markdownRendererInlineMath{O(\overline{n + 1}, d)} contradicts\markdownRendererSoftLineBreak
{}\markdownRendererInlineMath{\nc O(\overline{n + 1}, \overline{n})} by\markdownRendererSoftLineBreak
{}transitivity.\markdownRendererSoftLineBreak
{}Conversely, a \markdownRendererInlineMath{E(d, \overline{i})} branch with\markdownRendererSoftLineBreak
{}\markdownRendererInlineMath{i ≤ n} yields \markdownRendererInlineMath{O(d, \overline{n})} via \markdownRendererInlineMath{E ⇒ O}\markdownRendererSoftLineBreak
{}and ground comparison.\markdownRendererParagraphSeparator
{}For comparability (\markdownRendererInlineMath{\widetilde R5}),\markdownRendererSoftLineBreak
{}the finite branches of Lemma \markdownRendererLink{\markdownRendererHash{}case}{\markdownRendererHash{}case}{#case}{} give\markdownRendererSoftLineBreak
{}\markdownRendererInlineMath{O(d, \overline{n})} and the large branch gives\markdownRendererSoftLineBreak
{}\markdownRendererInlineMath{O(\overline{n}, d)}.\markdownRendererParagraphSeparator
{}The ground proof/refutation claims follow from the\markdownRendererSoftLineBreak
{}arithmetic beta equations, reflexivity,\markdownRendererSoftLineBreak
{}ground order, and Lemma \markdownRendererLink{\markdownRendererHash{}sep}{\markdownRendererHash{}sep}{#sep}{}.\end{proof}\markdownRendererInterblockSeparator
{}\begin{remark}Constructions from Sections \markdownRendererLink{\markdownRendererHash{}finite-separation}{\markdownRendererHash{}finite-separation}{#finite-separation}{}\markdownRendererSoftLineBreak
{}and \markdownRendererLink{\markdownRendererHash{}bounded-cases}{\markdownRendererHash{}bounded-cases}{#bounded-cases}{} generalise to arbitrary\markdownRendererSoftLineBreak
{}Church-encoded inductive types providing\markdownRendererSoftLineBreak
{}constructor disjointness and injectivity,\markdownRendererSoftLineBreak
{}as well as finite case analysis for subtrees of\markdownRendererSoftLineBreak
{}fixed constructor trees.\markdownRendererSoftLineBreak
{}These results can be seen as internal inversion\markdownRendererSoftLineBreak
{}lemmas.\end{remark}\markdownRendererInterblockSeparator
{}While this interpretation is full, it, however,\markdownRendererSoftLineBreak
{}is not faithful on arbitrary first-order\markdownRendererSoftLineBreak
{}sentences.\markdownRendererInterblockSeparator
{}\begin{remark}[Failure of faithfulness \formalized{rocq-ai:Interpretation_of_R/relational/DirectRTheorem4.v\#L47}]The interpretation of \markdownRendererInlineMath{\Rt} is not faithful.\end{remark}\markdownRendererInterblockSeparator
{}\begin{proof}The translation of \markdownRendererInlineMath{ρ ≜ ∀ x\,(x ≤ x)} is provable\markdownRendererSoftLineBreak
{}in \PTSrk{}, but \markdownRendererInlineMath{\Rt ⊬ ρ}.\markdownRendererParagraphSeparator
{}We already know that \markdownRendererInlineMath{O} is reflexive (Lemma \markdownRendererLink{\markdownRendererHash{}r-common}{\markdownRendererHash{}r-common}{#r-common}{}).\markdownRendererSoftLineBreak
{}But why is reflexivity not derivable in\markdownRendererSoftLineBreak
{}\markdownRendererInlineMath{\widetilde R}?\markdownRendererSoftLineBreak
{}It follows from model-theoretic considerations:\markdownRendererSoftLineBreak
{}there is a set-theoretic countermodel.\markdownRendererInterblockSeparator
{}Take \markdownRendererInlineMath{N^+ = ℕ ⊔ \{ω\}}, where \markdownRendererInlineMath{ω} is one new\markdownRendererSoftLineBreak
{}element.\markdownRendererSoftLineBreak
{}Interpret \markdownRendererInlineMath{0} and successor as usual on \markdownRendererInlineMath{ℕ},\markdownRendererSoftLineBreak
{}put \markdownRendererInlineMath{Sω = ω}, and use actual equality.\markdownRendererSoftLineBreak
{}For \markdownRendererInlineMath{x, y ∈ ℕ}, let \markdownRendererInlineMath{A(x, y, z)} hold exactly when\markdownRendererSoftLineBreak
{}\markdownRendererInlineMath{z = x + y}, and let \markdownRendererInlineMath{M(x, y, z)} hold exactly\markdownRendererSoftLineBreak
{}when \markdownRendererInlineMath{z = xy}.\markdownRendererSoftLineBreak
{}When at least one of \markdownRendererInlineMath{x, y} is \markdownRendererInlineMath{ω},\markdownRendererSoftLineBreak
{}let each predicate hold exactly when \markdownRendererInlineMath{z = 0}.\markdownRendererSoftLineBreak
{}Define\begin{mathpar}
  x ≤^+ y \quad ⟺ \quad
  \bigl(x, y ∈ ℕ\ \text{and}
  \ x ≤_{ℕ} y\bigr)
  \ \text{or}\ \bigl(x ∈ ℕ\ \text{and}
  \ y = ω \bigr).
\end{mathpar}\markdownRendererParagraphSeparator
{}The ground graph schemes hold for every third\markdownRendererSoftLineBreak
{}argument, and distinct numerals are unequal.\markdownRendererSoftLineBreak
{}For every standard \markdownRendererInlineMath{n}, the downset of \markdownRendererInlineMath{n} is\markdownRendererSoftLineBreak
{}exactly \markdownRendererInlineMath{\{0, …, n\}}, proving both directions of\markdownRendererSoftLineBreak
{}(\markdownRendererLink{\markdownRendererHash{}rtilde:bound}{\markdownRendererHash{}rtilde:bound}{#rtilde:bound}{}).\markdownRendererSoftLineBreak
{}Every standard number is comparable with \markdownRendererInlineMath{n},\markdownRendererSoftLineBreak
{}and \markdownRendererInlineMath{n ≤^+ ω}, proving (\markdownRendererLink{\markdownRendererHash{}rtilde:compare}{\markdownRendererHash{}rtilde:compare}{#rtilde:compare}{}).\markdownRendererSoftLineBreak
{}Hence, this is a model of \markdownRendererInlineMath{\Rt}.\markdownRendererParagraphSeparator
{}However, \markdownRendererInlineMath{ω \not ≤^+ ω}, so \markdownRendererInlineMath{ρ} is false.\end{proof}\markdownRendererInterblockSeparator
{}Actually, it is possible to prove inside \PTSrk{}\markdownRendererSoftLineBreak
{}that \markdownRendererInlineMath{\Good} Church numerals form a \markdownRendererEmphasis{strictly ordered\markdownRendererSoftLineBreak
{}commutative unital semiring}.\markdownRendererSoftLineBreak
{}A generated proof is available in\markdownRendererSoftLineBreak
{}supplementary materials.\markdownRendererParagraphSeparator
{}
\markdownRendererSectionEnd \markdownRendererSectionBegin
\markdownRendererHeaderAttributeContextBegin
\markdownRendererAttributeIdentifier{sec:nonrelational}\markdownRendererHeadingTwo{Relation to the original presentation of \texorpdfstring{$\R$}{R}}\markdownRendererHeaderAttributeContextEnd{}\markdownRendererInterblockSeparator
{}The original Tarski–Mostowski–Robinson\markdownRendererSoftLineBreak
{}arithmetic \markdownRendererInlineMath{\R} \markdownRendererCite{1}+{}{}{tarski1953undecidable} uses\markdownRendererSoftLineBreak
{}\markdownRendererInlineMath{\{0, S,+, ⋅, ≤ \}} and the following schemes,\markdownRendererSoftLineBreak
{}again with universal closure:\begin{align}
  \bar m + \bar n & = \overline{m + n}, & & \tag{R1}\\
  \bar m ⋅ \bar n & = \overline{mn}, & & \tag{R2}\\
  \bar m & ≠ \bar n \quad (m ≠ n), & & \tag{R3}\\
  x ≤ \bar n & ⟶
      ⋁_{i = 0}^{n}x = \bar i,
      & & \tag{R4}\label{r:bound}\\
  x ≤ \bar n & ∨ \bar n ≤ x.
      & & \tag{R5}\label{r:compare}
\end{align}\markdownRendererInterblockSeparator
{}Requiring \markdownRendererInlineMath{A, M} in \markdownRendererInlineMath{\Rt} to be total,\markdownRendererSoftLineBreak
{}single-valued graphs makes them interchangeable\markdownRendererSoftLineBreak
{}with the function symbols \markdownRendererInlineMath{+, ⋅}.\markdownRendererSoftLineBreak
{}This functional strengthening is definitionally\markdownRendererSoftLineBreak
{}equivalent to \markdownRendererInlineMath{\R}; even without that\markdownRendererSoftLineBreak
{}strengthening, the two theories have the same\markdownRendererSoftLineBreak
{}interpretability degree.\markdownRendererInterblockSeparator
{}\begin{proposition}The classical theories \markdownRendererInlineMath{\Rt} and \markdownRendererInlineMath{\R} are mutually\markdownRendererSoftLineBreak
{}interpretable \markdownRendererNote{Mutual interpretability does not\markdownRendererSoftLineBreak
{}imply bi-interpretability.\markdownRendererSoftLineBreak
{}In particular, the totalized graphs need not\markdownRendererSoftLineBreak
{}coincide with the original relations \markdownRendererInlineMath{A, M} on\markdownRendererSoftLineBreak
{}arbitrary arguments.}.\end{proposition}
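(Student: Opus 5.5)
We construct two relative interpretations, one in each direction, and check that every axiom is sent to a theorem of the target theory. In both directions the domain is the full universe, $0$ and $S$ are translated identically, and equality is actual equality.

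\emph{Interpreting $\Rt$ in $\R$.} Translate the ternary predicates as graphs of the function symbols: $A(x,y,z)$ becomes $x+y=z$, $M(x,y,z)$ becomes $x\cdot y=z$, and $\le$ is translated as itself.
\begin{itemize}
\item Scheme $\widetilde R1$ becomes $\overline m+\overline n=x \leftrightarrow \overline{m+n}=x$. This follows from R1 by substitutivity of equality; $\widetilde R2$ follows from R2 in the same way.
\item Scheme $\widetilde R3$ is literally R3, and $\widetilde R5$ is literally R5.
\item For $\widetilde R4$, the left-to-right direction is R4. For the converse we need $\overline i\le\overline n$ for each $i\le n$.
\end{itemize}
Here is the first snag: $\R$ in the presentation above contains no scheme giving $\overline i\le\overline n$. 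I would handle this by redefining order rather than translating it literally. Put
\[
x\le^\sharp y \;:\equiv\; x\le y\wedge\bigvee_{i\le n}x=\overline i\;\;\text{when }y=\overline n
\]
— but this is not uniform in $y$, so it is not a legitimate definition. Instead I would try $x\le^\sharp y :\equiv x\le y\vee x=y$. That makes $\overline n\le^\sharp\overline n$ trivial, but it still leaves $\overline i\le^\sharp\overline n$ for $i<n$ undecided. Using R5 with $x:=\overline i$ we get $\overline i\le\overline n\vee\overline n\le\overline i$. In the second disjunct, R4 (at bound $i$) forces $\overline n=\overline j$ for some $j\le i<n$, which R3 refutes. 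Hence $\overline i\le\overline n$ is already provable in $\R$, the converse of $\widetilde R4$ holds outright, and the literal translation of $\le$ suffices after all.

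\emph{Interpreting $\R$ in $\Rt$.} This is the main obstacle, because the graphs $A$ and $M$ need not be total or single-valued. Following the note, I would totalize them by declaring $x+y$ to be the unique $z$ with $A(x,y,z)$ when such a unique $z$ exists, and $0$ otherwise. Formally, $\R$ is translated into the language with a definable function
\[
F_A(x,y)=z \;:\equiv\; \bigl(A(x,y,z)\wedge\forall z'(A(x,y,z')\to z'=z)\bigr)\vee\bigl(\neg\exists! z'\,A(x,y,z')\wedge z=0\bigr),
\]
and $F_M$ is defined likewise. Totality and functionality of these definitions are pure logic, so they give valid interpretations of the function symbols.

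We then check the axioms. For R1, $\widetilde R1$ gives, for standard $\overline m,\overline n$, that $A(\overline m,\overline n,z)$ holds exactly for $z=\overline{m+n}$. Uniqueness therefore holds, so $F_A(\overline m,\overline n)=\overline{m+n}$. R2 is handled the same way. R3 and R5 transfer verbatim, and R4 is one direction of $\widetilde R4$.

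Thus totalization resolves the obstacle, because the axioms only constrain $A$ and $M$ on standard arguments, where $\widetilde R1$ and $\widetilde R2$ already force graph-functionality. This composes to mutual interpretability. As the note warns, these two translations are not inverse up to provable isomorphism, since $F_A$ may disagree with $A$ on nonstandard inputs.
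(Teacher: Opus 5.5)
Your proposal is correct and takes essentially the same approach as the paper. The paper also interprets the ternary predicates as graphs of $+$ and $\cdot$, derives the true ground comparisons $\bar i\le\bar n$ from R5, R4 and R3, and in the other direction uses the same totalized definable graphs $F_A, F_M$ (unique output, else $0$). The $\le^\sharp$ detour is unnecessary, and you should state the case $i=n$ explicitly: it is the instance $x:=\bar n$ of R5, as the paper notes.
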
\markdownRendererInterblockSeparator
{}\begin{proof}In \markdownRendererInlineMath{\R}, interpret \markdownRendererInlineMath{A(x, y, z)} by \markdownRendererInlineMath{x + y = z} and\markdownRendererSoftLineBreak
{}\markdownRendererInlineMath{M(x, y, z)} by \markdownRendererInlineMath{xy = z}.\markdownRendererSoftLineBreak
{}The ground arithmetic equations give the two graph\markdownRendererSoftLineBreak
{}schemes.\markdownRendererSoftLineBreak
{}For \markdownRendererInlineMath{i < j}, an assumption \markdownRendererInlineMath{\bar j ≤ \bar i}\markdownRendererSoftLineBreak
{}contradicts (\markdownRendererLink{\markdownRendererHash{}r:bound}{\markdownRendererHash{}r:bound}{#r:bound}{}) and numeral separation;\markdownRendererSoftLineBreak
{}(\markdownRendererLink{\markdownRendererHash{}r:compare}{\markdownRendererHash{}r:compare}{#r:compare}{}) therefore yields \markdownRendererInlineMath{\bar i ≤ \bar j}.\markdownRendererSoftLineBreak
{}The case \markdownRendererInlineMath{i = j} follows from (\markdownRendererLink{\markdownRendererHash{}r:compare}{\markdownRendererHash{}r:compare}{#r:compare}{})\markdownRendererSoftLineBreak
{}directly.\markdownRendererSoftLineBreak
{}Thus, \markdownRendererInlineMath{\R} proves all true ground comparisons,\markdownRendererSoftLineBreak
{}and equality substitution supplies the reverse\markdownRendererSoftLineBreak
{}implication in (\markdownRendererLink{\markdownRendererHash{}rtilde:bound}{\markdownRendererHash{}rtilde:bound}{#rtilde:bound}{}).\markdownRendererSoftLineBreak
{}The remaining relational schemes are immediate.\markdownRendererInterblockSeparator
{}For the other direction define, in \markdownRendererInlineMath{\Rt},\begin{align*}
  U_A(x, y, z) & ≜ A(x, y, z) ∧ ∀ w\bigl(A(x, y, w) → w = z\bigr), \\
          F_A(x, y, z) & ≜ U_A(x, y, z) ∨ \bigl(z = 0 ∧ ¬ ∃ v\,U_A(x, y, v)\bigr).
\end{align*}\markdownRendererParagraphSeparator
{}For fixed \markdownRendererInlineMath{x, y}, either a unique \markdownRendererInlineMath{A}-output\markdownRendererSoftLineBreak
{}exists or it does not.\markdownRendererSoftLineBreak
{}In the first case \markdownRendererInlineMath{F_A} selects that output;\markdownRendererSoftLineBreak
{}in the second it selects zero.\markdownRendererSoftLineBreak
{}Hence, \markdownRendererInlineMath{F_A} is provably total and single-valued.\markdownRendererSoftLineBreak
{}At standard inputs, (\markdownRendererLink{\markdownRendererHash{}rtilde:add}{\markdownRendererHash{}rtilde:add}{#rtilde:add}{}) forces its\markdownRendererSoftLineBreak
{}value to be \markdownRendererInlineMath{\overline{m + n}}.\markdownRendererSoftLineBreak
{}Define \markdownRendererInlineMath{F_M} identically with \markdownRendererInlineMath{M} in place of \markdownRendererInlineMath{A};\markdownRendererSoftLineBreak
{}its standard value is \markdownRendererInlineMath{\overline{mn}}.\markdownRendererSoftLineBreak
{}These graphs interpret \markdownRendererInlineMath{+, ⋅};\markdownRendererSoftLineBreak
{}keep \markdownRendererInlineMath{0, S, =, ≤} unchanged.\markdownRendererSoftLineBreak
{}All five schemes of \markdownRendererInlineMath{\R} follow.\markdownRendererParagraphSeparator
{}Composing this graph interpretation with Theorem \markdownRendererLink{\markdownRendererHash{}main}{\markdownRendererHash{}main}{#main}{}\markdownRendererSoftLineBreak
{}gives the claimed interpretation in \PTSrk{}.\markdownRendererSoftLineBreak
{}The composition uses the encoded classical\markdownRendererSoftLineBreak
{}quantifiers to express values and uniqueness;\markdownRendererSoftLineBreak
{}it does not assert \markdownRendererInlineMath{\Good}-closure of the raw\markdownRendererSoftLineBreak
{}Church operators \markdownRendererInlineMath{\Add, \Mul}.\end{proof}\markdownRendererInterblockSeparator
{}\begin{corollary}\PTSrk{} interprets \markdownRendererInlineMath{\R} by definable arithmetic\markdownRendererSoftLineBreak
{}graphs.\end{corollary}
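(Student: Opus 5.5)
The corollary follows by composing two interpretations: the graph interpretation of $\R$ in $\Rt$ from the preceding proposition, and the interpretation of $\Rt$ in \PTSrk{} from Theorem~\ref{main}. Interpretations compose, so the composite interprets $\R$ in \PTSrk{}. The only real work is to spell out what each symbol of $\R$ is sent to, and to check that derivability survives both stages.

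First, I fix the composite translation. The symbols $0, S, =, \le$ go to $\z, \suc, E, O$ exactly as in Theorem~\ref{main}. The function symbols $+$ and $\cdot$ are replaced by the definable graphs $F_A, F_M$ of the previous proof. These graphs are built from $A, M$ and $=$ using $\wedge, \vee, \neg, \forall, \exists$, so under the $({-})^\ast$ translation they become closed \PTSrk{} types over $D$ and $\Good$, using $A^\ast, M^\ast, E$ and the stable encodings $\ac$, $\oc$, $\nc$ and the negative quantifiers. Terms of $\R$ involving $+$ or $\cdot$ are first unnested into formulas of $\Rt$ with auxiliary existentially quantified variables, which is the standard elimination of function symbols in favour of their graphs. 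Second, I use that the previous proposition shows $\Rt$ proves totality and single-valuedness of $F_A, F_M$ and all five schemes (R1)--(R5) under this replacement. Third, I apply the closure clause of Theorem~\ref{main}: every $\Rt$-derivation of a sentence yields a closed \PTSrk{} proof term of its translation. Applying it to each translated axiom of $\R$, and to the totality and uniqueness statements for $F_A, F_M$, gives closed proof terms. Closure under classical derivations then carries over to $\R$-derivations.

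I expect the main obstacle to be the unnesting step. The translated existential witnesses for $F_A$ must range over $\Good$ elements, because $(\exists x\,\varphi)^\ast$ quantifies only over $\Good$. We must not claim that $\Add\,d\,e$ is itself $\Good$; the proof explicitly avoids asserting $\Good$-closure of the raw $\Add$ and $\Mul$. My plan is to never refer to $\Add\,d\,e$ as an object. All references to sums go through the $\Rt$-provable statement $\forall x\,\forall y\,\exists! z\,F_A(x,y,z)$, whose translation is inhabited by Theorem~\ref{main}. Since this statement lives in the classical fragment, the resulting existential witnesses are automatically $\Good$. Elimination of these existentials then happens only into stable targets, and every translated formula is stable, so the usual classical function-symbol elimination argument goes through.
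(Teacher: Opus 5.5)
Your proposal is correct and follows the paper's own route. The paper composes the graph interpretation of $\R$ in $\Rt$ (via $F_A, F_M$, with $0, S, =, \le$ kept unchanged) with the interpretation of $\Rt$ in \PTSrk{} from Theorem~\ref{main}, and it likewise notes that quantifiers range over $\Good$ and that $\Good$-closure of the raw $\Add, \Mul$ is never asserted.

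The unnesting step you flag as the main obstacle is already handled at the first-order level: $\R \vdash \varphi$ yields $\Rt \vdash \varphi^t$ purely syntactically, and the closure clause of Theorem~\ref{main} takes care of everything at the PTS level.
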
\markdownRendererInterblockSeparator
{}
\markdownRendererSectionEnd 
\markdownRendererSectionEnd \markdownRendererSectionBegin
\markdownRendererHeadingOne{Conclusion}\markdownRendererInterblockSeparator
{}To sum it all up, what started as a funny idea\markdownRendererSoftLineBreak
{}(what if there actually was a sound pure type\markdownRendererSoftLineBreak
{}system with \markdownRendererInlineMath{\mathsf{type} : \mathsf{type}}) led\markdownRendererSoftLineBreak
{}us to stumble on a truly unique dependently typed\markdownRendererSoftLineBreak
{}pure type system, \PTSrk{}.\markdownRendererParagraphSeparator
{}First of all, its normalization proof is\markdownRendererSoftLineBreak
{}suspiciously simple and reminds of classic proofs\markdownRendererSoftLineBreak
{}of normalization for STLC where,\markdownRendererSoftLineBreak
{}instead of an established technique by\markdownRendererSoftLineBreak
{}reducibility candidates, one patiently tracks\markdownRendererSoftLineBreak
{}complexities of arrow types and makes sure that\markdownRendererSoftLineBreak
{}they become simpler after sequences of reductions\markdownRendererSoftLineBreak
{}(Section \markdownRendererLink{\markdownRendererHash{}rk-sn}{\markdownRendererHash{}rk-sn}{#rk-sn}{}).\markdownRendererSoftLineBreak
{}To make sure that this approach works for our\markdownRendererSoftLineBreak
{}system, we formalized our proof in Rocq proof\markdownRendererSoftLineBreak
{}assistant (Section \markdownRendererLink{\markdownRendererHash{}formalization}{\markdownRendererHash{}formalization}{#formalization}{}) and in doing\markdownRendererSoftLineBreak
{}so prepared the groundwork for future verified\markdownRendererSoftLineBreak
{}interactions with \PTSrk{}.\markdownRendererParagraphSeparator
{}After noticing that normalization proof yields a\markdownRendererSoftLineBreak
{}primitive recursive normalization procedure,\markdownRendererSoftLineBreak
{}we found tighter bounds and found out that it has\markdownRendererSoftLineBreak
{}the same complexity as normalization of STLC,\markdownRendererSoftLineBreak
{}which was previously unheard of for a dependently\markdownRendererSoftLineBreak
{}typed system (Section \markdownRendererLink{\markdownRendererHash{}complexity}{\markdownRendererHash{}complexity}{#complexity}{}).\markdownRendererSoftLineBreak
{}In contrast with existing pure type systems,\markdownRendererSoftLineBreak
{}our system is able to represent elementary\markdownRendererSoftLineBreak
{}recursive functions (both in the sense that it is\markdownRendererSoftLineBreak
{}stronger than STLC and weaker than more classical\markdownRendererSoftLineBreak
{}dependently typed systems) and seems intimately\markdownRendererSoftLineBreak
{}connected to it (Section \markdownRendererLink{\markdownRendererHash{}computational}{\markdownRendererHash{}computational}{#computational}{}).\markdownRendererSoftLineBreak
{}To check if constructions inside \PTSrk{} even\markdownRendererSoftLineBreak
{}make sense, we constructed a set-theoretical model\markdownRendererSoftLineBreak
{}for it which assumes a Generalized Continuum\markdownRendererSoftLineBreak
{}Hypothesis to strictify the hierarchy of\markdownRendererSoftLineBreak
{}\markdownRendererInlineMath{ℵ}-s which act as interpretations of our function\markdownRendererSoftLineBreak
{}types (Section \markdownRendererLink{\markdownRendererHash{}rk-model}{\markdownRendererHash{}rk-model}{#rk-model}{}).\markdownRendererSoftLineBreak
{}Finally, we provide a lower bound on\markdownRendererSoftLineBreak
{}proof-theoretical strength of \PTSrk{} by\markdownRendererSoftLineBreak
{}providing a full interpretation of a relational\markdownRendererSoftLineBreak
{}version of Tarski–Mostowski–Robinson arithmetic\markdownRendererSoftLineBreak
{}\markdownRendererInlineMath{R} inside it (Section \markdownRendererLink{\markdownRendererHash{}proof-system}{\markdownRendererHash{}proof-system}{#proof-system}{});\markdownRendererSoftLineBreak
{}we did it partly because we suspect our system to\markdownRendererSoftLineBreak
{}be able to prove its own consistency in the sense\markdownRendererSoftLineBreak
{}Pakhomov finitistic set theory does \markdownRendererCite{1}+{}{}{Pakhomov2019AWeakSet},\markdownRendererSoftLineBreak
{}therefore, interpretation of \markdownRendererInlineMath{R} is the natural\markdownRendererSoftLineBreak
{}first step.\markdownRendererParagraphSeparator
{}\printbibliography{}
\markdownRendererSectionEnd \markdownRendererDocumentEnd
\endgroup
\end{document}